 \gdef\texorpdfstring#1#2{#1}
\NewDocumentCommand\term{so}{\IfValueTF{#2}\term\emph}
\newtheoremstyle{mytheorem}{3pt}{}{\itshape}{}{\bfseries}{\nopagebreak\newline}{.5em}{}%
\newtheoremstyle{mydefinition}{3pt}{}{}{}{\bfseries}{\nopagebreak\newline}{.5em}{}%
\gdef\mytheorem{\theoremstyle{mytheorem}}
\gdef\mydefinition{\theoremstyle{mydefinition}}
\gdef\mytheoremcounter{theorem}
\newtheorem{theorem}{Theorem}[section]
\newtheorem{lemma}[\mytheoremcounter]{Lemma}
\newtheorem{proposition}[\mytheoremcounter]{Proposition}
\newtheorem{assumptions}[\mytheoremcounter]{Assumptions}
\newtheorem{definition}[\mytheoremcounter]{Definition}
\theoremstyle{remark}
\newtheorem{remark}[\mytheoremcounter]{Remark}
\gdef\ii{I}\gdef\ij{J}\gdef\ik{K}\gdef\il{L}
\gdef\oi{i}\gdef\oj{j}\gdef\ok{k}\gdef\ol{l}
\NewDocumentCommand\outsymbol{om}{\overline{\IfValueTF{#1}{#1{#2}}{#2}}}
\NewDocumentCommand\unisymbol{om}{\widehat{\IfValueTF{#1}{#1{#2}}{#2}}}
\NewDocumentCommand\outtensor{d()}{\IndexSymbol(\outsymbol[#1])}
\newdimen\middle@width
\newcommand*\phantomas[3][c]{\ifmmode\makebox[\widthof{$#2$}][#1]{$#3$}\else\makebox[\widthof{#2}][#1]{#3}\fi}
\NewDocumentCommand\tracefree{m}
{\setlength\middle@width{\widthof{\ensuremath{#1}}}
 #1\hskip-\middle@width
 \phantomas[r]{#1}{\vphantom{\ensuremath{#1}}^{\,\!^\circ}}}
\NewDocumentCommand\mean{m}
{\setlength\middle@width{\widthof{\ensuremath{#1}}}
 \phantomas{#1}{{=\!}}
 \hskip-\middle@width#1}
\let\oldPhi\Phi\RenewTensor\Phi\oldPhi
\let\oldPsi\Psi\RenewTensor\Psi<\!>\oldPsi
\NewDocumentCommand\intervalI{s}{\IfBooleanTF{#1}I{\ensuremath{\intervalI*}\xspace}}%
\NewDocumentCommand\intervalJ{s}{\IfBooleanTF{#1}J{\ensuremath{\intervalJ*}\xspace}}%
\gdef\schwarzs{\mathcal S}
\gdef\schwarzoutg{\outg[\schwarzs]}
\gdef\euclideane{e}
\gdef\eukoutg{\outg[\euclideane]}
\gdef\eukouttr{\outtr[\euclideane\,]}
\gdef\eukoutdiv{\outdiv[\euclideane]}
\gdef\eukoutHess{\outHess[\euclideane]}
\gdef\eukoutlaplace{\outlaplace[\euclideane]}
\gdef\eukmug{\mug[\euclideane]}
\gdef\eukzFundtrf{\zFundtrf[\euclideane\hspace{.05em}]}
\gdef\rad{|\outx|}
\gdef\atime{\tau}
\gdef\volume#1{\left|#1\right|}
\NewDocumentCommand\c{sG{c}}{\IfBooleanTF{#1}{#2}{\IndexSymbol*#2}}
\gdef\cSob{\c{\c_S}}
\NewDocumentCommand\Cof{G{C}d<>d()}
{\IfValueTF{#3}{\Cof{#1}<\IfValueTF{#2}{#2,}\relax{#3}>}{\CofSnd{#1\IfValueTF{#2}{[#2]}\relax}}}
\NewDocumentCommand\CofSnd{G{C}d<>o}
{\IfValueTF{#3}{\CofSnd{#1}<\IfValueTF{#2}{#2,}\relax{#3}>}{\mathop{#1\IfValueTF{#2}{(#2)}\relax}}}%
\NewTensor\M[\hspace{-.05em}]<\hspace{-.05em}>\Sigma
\NewTensor*\graphf[\!]f
\NewTensor\sphg{\Omega}
\NewTensor*\rnu[\!]u
\NewTensor*\tnu[\hspace{-.05em}] w
\NewTensor*\lapse[\!]u
\NewTensor\rbeta[\!]\beta
\metric\NewTensor[\newmathcal]\metric[\!\!]<\hspace{.05em}>g
\let\g\metric
\let\outg\outmetric
\let\unig\unimetric
\let\outrc\outriem
\let\ric\ricci
\let\outric\outricci
\let\outsc\outscalar
\let\zFund\k
\let\zFundtrf\ktrf
\NewDocumentCommand\vzFundtrf{d<>od<>}
{{\IfDisplaystyleTF\left\relax|\zFundtrf<#1>[#2]<#3>\IfDisplaystyleTF\right\relax|}}
\let\outzFund\outk
\NewDocumentCommand\troutzFund{D<>{}O{}}{\tr<#1>[#2]\hspace{.05em}\outzFund[#2]}
\NewDocumentCommand\outzFundnu{D<>{}O{}}{\mathop{{\outzFund[#2]_{\nu<#1>[#2]}}}}
\NewDocumentCommand\outzFundnunu{D<>{}O{}}{\mathop{{\outzFund[#2]_{\nu<#1>[#2]\nu<#1>[#2]}}}}
\let\H\mc
\let\oldnu\nu
\NewTensor*\normal[\!]\oldnu
\let\nu\normal
\NewTensor*\laplace[\!]\Delta
\NewTensor\Hess[\!]{\text{Hess}}
\NewTensor\Hesstrf[\!]{\text H\tracefree{\text{es}}\text s}
\NewTensor\div[\!]{\text{div}}
\NewTensor*\tr[\!]{\text{tr}}
\NewTensor*\vol{\text{vol}}
\NewTensor*\mug[\!]\mu
\NewDocumentCommand\jacobit{G{\pm}}{\IndexSymbol*[\hspace{-.05em}]<\hspace{-.05em}>{\textrm L_{#1}^{\!t}}}
\NewDocumentCommand\jacobipm{G{\gewicht}}{\IndexSymbol*[\hspace{-.05em}]<\hspace{-.05em}>{\textrm L_{#1}}}
\NewDocumentCommand\ajacobipm{G{\gewicht}}{\IndexSymbol*[\hspace{-.05em}]<\hspace{-.05em}>{\textrm L_{#1}^{\!\!a}}}
\NewDocumentCommand\trzd{smm}{{#2}\odot{#3}}
\NewDocumentCommand\trtr{smm}
{\ifthenelse{\equal{#2}{#3}}{\left|#2\right|_{\g*}^2}{\tr(\trzd{#2}{#3})}}%
\NewDocumentCommand\outc{G{c}}{\IndexSymbol(\outsymbol)*{#1}}\let\oc\outc
\NewDocumentCommand\outck{d<>o}{\outc<#1>[#2]_{\outzFund*}\vphantom{\outc_{\outzFund*}}}
\gdef\outtr{\IndexSymbol(\outsymbol)[\!]{\text{tr}}}
\NewDocumentCommand\outtrzd{D<>{}O{}mm}
{\mathop{{#3\IndexSymbol(\outsymbol)[#2]<#1>\odot{#4}}}}
\NewDocumentCommand\outtrtr{D<>{}O{}mm}
{\mathop{{\ifthenelse{\equal{#3}{#4}}
 {{\left|#3\right|_{\outg<#1>[#2]}^2}}
 {\mathop{{\outtr<#1>[#2](\outtrzd<#1>[#2]{#3}{#4})}}}}}}
\NewTensor*\eflap[\!]f
\NewTensor*\ewlap[\!]\lambda
\NewTensor*\efjac[\!]{\mathcal f}
\NewTensor*\ewjac[\!]<\!>\gamma
\NewTensor*\funcg[\!]g
\NewTensor*\funch[\!]h
\gdef\trans#1{{#1}^{\text t}}
\gdef\deform#1{{#1}^{\text d}}
\gdef\Hradius{\sigma}
\gdef\Aradius{r}
\gdef\rradius{R}
\NewTensor\mHaw{{\normalfont m_{\text{H}}}}
\NewDocumentCommand\d{s}{\IfBooleanTF{#1}\relax{\mathop{}\!}\mathrm d}%
\newcommand\ds{{\d s}}
\DeclareMathOperator\id{id}
\DeclareMathOperator\graph{graph}
\newcommand\R{\mathds{R}}																						
\newcommand\N{\mathds{N}}																						
\newcommand\X{\mathfrak{X}}																					
\NewDocumentCommand\Lp{t^}{\IfBooleanTF{#1}{\Lpsnd{^}}{\Lpsnd\relax\relax}}
\newcommand\Lpsnd[2]{{\normalfont\textrm L#1{#2}}}
\newcommand\Wkp{{\normalfont\textrm W}}
\newcommand\Hk{{\normalfont\textrm H}}
\newcommand\Ck{\mathcal C}
\newcommand\sphere{\mathds S}																				
\newcommand\ve{\varepsilon}																					
\DeclareMathAlphabet{\mathcal}{OT1}{pzc}{m}{n}
\let\newmathcal\mathcal
\title[Foliations by stable spheres with constant mean curvature]{Foliations by stable spheres with\\constant mean curvature for isolated\\systems without asymptotic symmetry}
 \author[Christopher Nerz]{Christopher Nerz}
 \address{Institutionen f\"or matematik \\Kungliga Tekniska h\"ogskolan \\Stockholm \\ Sverige}
 \email{ncroman@kth.se}
 \date\today
\begin{document}
\begin{abstract}
In 1996, Huisken-Yau showed that every three-dimensional Riemannian manifold can be uniquely foliated near infinity by stable closed CMC-surfaces if it is asymptotically equal to the (spatial) Schwarz\-schild solution and has positive mass. Their assumptions were later weakened by Metzger, Huang, Eichmair-Metzger and others. We further generalize these existence results in dimension three by proving that it is sufficient to assume asymptotic flatness and non-vanishing mass to conclude the existence and uniqueness of the CMC-foliation and explain why this seems to be the conceptually optimal result. Furthermore, we generalize the characterization of the corresponding coordinate CMC-center of mass by the ADM-center of mass proven previously by Corvino-Wu, Huang, Eichmair-Metzger and others (under other assumptions).
\end{abstract}
\maketitle
\let\sc\scalar%
\section*{Introduction}
In order to study the quasi-local mass of asymptotically flat manifolds, Christo\-doulou-Yau used surfaces of constant mean curvature (CMC) \cite{christodoulou71some}. Since then, CMC-surfaces have proven to be a useful tool for mathematical general relativity. It was first proven by Huisken-Yau in 1996 that every three-dimensional Riemannian manifold $(\outM,\outg*,\outx)$ can be uniquely foliated near infinity by closed CMC-surfaces if it is asymptotic to the (spatial) Schwarzschild solution \cite{huisken_yau_foliation}. Besides proving this existence and uniqueness result, they showed that this foliation can be used as a definition of the center of mass.
Here, being asymptotic to (spatial) Schwarzschild solution means that there exists a coordinate system $\outx:\outM\setminus\outsymbol L\to\R^3\setminus \overline{B_1(0)}$ mapping the manifold (outside some compact set $\outsymbol L$) to Euclidean space, such that the push-forward of the metric $\outg*$ is asymptotically equal to the Schwarzschild metric $\schwarzoutg*$ as $\vert\outx\vert\to\infty$. Huisken-Yau assumed that the $k$-th derivatives of the difference $\outg_{ij}-\schwarzoutg_{ij}$ of the metric $\outg*$ and the Schwarzschild metric $\schwarzoutg*$ decay like $\vert\outx\vert^{-2-k}$ in these coordinates for every $k\le4$. This is abbreviated by writing $\outg*-\schwarzoutg*=\mathcal O_4(\vert x\vert^ {-2})$.

Later, Metzger proved the same result, but weakened their decay assumptions to $\outg*-\schwarzoutg*=\mathcal O_2(\vert x\vert^ {-1-\outve})$ for $\ve\ge0$, i.\,e.\ he only had to assume decay of the difference between the metrics, the corresponding Christoffel symbols and the corresponding curvatures and additionally he reduced the assumed decay rate \cite{metzger2007foliations}.%
\footnote{Note that he allowed $\outve=0$ if the constants of the corresponding inequalities are sufficiently small. This is a very interesting, particular result as he does not assume that the scalar curvature is integrable.}
However, this means that he still assumed that the metric is rotationally symmetric up to order $\vert x\vert^{-1-\outve}$. This symmetry assumption was weakened by Huang who proved that it is sufficient that the metric is asymptotic to the Euclidean one (\term{asymptotically flat}) with $\outg*-\eukoutg*=\mathcal O_5(\vert x\vert^{-\frac12-\outve})$, the scalar curvature decays with $\outsc=\mathcal O_0(\vert x\vert^{-3-\outve})$, and the mass is not zero if additionally metric and scalar curvature are (asymptotically) invariant under reflection at the coordinate origin (\term*{Regge-Teitelboim condition} \cite{regge1974role}, see Definition \ref{Regge-Teitelboim_conditions}) \cite{Huang__Foliations_by_Stable_Spheres_with_Constant_Mean_Curvature}. Furthermore, the corresponding result was proven by Eichmair-Metzger in dimensions greater than three if the metric is asymptotic to the Schwarzschild metric \cite{metzger_eichmair_2012_unique}. \pagebreak[3]

Under her assumptions, Huang additionally proves that the CMC-center of mass coincides with the (ADM-)center of mass\vspace{-.5em}
\[ \frac1{16\pi m}\lim_{\rradius\to\infty}\int_{\sphere^2_\rradius(0)} \sum_{j=1}^3 (\outx^i(\partial[\outx]_\oj@{\outg_{\oj\ok}}-\partial[\outx]_\ok@{\outg_{\oj\oj}})\outx^\ok-(\outg_\oj^\oi\hspace{.05em}\frac{\outx^\oj}\rradius-\outg_{\oj\oj}\hspace{.05em}\frac{\outx^\oi}\rradius)) \d\eukmug \vspace{-.5em}\]
defined by Regge-Teitel\-boim \cite{regge1974role} and Beig-{\'O}\,Murchadha \cite{beig1987poincare}. We use the name \lq ADM-center of mass\rq\ as this definition is similar to Arnowitt-Deser-Misner's definitions of mass and linear momentum \cite{arnowitt1961coordinate}. The same result was previously proven by Corvino-Wu and later by Eichmair-Metzger (under different assumptions) \cite{Corvino__On_the_center_of_mass_of_isolated_systems,metzger_eichmair_2012_unique}. It was proven by Cederbaum and the author that this results does not hold if the Regge-Teitel\-boim conditions are not satisfied \cite{cederbaumnerz2013_examples}. \pagebreak[2]%

Note that the CMC-foliation is not the only foliation used in mathematical general relativity. For example, Metzger proved existence and uniqueness of a foliation by spheres of constant expansion \cite{metzger2007foliations}, Lamm-Metzger-Schulze proved a corresponding result for spheres of Willmore type \cite{lamm2011foliationsbywillmore}, and (in the static case) Cederbaum proved that the level-sets of the static lapse function form a unique foliation \cite{Cederbaum_newtonian_limit}.\pagebreak[3]\smallskip

In this paper, we generalize the above results for the CMC-foliation in dimension three by proving that it exists (Theorem \ref{Existence_of_a_CMC-foliation}) and is unique (Theorem \ref{Uniqueness_of_the_CMC_surfaces}) if the metric is asymptotically flat with asymptotically vanishing scalar curvature and non-vanishing mass $\mass$, i.\,e. $\outg*=\eukoutg*+\mathcal O_2(\vert x\vert^{{-}\frac12-\ve})$, $\outsc=\mathcal O(\vert x\vert^{{-}3-\outve})$, and $\mass\neq0$. To the best knowledge of the author, this is the first time that existence (and uniqueness) of the CMC-foliation could be proven without assuming any (asymptotic) symmetry condition on the metric (and the scalar curvature). It should be noted that these decay assumptions are the pointwise version of the (Sobolev-)decay assumptions ($\outg*-\eukoutg*\in\Wkp^{2,p}_{\nicefrac12}$ with $\outsc\in\Lp^1$) made by Bartnik in order to prove that the ADM-mass is well-defined \cite{bartnik1986mass} and that Bartnik's decay assumptions are optimal \cite{denisov1983energy}. It is therefore reasonable to presume that these decay assumptions cannot be weakened -- except by replacing the pointwise by the corresponding Sobolev decay assumptions. Note that the proof presented here also works under the decay assumption $\outg*-\eukoutg*\in\Wkp^{3,p}_{\nicefrac12}$ with $\outsc\in\Lp^1$ and $p>2$ (see Section~\ref{Weaker_decay_assumptions}).

Additionally, we prove that the leaves of this foliation are stable if and only if the mass $\mass$ is positive. More precisely, we prove that the first three eigenvalues of the stability operator of such a leaf are equal to $\frac34\,\mass\,|\H|^3$ (at highest order) (Theorem \ref{Stability_of_the_foliation}) and that the corresponding eigenfunction correlate to translations (Propositions \ref{Regularity_of_surfaces_in_asymptotically_flat_spaces} and \ref{Movement_of_the_spheres_by_the_lapse_function}), where $\H$ and $\mHaw$ denote the mean curvature and the Hawking mass of the leaf, correspondingly. Analogous results were also proven by Huisken-Yau, Metzger, Huang, and Eichmair-Metzger under the corresponding decay assumptions explained above \cite{huisken_yau_foliation,metzger2007foliations,Huang__Foliations_by_Stable_Spheres_with_Constant_Mean_Curvature,metzger_eichmair_2012_unique}.

Furthermore, we prove that the CMC-center of mass exists and is equal to the ADM-center of mass if we additionally assume that the Regge-Teitel\-boim conditions are satisfied (Theorem \ref{The_centers_of_mass}). More precisly, we prove that this equality also holds if we only assume a weaker form of the Regge-Teitel\-boim conditions. However, as the CMC-center of mass does not need to be well-defined under these assumptions \cite{cederbaumnerz2013_examples}, the latter is true in the sense that the CMC-center of mass is well-defined if and only if the ADM-center of mass is well-defined, and in that case these centers coincide. This generalizes the results cited above (in dimension three) and \cite[Cor.~5.3]{nerz2013timeevolutionofCMC}.\pagebreak[3]\smallskip

\textbf{Acknowledgment.}
The author wishes to express gratitude to Gerhard Huisken for suggesting this topic, many inspiring discussions and ongoing supervision. Further thanks is owed to Lan-Hsuan Huang for suggesting the use of the Bochner-Lichnerowicz formula in this setting  -- a central step in the argument (Lemma \ref{Eigenvalues_of_the_Laplacian}). Finally, this paper would not have attained its current form and clarity without the useful suggestions by Carla Cederbaum.\pagebreak[3]

\section*{Structure of the paper}
In Section \ref{Assumptions_and_notation}, we explain the basic notations and definitions. We give the main regularity arguments in Section \ref{Regularity_of_the_hypersurfaces}. Existence and uniqueness of the CMC-foliation as well as the stability of its leaves are proven in Section \ref{existence_of_the_CMC-foliation}. Finally in Section \ref{the_centers_of_mass}, we give the definitions of ADM- and CMC-center of mass and prove that these are equivalent under a weak form of the Regge-Teitel\-boim conditions.

\section{Assumptions and notation}\label{Assumptions_and_notation}
In order to study foliations (near infinity) of three-dimensional Riemannian manifolds by two-dimensional spheres, we will have to deal with different manifolds (of different or the same dimension) and different metrics on these manifolds, simultaneously. To distinguish between them, all three-dimensional quantities like the surrounding manifold $(\outM,\outg*)$, its Ricci and scalar curvature $\outric$ and $\outsc$ and all other derived quantities carry a bar, while all two-dimensional quantities like the CMC leaf $(\M,\g*)$, its second fundamental form $\zFund*$, the trace-free part of its second fundamental form $\zFundtrf:=\zFund*-\frac12\,(\tr\zFund*)\g*$, its Ricci, scalar, and mean curvature $\ric$, $\sc$, and $\H:=\tr\zFund$, its outer unit normal $\nu$, and all other derived quantities do not.

Here, we interpret the second fundamental form and the normal vector of a hypersurface as quantities of the surface (and thus as two-dimensional). For example, if $\M<\Hradius>$ is a hypersurface in $\outM$, then $\nu<\Hradius>$ denotes its normal (and \emph{not} $\IndexSymbol{\outsymbol\nu}<\Hradius>$). The same is true for the \lq lapse function\rq\ and the \lq shift vector\rq\ of a hypersurfaces arising as a leaf of a given deformation or foliation. Furthermore, we stress that the sign convention used for the second fundamental form results in a \emph{negative} mean curvature of the Euclidean coordinate spheres.%

If different two-dimensional manifolds or metrics are involved, then the lower left index will always denote the mean curvature index $\Hradius$ of the current leaf $\M<\Hradius>$, i.\,e.\ the leaf with mean curvature $\H<\Hradius>\equiv\nicefrac{{-}2}\Hradius$. Furthermore, quantities carry the upper left index $\euclideane$ and $\sphg*$ if they are calculated with respect to the Euclidean metric $\eukoutg*$ and the standard metric $\sphg<\Hradius>$ of the Euclidean sphere $\sphere^2_\Hradius(0)$, correspondingly. We abuse notation and suppress the index $\Hradius$, whenever it is clear from the context which metric we refer to.%

Finally, we use upper case latin indices $\ii$, $\ij$, $\ik$, and $\il$ for the two-dimensional range $\lbrace2,3\rbrace$ and lower case latin indices $\oi$, $\oj$, $\ok$, and $\ol$ for the three-dimensional range $\lbrace 1,2,3\rbrace$. The Einstein summation convention is used accordingly.
\pagebreak[3]\smallskip

As there are different definitions of \lq asymptotically flat\rq\ in the literature, we now give the decay assumptions used in this paper. 
\begin{definition}[\texorpdfstring{$\Ck^2_{\frac12+\ve}$}{C-k}-asymptotically flat Riemannian manifolds]\label{Ck_asymptotically_flat}
Let $\outve\relax>0$. A triple $(\outM,\outg,\outx)$ is called \term{$\Ck^2_{\frac12+\ve}$-asymptotically flat} Riemannian manifold if $(\outM,\outg)$ is a smooth manifold and $\outx:\outM\setminus\overline L\to\R^3$ is a smooth chart of $\outM$ outside a compact set $\overline L\subseteq\outM$ such that there exists a constant $\oc\ge0$ with\vspace{-.25em}
\begin{equation*} 
 \vert\outg_{ij}-\eukoutg_{ij}\vert + \rad\,\vert\outlevi_{ij}^k \vert + \rad^2\,\vert\outric_{ij} \vert + \rad^{\frac52}\,\vert\outsc\vert \le \frac{\oc}{\rad^{\frac12+\outve}},\vspace{-.25em}
 \labeleq{Decay_assumptions_g}\end{equation*}
where  $\eukoutg*$ denotes the Euclidean metric.\pagebreak[1] Here, these quantities are identified with their push-forward along $\outx$.
Arnowitt-Deser-Misner defined the \term{{\normalfont(\hspace{-.05em}}ADM-{\normalfont\hspace{.05em})}mass} of a $\Ck^2_{\frac12+\outve}$-asymptotically Riemannian manifold $(\outM,\outg*,\outx)$ by
\begin{equation*} \mass_{\text{ADM}} := \lim_{\rradius\to\infty} \frac1{16\pi}\sum_{\oj=1}^3\int_{\sphere^2_\rradius(0)}(\partial[\outx]_\oj@{\g_{\oi\oj}}-\partial[\outx]_\oi@{\g_{\oj\oj}})\,\nu^\oi \d\mug  \label{Definition_of_mass_ADM}, \end{equation*}
where $\nu$ and $\mug$ denote the outer unit normal and the area measure of $\M\hookrightarrow(\outM,\outg*)$ \cite{arnowitt1961coordinate}.
\end{definition}%
In the literature, the ADM-mass is also characterized using the curvature of $\outg*$:
\begin{equation} \mass := \lim_{\rradius\to\infty} \frac{{-}\rradius}{8\pi} \int_{\sphere^2_\rradius(0)} \outric*(\nu<\rradius>,\nu<\rradius>) - \frac\outsc2 \d\mug<\rradius>, \labeleq{Definition_of_mass}\end{equation}%
see the articles of Ashtekar-Hansen, Chru\'sciel, and Schoen \cite{ashtekar1978unified,schoen1988existence,chrusciel1986remark}.
Miao-Tam recently gave a proof of the characterization $m_{\text{ADM}}=\mass$ in the setting used within this paper, i.\,e.\ for any $\Ck^2_{\frac12+\outve}$-asymptotically flat manifold \cite{miao2013evaluation}.\footnote{The author thank Carla Cederbaum for bringing his attention to Miao-Tam's article \cite{miao2013evaluation}.} We recall further aspects of this total mass in Appendix~\ref{ricci_integrals_and_the_mass}.
\begin{remark}[Alternative decay assumptions]
In Section~\ref{Weaker_decay_assumptions}, we discuss slightly different assumptions than the one stated above. In particular, we can replace the pointwise assumption by Sobolev assumptions and can alter the assumptions on $\outsc$.
\end{remark}
Let us recall the Hawking mass \cite{hawking2003gravitational}.
\begin{definition}[Hawking mass]
Let $(\outM,\outg*)$ be a three-dimensional Riemannian manifold. For any closed hypersurface $\M\hookrightarrow\outM$ the \term{Hawking mass} is defined by
\[ \mHaw(\M) := \sqrt{\frac{\volume{\M}}{16\pi}}(1-\frac1{16\pi}\int\H^2\d\mug), \]
where $\H$ and $\mug$ denote the mean curvature and measure induced on $\M$, respectively.
\end{definition}%
It is well-known that
\begin{equation*} \mass = \lim_{\rradius\to\infty} \mHaw(\sphere^2_\rradius(0)) \tag{\ref{Definition_of_mass}'}, \end{equation*}
see for example \cite{schoen1988existence}. We recall and explain this in Appendix \ref{ricci_integrals_and_the_mass} in more detail.\pagebreak[3]\smallskip

As mentioned, we frequently use foliations. We will in the following characterize them infinitesimally by their lapse functions and their shift vectors.
\begin{definition}[Lapse functions, shift vectors]
Let $\theta>0$ and $\Hradius_0\in\R$ be constants, $I\supseteq\interval{\Hradius_0-\theta\Hradius}{\Hradius_0+\theta\Hradius}$ be an interval, and $(\outM,\outg*)$ be a Riemannian manifold. A smooth map $\Phi:I\times\M\to\outM$ is called \term{deformation} of the closed hypersurface $\M=\M<\Hradius_0>=\Phi(\Hradius_0,\M)\subseteq\outM$ if $\Phi<\Hradius>(\cdot):=\Phi(\Hradius,\cdot)$ is a diffeomorphism onto its image $\M<\Hradius>:=\Phi<\Hradius>(\M)$ and $\Phi<\Hradius_0>\equiv\id_{\M}$. The decomposition of $\spartial*_\Hradius\Phi$ into its normal and tangential parts can be written as
\[ \partial*_\Hradius\Phi = {\rnu<\Hradius>}\,{\nu<\Hradius>} + {\rbeta<\Hradius>}, \]
where $\nu<\Hradius>$ is the outer unit normal to $\M<\Hradius>$. The function $\rnu<\Hradius>:\M<\Hradius>\to\R$ is called \term{lapse function} and the vector field $\rbeta<\Hradius>\in\X(\M<\Hradius>)$ is called \term{shift} of $\Phi$. If $\Phi$ is a diffeomorphism (resp.\ diffeomorphism onto its image), then it is called a \term{foliation} (resp.\ a \term*[foliation!local]{local foliation}).\pagebreak[3]
\end{definition}
For notation convenience, we use the following abbreviated form for the contraction of two tensor fields.
\begin{definition}[Tensor contraction]
Let $(\M,\g*)$ be a Riemannian manifold. The \term{traced tensor product} of a $(0,k)$ tensor field $S$ and a $(0,l)$ tensor field $T$ on $(\M,\g*)$ with $k,l>0$ is defined by
\[ (\trzd ST)_{I_1\dots I_{k-1}\!J_1\dots J_{l-1}} := S_{I_1\dots I_{k-1} K}\,T_{LJ_1\dots J_{l-1}}\,\g^{KL}. \]
This definition is independent of the chosen coordinates. Furthermore, $\trzd{\trzd ST}U$ is well-defined if $T$ is a $(0,k)$ tensor field with $k\ge2$, i.\,e.\ $\trzd{(\trzd ST)}U=\trzd S{(\trzd TU)}$ for such a $T$.
\end{definition}
Finally, we specify the definitions of Lebesgue and Sobolev norms on compact Riemannian manifolds which we will use throughout this article.
\begin{definition}[Lesbesgue and Sobolev norms]
If $(\M,\g*)$ is a closed Riemannian manifold, then the \term{Lebesgue norms} are  defined by
\[ \Vert T\Vert_{\Lp^p(\M)} := (\int_{\M} \vert T\vert_{\g*}^p \d\mug)^{\frac1p}\quad\forall\,p\in\interval*1\infty, \qquad \Vert T\Vert_{\Lp^\infty(\M)} := \mathop{\text{ess\,sup}}\limits_{\M} \vert T\vert_{\g*}, \]
where $T$ is any measurable function (or tensor field) on $\M$.\pagebreak[2] Correspondingly, $\Lp^p(\M)$ is defined to be the set of all measurable functions (or tensor fields) on $\M$ for which the $\Lp^p$-norm is finite. If $\Aradius:=(\nicefrac{\volume{\M}}{\omega_n})^{\nicefrac1n}$ denotes the \term{area radius} of $\M$, where $n$ is the dimension of $\M$ and $\omega_n$ denotes the Euclidean surface area of the $n$-dimensional unit sphere, then the \term{Sobolev norms} are defined by
\[ \Vert T\Vert_{\Wkp^{k+1,p}(\M)} := \Vert T\Vert_{\Lp^p(\M)} + \Aradius\,\Vert\levi*T\Vert_{\Wkp^{k,p}(\M)}, \qquad \Vert T\Vert_{\Wkp^{0,p}(\M)} := \Vert T\Vert_{\Lp^p(\M)}, \]
where $k\in\N_{\ge0}$, $p\in\interval*1*\infty$ and $T$ is any measurable function (or tensor field) on $\M$ for which the $k$-th (weak) derivative exists. Correspondingly, $\Wkp^{k,p}(\M)$ is the set of all such functions (or tensors fields) for which the $\Wkp^{k,p}(\M)$-norm is finite. Furthermore, $\Hk^k(\M)$ denotes $\Wkp^{k,2}(\M)$ for any $k\ge1$ and $\Hk(\M):=\Hk^1(\M)$.
\end{definition}
\section{Regularity of the hypersurfaces}\label{Regularity_of_the_hypersurfaces}
In this section, we prove the main regularity results for the hypersurfaces which we study in the following sections. The following bootstrap argument for surfaces with small trace-free part of the second fundamental form is central for the following argument. Note that Metzger used a similar approach to conclude this decay rate of the trace-free part of the second fundamental form \cite[Prop.~3.3]{metzger2007foliations}.
\begin{proposition}[Bootstrap for the second fundamental form]\label{Bootstrap_for_trace_free_second_fundamental_form}
Let $(\M,\g*)$ be a closed hypersurface of a three-dimensional Riemannian manifold $(\outM,\outg*)$. Assume that there are constants $\kappa>1$, $\c_1$, $\c_2$, $\eta>0$, $p>2$, $\Aradius=\sqrt{\nicefrac{\volume{\M}}{4\pi}}$ and $\mean\H=\mean\H(\M)\in\R$ such that
\begin{align*}
 \Vert \vert\outric*(\nu,\cdot)\vert_{\g*} \Vert_{\Lp^p(\M)} \le{}& \frac{\c_1}{\Aradius^{\kappa+1-\frac2p}}, &
 \Vert \vert\outric*\vert_{\g*} \Vert_{\Lp^p(\M)} \le{}& \frac{\c_2}{\Aradius^{2+\eta-\frac2p}},  \\
 \Vert \H - \mean\H \Vert_{\Wkp^{1,p}(\M)} \le{}& \frac{\c_1}{\Aradius^{\kappa-\frac2p}}, &
 \vert \mean\H + \frac2\Aradius \vert \le{}& \frac{\c_2}{\Aradius^{1+\eta}},
\end{align*}
where $\nu$ is a unit normal of $\M\hookrightarrow(\outM,\outg*)$ and assume furthermore that the first Sobolev inequality holds on $\M$, i.\,e.\ there is a constant $\cSob<\infty$ such that
\[ \Vert f\Vert_{\Lp^2(\M)} \le \frac\cSob\Aradius\,\Vert f\Vert_{\Wkp^{1,1}(\M)} \qquad\forall\,f\in\Wkp^{1,1}(\M). \]
Then there are constants $\Aradius_1=\Cof{\Aradius_1}[\kappa][\eta][\c_1][\c_2][\cSob][p]$ and $C=\Cof[\kappa][\eta][\c_2][\cSob][p]$ such that the implication
\[ \Vert\hspace{.05em}\zFundtrf*\Vert_{\Lp^2(\M)} \le \frac2{9\,\cSob} \qquad\Longrightarrow\qquad
	\Vert\hspace{.05em}\zFundtrf\Vert_{\Lp^\infty(\M)} + \Aradius^{-1}\,\Vert\hspace{.05em}\zFundtrf*\Vert_{\Hk(\M)}\le \frac{\c_1\,C}{\Aradius^\kappa} \]
holds if $\Aradius\ge\Aradius_1$.
\end{proposition}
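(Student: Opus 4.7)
\emph{Proof sketch.}
The strategy is to derive an elliptic PDE for $\zFundtrf$ from Simons' identity on $(\M,\g*)$, test against $\zFundtrf$ to obtain an energy inequality whose cubic self-interaction can be absorbed thanks to the smallness assumption $\Vert\zFundtrf*\Vert_{\Lp^2(\M)}\le\tfrac{2}{9\cSob}$, and then bootstrap from $\Hk(\M)$ to $\Lp^\infty(\M)$ by a Moser-type iteration.

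Starting from the Simons identity combined with Codazzi on a two-surface inside a three-manifold, one obtains an identity of the schematic form
\[ \laplace\,\zFundtrf = \Hess\,\H + \outric*\ast\zFund* + \zFundtrf*\ast\bigl(\vert\zFundtrf\vert^2 + \H^2\bigr), \]
where $\ast$ denotes contractions with $\g*$. Splitting $\H = \mean\H + (\H-\mean\H)$ together with $\mean\H = -\tfrac{2}{\Aradius} + O(\Aradius^{-1-\eta})$ separates the ``fast'' forcing, controlled by $\c_1$ through $\H-\mean\H$ and $\outric*(\nu,\cdot)$, from the ``background'' (controlled only by $\c_2$, $\Aradius$, and $\zFundtrf$ itself). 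Testing this PDE against $\zFundtrf*$, integrating by parts, and using Codazzi to trade $\levi\,\H$ for $\outric*(\nu,\cdot)+\levi*\zFundtrf*$, yields an energy inequality whose only genuinely dangerous right-hand term is cubic in $\zFundtrf$. Applying the assumed Sobolev inequality to $\vert\zFundtrf\vert^2$ gives a Gagliardo--Nirenberg-type estimate
\[ \Vert\zFundtrf*\Vert_{\Lp^4(\M)}^2 \le \cSob\,\Vert\zFundtrf*\Vert_{\Lp^2(\M)}\,\bigl(\Aradius^{-1}\Vert\zFundtrf*\Vert_{\Lp^2(\M)} + 2\,\Vert\levi*\zFundtrf*\Vert_{\Lp^2(\M)}\bigr), \]
so the cubic contribution becomes subdominant precisely when $\Vert\zFundtrf*\Vert_{\Lp^2(\M)}\le\tfrac{2}{9\cSob}$ and can be absorbed into the gradient term on the left, yielding
\[ \Vert\zFundtrf*\Vert_{\Lp^2(\M)} + \Aradius\,\Vert\levi*\zFundtrf*\Vert_{\Lp^2(\M)} \le \c_1\,C\,\Aradius^{1-\kappa}. \]

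To upgrade from $\Hk(\M)$ to $\Lp^\infty(\M)$, I would iterate the same testing procedure against $\vert\zFundtrf\vert^{q-2}\zFundtrf$ for a geometric sequence $q_n\to\infty$, using H\"older and the (non-scale-invariant) Sobolev inequality to convert each $\Lp^{q_n}$ bound into an $\Lp^{q_{n+1}}$ bound with constants depending only on $\kappa$, $\eta$, $\c_2$, $\cSob$, and $p$; alternatively, since $p>2$ and $\dim\M=2$, a single test at exponent $p$ gives $\Wkp^{1,p}(\M)$-control which embeds into $C^0(\M)$ by Sobolev. In either case, the $\Lp^2$-smallness of $\zFundtrf$ keeps the cubic self-interaction subdominant at every stage and produces the claimed bound
\[ \Vert\zFundtrf\Vert_{\Lp^\infty(\M)} + \Aradius^{-1}\,\Vert\zFundtrf*\Vert_{\Hk(\M)} \le \c_1\,C\,\Aradius^{-\kappa}. \]
The main technical difficulty is exactly this uniform absorption of the cubic self-interaction across every level of the iteration, for which the quantitative threshold $\tfrac{2}{9\cSob}$ is essential; a secondary bookkeeping task is to track the powers of $\Aradius$ carefully through the non-scale-invariant Sobolev inequality so that the final decay exponent comes out as $-\kappa$ rather than some weaker power.
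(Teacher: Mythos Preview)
Your argument for the $\Hk(\M)$-bound is essentially the paper's: Simons' identity, integration by parts against $\zFundtrf$, the Sobolev estimate $\Vert\zFundtrf\Vert_{\Lp^4}^2\le \cSob\,\Vert\zFundtrf\Vert_{\Lp^2}(2\Vert\levi\zFundtrf\Vert_{\Lp^2}+\Aradius^{-1}\Vert\zFundtrf\Vert_{\Lp^2})$, and absorption of the cubic term via the threshold $\tfrac{2}{9\cSob}$.

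Where you diverge is the upgrade to $\Lp^\infty$. You propose Moser iteration (or a direct $\Wkp^{1,p}$ test), which requires re-running the absorption argument at every exponent and keeping track of the constants. The paper instead exploits the $\Hk$-bound already obtained: once $\Vert\zFundtrf\Vert_{\Hk(\M)}\le \c_1 C\Aradius^{1-\kappa}$, every nonlinear term in Simons' identity is already quantitatively small, so pairing $\laplace\zFundtrf$ against an arbitrary $T\in\Wkp^{1,q}(\M)$ (with $p^{-1}+q^{-1}=1$) and integrating the $\Hess\H$ and ambient-curvature terms by parts gives $\Vert\laplace\zFundtrf\Vert_{\Wkp^{-1,p}(\M)}\le \c_1 C\Aradius^{-\kappa-2+2/p}$ directly, with no further absorption needed. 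Regularity of the weak Laplacian then yields $\Wkp^{1,p}\hookrightarrow\Lp^\infty$ in one step. This duality route is shorter and sidesteps the bookkeeping you flag as the main difficulty; your Moser iteration would also work but is heavier machinery than required here.
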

Note that we do not assume that $(\M,\g*)$ is a hypersurface in an asymptotically flat Riemannian manifold, but only assume smallness of the three-dimensional Ricci curvature on $\M$.
\begin{proof}
Let us recall the Simons-identity for the Laplacian of the second fundamental form \cite{simons1968minimal,schoen1975curvature}
\begin{align*}\labeleq{Simons-identity}
 \laplace\zFund*
	={}& \Hess \H - \levi*\outric_{\nu} + \div_2\outrc_{\cdot\cdot\cdot\nu} + \frac{\H^2}2\zFundtrf + \H\,\trzd\zFundtrf\zFundtrf - \trtr\zFundtrf\zFundtrf\,\zFund* \\
		& + \trzd{(\tr_{23}\outrc)}\zFundtrf - \outrc_{\cdot I\!J\cdot}\,\zFundtrf^{I\!J},
\end{align*}
where $(\tr_{23}\outrc)_{I\!J}:=\outrc_{I\!K}^K_{\!J}$ denotes the trace of the three-dimensional Riemannian curvature with respect to the second and third component and $\div_2\outrc_{\cdot\cdot\cdot\nu}$ denotes the divergence of $\outrc*(\cdot,\cdot,\cdot,\nu)$ with respect to the second component -- both calculated with respect to the metric $\g*$.
By integration by parts of $\int\trtr{\laplace\zFundtrf}\zFundtrf\d\mug$, this implies
\[
 \Vert\levi*\zFundtrf\Vert_{\Lp^2(\M)}^2\hspace{-.05em}
	\le\hspace{-.05em} \frac{\c_1\hspace{-.05em}C}{\Aradius^\kappa}\Vert\levi*\zFundtrf\Vert_{\Lp^2(\M)} \hspace{-.05em}-\hspace{-.1em} \int\frac{\H^2}2\trtr\zFundtrf\zFundtrf + \H\hspace{.05em}\tr(\trzd{\trzd\zFundtrf\zFundtrf}\zFundtrf) - \vert\hspace{.05em}\zFundtrf\vert_{\g*}^4\d\mug + \frac{\c_1}{\Aradius^\kappa}\Vert\hspace{.05em}\zFundtrf\Vert_{\Lp^4(\M)}^2\hspace{-.025em},
\]
where we used that $\dim\outM=3$ implies that the Riemannian curvature of $\outM$ is given by combinations of the Ricci curvature. Using the assumptions on $\H$, this implies
\begin{align*}
 \Vert\levi*\zFundtrf\Vert_{\Lp^2(\M)}^2
	\le{}& \frac{\c_1^2\,C^2}{\Aradius^{2\kappa}} - \frac{\mean\H^2}2\Vert\hspace{.05em}\zFundtrf\Vert_{\Lp^2(\M)}^2 + \frac12\Vert\H^2-\mean\H^2\Vert_{\Lp^2(\M)}\,\Vert\hspace{.05em}\zFundtrf\Vert_{\Lp^4(\M)}^2 \\
		& + \vert\mean\H\vert \Vert\hspace{.05em}\zFundtrf\Vert_{\Lp^4(\M)}^2\,\Vert\hspace{.05em}\zFundtrf\Vert_{\Lp^2(\M)} + \Vert\hspace{.05em}\zFundtrf\Vert_{\Lp^4(\M)}^3\,\Vert \H-\mean\H\Vert_{\Lp^4(\M)} \\
		& + (1+\delta)\,\Vert\hspace{.05em}\zFundtrf\Vert_{\Lp^4(\M)}^4
\end{align*}
for every $\delta>0$ \pagebreak[1]and a constant $C$ additionally depending on $\delta$ (and the above constants $\kappa$, $\eta$, $\c_1$, $\c_2$, $\cSob$, and $p$). As a simple calculation proves that the validity of the first Sobolev inequality implies that the other Sobolev inequalities also hold, i.\,e.
\[ \Vert f\Vert_{\Lp^{\frac{2p}{2-p}}(\M)} \le \frac{\cSob\,p}{(2-p)\,\Aradius}\,\Vert f\Vert_{\Wkp^{1,p}(\M)} \qquad\forall\,f\in\Wkp^{1,p}(\M),\;p\in\interval*12, \]
we see that the assumptions imply
\[ \frac3{2\Aradius^2} \le \mean\H^2 + \frac C {\Aradius^{2\kappa}}, \qquad
	 \Vert \H^2-\mean\H^2 \Vert_{\Lp^2(\M)} \le \frac{\c_1\,C}{\Aradius^\kappa}, \qquad
	 \Vert \H - \mean\H \Vert_{\Lp^4(\M)} \le \frac{\c_1\,C}{\Aradius^{\kappa-\frac12}}. \]
Thus, we conclude that for sufficiently large $\Aradius$
\[ \Vert\levi*\zFundtrf\Vert_{\Lp^2(\M)}^2 \le \frac{\c_1^2\,C^2}{\Aradius^{2\kappa}} - \frac1{4\Aradius^2}\Vert\hspace{.05em}\zFundtrf\Vert_{\Lp^2(\M)}^2 + \frac52\Vert\hspace{.05em}\zFundtrf\Vert_{\Lp^4(\M)}^4 \]
due to $\kappa>1$. By the Sobolev inequality, we know
\[ \Vert\hspace{.05em}\zFundtrf\Vert_{\Lp^4(\M)}^2 = \Vert \trtr\zFundtrf\zFundtrf\Vert_{\Lp^2(\M)} \le \c_S(2\Vert\levi*\zFundtrf\Vert_{\Lp^2(\M)} + \frac1\Aradius\Vert\hspace{.05em}\zFundtrf\Vert_{\Lp^2(\M)})\,\Vert\hspace{.05em}\zFundtrf\Vert_{\Lp^2(\M)},  \]
where we used that $\vert\levi*\vert\hspace{.05em}\zFundtrf\vert^2\vert = 2 \vert\hspace{.05em}\zFundtrf\vert\,\vert\levi*\zFundtrf\vert$ $\mug$-almost everywhere. Combining the last two inequalities, we get the claimed $\Hk(\M)$-inequality.\pagebreak[1] To conclude the $\Lp^\infty$-decay, we see for $q$ with $p^{-1}+q^{-1}=1$ and any $(0,2)$-tensor field $T\in\X(\M)$ with $\Vert T\Vert_{\Wkp^{1,q}(\M)}\le1$ the Simons identity implies
\[ \vert\int \trtr T{\laplace\zFundtrf} \d\mug \vert \le \frac{\c_1\,C}{\Aradius^{\kappa+2-\frac2p}} + \frac C{\Aradius^{3-\frac2p}}(\Vert\hspace{.05em}\zFundtrf\Vert_{\Hk(\M)}+\Vert\hspace{.05em}\zFundtrf\Vert_{\Hk(\M)}^2+\Vert\hspace{.05em}\zFundtrf\Vert_{\Hk(\M)}^3).\pagebreak[2] \]
Using the above inequality for $\Vert\hspace{.05em}\zFundtrf\Vert_{\Hk(\M)}$, we conclude
\[ \Vert\laplace\zFundtrf\Vert_{\Wkp^{-1,q}(\M)} \le \frac{\c_1\,C}{\Aradius^{\kappa+2-\frac2p}}, \]
where $\Wkp^{-1,q}(\M)$ denotes the dual space of $\Wkp^{1,q}(\M)$. Therefore, the regularity of the weak Laplacian implies that the $\Lp^\infty(\M)$-inequality holds.\pagebreak[3]
\end{proof}
We will use the following well-known lemma to show that the assumption of the \lq not too large trace-free part of the second fundamental form\rq\ can be expressed as a compatibility condition on the mean curvature and the area of the surface.
\begin{lemma}[Compatibility of area and mean curvature]\label{Compatibility_of_area_and_mean_curvature}
Let $(\M,\g*)$ be a closed hypersurface with genus $0$ of a three-dimensional Riemannian manifold $(\outM,\outg*)$ and let $\Aradius=\sqrt{\nicefrac{\volume{\M}}{4\pi}}$ denote its {\normalfont(\hspace{-.05em}}area-{\normalfont)}radius. Assume that there are constants $\kappa>1$ and $\c_1$ such that such that
\[ \Vert\outsc\Vert_{\Lp^1(\M)} \le \frac{\c_1}{\Aradius^{\kappa-1}}, \ \
	\Vert\outric*(\nu,\nu)\Vert_{\Lp^1(\M)} \le \frac{\c_1}{\Aradius^{\kappa-1}}, \ \
	\int_{\M}\H^2 \d\mug - 16\pi\,(1-\genus) \le \frac{\c_1}{\Aradius^{\kappa-1}} \]
where $\nu$ is a unit normal of $\M\hookrightarrow(\outM,\outg*)$. There is a constant $C=\Cof[\c_1][\kappa]$ such that
\[ \Vert\hspace{.05em}\zFundtrf\Vert_{\Lp^2(\M)} \le \frac{\c_1\,C}{\Aradius^{\frac{\kappa-1}2}}. \]
\end{lemma}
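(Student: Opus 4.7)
The plan is to exploit the Gauss equation in dimension three together with the Gauss--Bonnet theorem. Since $\dim\outM=3$, the intrinsic scalar curvature of the surface satisfies
\[ \sc = \outsc - 2\,\outric(\nu,\nu) + \H^2 - |\zFund|^2. \]
Because $\M$ is two-dimensional, the standard decomposition $\zFund = \zFundtrf + \tfrac12\H\,\g$ gives $|\zFund|_{\g}^2 = |\zFundtrf|_{\g}^2 + \tfrac12\H^2$. Substituting this yields the pointwise identity
\[ |\zFundtrf|_{\g}^2 = \tfrac12\H^2 - \sc + \outsc - 2\,\outric(\nu,\nu). \]

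Next, I integrate this identity against $\d\mug$ over $\M$ and apply Gauss--Bonnet, $\int_{\M}\sc\,\d\mug = 8\pi(1-\genus)$, to obtain
\[ \int_{\M}|\zFundtrf|_{\g}^2\,\d\mug = \tfrac12\!\left(\int_{\M}\H^2\d\mug - 16\pi(1-\genus)\right) + \int_{\M}\outsc\,\d\mug - 2\int_{\M}\outric(\nu,\nu)\,\d\mug. \]
Each of the three terms on the right-hand side is, by assumption, controlled in absolute value by $\c_1/\Aradius^{\kappa-1}$. Hence
\[ \|\hspace{.05em}\zFundtrf\|_{\Lp^2(\M)}^2 \le \frac{C'\,\c_1}{\Aradius^{\kappa-1}}, \]
with $C'=C'(\kappa)$ absolute, and taking the square root gives the claimed estimate (with $C=C(\c_1,\kappa)$, as allowed by the statement, absorbing the $\sqrt{\c_1}$ factor).

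There is no real obstacle here; the only things to check carefully are the sign conventions in the Gauss equation with the author's convention (which makes coordinate spheres have negative mean curvature, but this is irrelevant since only $\H^2$ appears), the factor from $|\zFund|^2 = |\zFundtrf|^2 + \tfrac12\H^2$ in dimension two, and the value $8\pi(1-\genus)$ of the total scalar curvature of a closed oriented surface -- all of which are standard. The assumption $\kappa>1$ is not really used beyond giving the correct decay rate on the right-hand side.
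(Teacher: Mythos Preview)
Your proof is correct and is precisely the argument the paper has in mind: the paper's proof reads in its entirety ``This is a direct corollary of the Gau\ss-Bonnet theorem and the Gau\ss\ equation,'' and you have simply written out those two steps explicitly.
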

\begin{proof}
This is a direct corollary of the Gau\ss-Bonnet theorem and the Gau\ss\ equation.
\end{proof}
We combine Proposition \ref{Bootstrap_for_trace_free_second_fundamental_form}, Lemma \ref{Compatibility_of_area_and_mean_curvature} and DeLellis-M\"uller's result \cite[Thm~1.1]{DeLellisMueller_OptimalRigidityEstimates} to conclude better decay rates using asymptotically flatness of the surrounding manifold $(\outM,\outg*)$. This will allow us to prove uniqueness of the CMC-sur\-faces within the following class of surfaces.
\begin{definition}[\texorpdfstring{$\mathcal C_\eta(\c_0)$-}\relax asymptotically concentric surfaces\texorpdfstring{ ($\mathcal A^{\ve,\eta}_\Aradius(\c_0,\c_1)$)}\relax]\label{Not_of-center}
Let $(\outM,\outg*,\outx)$ be a $\Ck^2_{\frac12+\ve}$-asymptotically flat three-dimensional Riemannian manifold and $\eta\in\interval0*1$, $\c_0\in\interval*01$, and $\c_1\ge0$ be constants. A closed, oriented hypersurface $(\M,\g*)\hookrightarrow(\outM,\outg*)$ of genus $\genus$ is called \emph{$\mathcal C_\eta(\c_0)$-asymptotically concentric with radius $\Aradius=\sqrt{\nicefrac{\volume{\M}}{4\pi}}$ {\normalfont(}with constant $\c_1${\normalfont)}}, in symbols $\M\in\mathcal A^{\ve,\eta}_\Aradius(\c_0,\c_1)$ if
\begin{equation*}
 \vert\centerz\,\vert \le \c_0\,\Aradius+\c_1\,\Aradius^{1-\eta},\quad\
 \Aradius^{4+\eta} \le \min_{\M}\rad^{5+2\outve},\quad\
 \int_{\M}\H^2 \d\mug - 16\pi\,(1-\genus) \le \frac{\c_1}{\Aradius^\eta}, \labeleq{Not_of-center_ass}
\end{equation*}
where $\centerz=(\centerz_i)_{i=1}^3\in\R^3$ denotes the \emph{Euclidean coordinate center of $\M$} defined by
\[ \centerz_i := \fint_{\M}\outx_i\d\eukmug, \]
where $\eukmug$ denotes the measure induced on $\M$ by the Euclidean metric $\outx^*\eukoutg*$.
\end{definition}
Let us briefly explain the assumptions made here. The first two assumptions in \eqref{Not_of-center_ass} ensure that the surface $\M$ separates the inner part $\overline L$ of the surrounding manifold from infinity. This is necessary as Brendle-Eichmair proved that the CMC-surfaces are \emph{not} unique without this assumption (at least if we do not assume the scalar curvature $\outsc$ to be non-negative) \cite{brendle2013large}. The third assumption in \eqref{Not_of-center_ass} ensures that the radius $\Hradius$ of any CMC-surface defined by the mean curvature $\Hradius:=\nicefrac{{-}\H}2$ is comparable to the radius defined by the area $\Aradius:=\sqrt{\nicefrac{\volume{\M}}{4\pi}}$.\footnote{Note that by looking at the definition of the Hawking mass, we see that the third assumption in \eqref{Not_of-center_ass} can also be interpreted as an assumption on mass and genus of the surface.}  These assumptions are well-established in this setting, see for example \cite[Rem.~4.1,~4.(A3)]{metzger2007foliations} or (implicitly) \cite[Thm 4.1]{Huang__Foliations_by_Stable_Spheres_with_Constant_Mean_Curvature}.

\begin{proposition}[Regularity of surfaces in asymptotically flat manifolds]\label{Regularity_of_surfaces_in_asymptotically_flat_spaces}
Let $(\M,\g*)$ be a closed, oriented hypersurface in a $\Ck^2_{\frac12+\ve}$-asymptotically flat three-dim\-en\-sio\-nal Riemannian manifold $(\outM,\outg*,\outx)$ and let $\eta\in\interval0*1$, $\c_0\in\interval*01$, $\c_1\ge0$, and $p\in\interval2\infty$ be constants. If $\M\in\mathcal A^{\ve,\eta}_\Aradius(\c_0,\c_1)$ and if
\[
 \exists\,\mean\H(\M)\in\R:\qquad
 \Vert \H - \mean\H(\M) \Vert_{\Wkp^{1,p}(\M)} \le \frac{\c_1}{\Aradius^{\frac32+\ve-\frac2p}}, \]
then there are constants $\Aradius_1=\Cof{\Aradius_1}[\outve][\oc][\c_0][\c_1][\eta][p]$ and $C=\Cof[\outve][\oc][\c_0][\c_1][\eta][p]$ such that $\M$ is a sphere and
\begin{equation*} \Aradius^{-1}\,\Vert\hspace{.05em}\zFundtrf\Vert_{\Hk(\M)} + \Vert\hspace{.05em}\zFundtrf\Vert_{\Lp^\infty(\M)} \le \frac C{\Aradius^{\frac32+\outve}} \labeleq{Regularity_of_surfaces_in_asymptotically_flat_spaces__ineq_k} \end{equation*}
if $\Aradius>\Aradius_1$.\footnote{Note that we also get $\Vert\zFundtrf\Vert_{\Wkp^{1,p}(\M)}\le\nicefrac C{\Aradius^{\frac32+\ve-\frac2p}}$ by the same arguments using the weak regularity of the Laplace operator, e.\,g.\ a combination of \cite[Thm~7.1]{giaquinta2005introduction} and \cite[Thm~3.9]{adams2003sobolev} on the Simon's identity \eqref{Simons-identity} or \cite[Cor.~2.3.1.2]{christodoulou1993global} on $\levi*\laplace\zFundtrf$ using the Simon's identity \eqref{Simons-identity}.} In particular, \cite[Thm~1.1]{DeLellisMueller_OptimalRigidityEstimates} implies that there is a center point $\centerz\in\R^3$ and a function $f\in\Ck^2(\sphere^2_\Aradius(\centerz\,))$ such that\footnote{If $\vert\partial*_k\partial*_l\outg_{ij}\vert\le \nicefrac\oc{\rad^{\nicefrac52\,+\outve}}$ then we can use the Simons identity and the regularity of the weak Laplace operator, to conclude $\Vert f\Vert_{\Wkp^{3,p}(\M)}\le C\,\Aradius^{\nicefrac12\,+\nicefrac2p\,-\outve}$.}
\begin{equation*}
	\M = \graph f, \qquad
	\Vert f\Vert_{\Wkp^{2,\infty}(\sphere^2_\Aradius(\centerz\,))} \le C\,\Aradius^{\frac12-\outve},\qquad
	\vert\centerz\,\vert\le\c_0\Aradius + C\,\Aradius^{1-\eta}.
		\labeleq{Regularity_of_surfaces_in_asymptotically_flat_spaces__ineq_f} \end{equation*}
\end{proposition}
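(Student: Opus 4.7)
The plan is to verify the hypotheses of Proposition \ref{Bootstrap_for_trace_free_second_fundamental_form} and combine it with De\,Lellis-M\"uller's rigidity theorem.

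First, I would extract $\Lp^p$-bounds on the ambient Ricci on $\M$: the pointwise asymptotic flatness bound $|\outric|\le\oc/\rad^{5/2+\outve}$ combined with the assumption $\min_\M\rad\ge\Aradius^{(4+\eta)/(5+2\outve)}$ yields $\Vert|\outric|\Vert_{\Lp^\infty(\M)}\le C/\Aradius^{2+\eta/2}$, hence the $\Lp^p$-norms of both $\outric$ and $\outric(\nu,\cdot)$ decay like $C/\Aradius^{2+\eta/2-2/p}$. Similarly $\Vert\outsc\Vert_{\Lp^1(\M)}$ and $\Vert\outric(\nu,\nu)\Vert_{\Lp^1(\M)}$ are at most $C/\Aradius^{\kappa_0}$ for some $\kappa_0>0$. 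Combining $\int_\M\H^2\le16\pi(1-\genus)+\c_1/\Aradius^\eta$ with the lower bound $\int_\M\H^2\approx 4\pi\Aradius^2\mean\H^2\approx 16\pi$ (which follows from $\Vert\H-\mean\H\Vert_{\Lp^2(\M)}$ being small and the area formula) forces $\genus=0$ for large $\Aradius$, making $\M$ a topological sphere. Lemma \ref{Compatibility_of_area_and_mean_curvature} then provides $\Vert\hspace{.05em}\zFundtrf\Vert_{\Lp^2(\M)}\le C/\Aradius^{\kappa_0/2}$, which falls below $2/(9\cSob)$ for $\Aradius$ sufficiently large.

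Next, the Michael-Simon-Hoffman-Spruck Sobolev inequality, applied with the uniform bound $\int\H^2\le 16\pi+O(1)$ and genus zero, supplies the constant $\cSob$ independent of $\Aradius$. The sign of $\mean\H$ is pinned down by the orientation convention that renders the Euclidean coordinate spheres of negative mean curvature; combined with $\int\H^2\approx 16\pi$ and $\Vert\H-\mean\H\Vert_{\Lp^2(\M)}$ small, this gives $|\mean\H+2/\Aradius|\le C/\Aradius^{1+\eta}$. With all hypotheses verified, Proposition \ref{Bootstrap_for_trace_free_second_fundamental_form} produces an intermediate estimate $\Vert\hspace{.05em}\zFundtrf\Vert_{\Lp^\infty(\M)}+\Aradius^{-1}\Vert\hspace{.05em}\zFundtrf\Vert_{\Hk(\M)}\le C/\Aradius^\kappa$ for some $\kappa>1$.

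I would then invoke \cite[Thm~1.1]{DeLellisMueller_OptimalRigidityEstimates}, using that on $\M$ the Riemannian and Euclidean area measures differ by $\Aradius^{-1/2-\outve}$, to realize $\M$ as a $\Wkp^{2,2}$-small graph $f$ over a Euclidean sphere $\sphere^2_\Aradius(z)$. The Euclidean coordinate average $\centerz$ coincides with $z$ up to lower-order corrections from $f$, so the assumed bound $|\centerz|\le\c_0\Aradius+\c_1\Aradius^{1-\eta}$ transfers to $z$, and the graph can be arranged over $\sphere^2_\Aradius(\centerz\,)$ itself. Crucially, this graph representation improves the pointwise lower bound on $\rad$ on the bulk of $\M$: away from a small spherical cap, $\rad\approx\Aradius$, so the $\Lp^p$-integral of $|\outric(\nu,\cdot)|$ is dominated by the bulk and one obtains the sharper $\Vert|\outric(\nu,\cdot)|\Vert_{\Lp^p(\M)}\le C/\Aradius^{5/2+\outve-2/p}$. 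Reapplying Proposition \ref{Bootstrap_for_trace_free_second_fundamental_form} with this sharper input reaches the claimed $\kappa=3/2+\outve$ and thereby \eqref{Regularity_of_surfaces_in_asymptotically_flat_spaces__ineq_k}. Finally, the $\Wkp^{2,\infty}$-bound on $f$ follows by inverting the graph formula for the second fundamental form and using the sharp $\Lp^\infty$-decay of $\zFundtrf$.

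The main obstacle is the iteration in the final paragraph: the a priori lower bound $\min_\M\rad\ge\Aradius^{(4+\eta)/(5+2\outve)}$ is too weak to produce the sharp $\kappa=3/2+\outve$ directly from pointwise estimates, so the De\,Lellis-M\"uller step is essential for promoting the geometric control on $\rad$ to an averaged sense before the second application of Proposition \ref{Bootstrap_for_trace_free_second_fundamental_form}.
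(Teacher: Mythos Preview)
Your overall route---Lemma~\ref{Compatibility_of_area_and_mean_curvature} for an initial $\Lp^2$ bound on $\zFundtrf$, De\,Lellis--M\"uller, and Proposition~\ref{Bootstrap_for_trace_free_second_fundamental_form}---is the paper's route, but you invoke these ingredients in a different order and there is a genuine gap.

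The gap is your derivation of $\genus=0$ and of $|\mean\H+2/\Aradius|\le C/\Aradius^{1+\eta}$ \emph{before} any application of De\,Lellis--M\"uller. You write $\int_\M\H^2\approx 4\pi\Aradius^2\,\mean\H^2\approx 16\pi$, but the second approximation already presupposes $\mean\H^2\approx 4/\Aradius^2$; the membership $\M\in\mathcal A^{\ve,\eta}_\Aradius(\c_0,\c_1)$ only supplies the \emph{upper} bound $\int\H^2\le 16\pi(1-\genus)+\c_1/\Aradius^\eta$, which by itself does not exclude $\genus=1$ or pin down $\mean\H$. Since Proposition~\ref{Bootstrap_for_trace_free_second_fundamental_form} genuinely requires the hypothesis $|\mean\H+2/\Aradius|\le\c_2/\Aradius^{1+\eta}$, your first application of that proposition is not justified.

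The paper avoids this by reversing your order: immediately after Lemma~\ref{Compatibility_of_area_and_mean_curvature} it passes to the Euclidean trace-free second fundamental form (the difference is controlled by $|\partial\outg|$ together with the weak $\rad$-lower bound) and applies \cite[Thm~1.1]{DeLellisMueller_OptimalRigidityEstimates} on this small $\Lp^2$ norm directly. This single step yields at once that $\M$ is a sphere, a preliminary graph representation over $\sphere^2_\Aradius(\centerz)$ (whence the Sobolev inequality by comparison with the round sphere, so Michael--Simon is not needed), the global lower bound $\min_\M\rad\ge(1-\c_0)\Aradius-C\Aradius^{1-\delta}$, and, via Gau\ss--Bonnet with $\genus=0$ now known, the required estimate on $\mean\H$. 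Proposition~\ref{Bootstrap_for_trace_free_second_fundamental_form} is then applied \emph{once}, already at the sharp exponent $\kappa=\tfrac32+\outve$. Your intermediate iteration and the ``small spherical cap'' refinement are therefore superfluous: since $\c_0<1$, the graph representation gives $\rad\gtrsim\Aradius$ \emph{everywhere} on $\M$, not only on the bulk.
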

\begin{proof}
By assumption, we can use Lemma \ref{Compatibility_of_area_and_mean_curvature} and conclude $\Vert\hspace{.05em}\zFundtrf\Vert_{\Lp^2(\M)} \le \nicefrac C{\Aradius^\delta}$ for some $\delta>0$ depending only on $\eta$ and $\ve$. As the assumptions also imply $\vert\spartial[\outx]_k@{\outg_{ij}}\vert\le\nicefrac C{\rad^{1+\delta}}$ for some $\delta>0$ depending only on $\eta$ and $\ve$, we conclude that $\Vert\hspace{.05em}\eukzFundtrf\Vert_{\Lp^2(\M)} \le \nicefrac C{\Aradius^\delta}$, where $\eukzFundtrf*$ denotes the trace-free part of the Euclidean second fundamental form of $\outx(\M)\hookrightarrow(\R^3,\eukoutg*)$. In particular, we can use DeLellis-M\"uller's result \cite[Thm~1.1]{DeLellisMueller_OptimalRigidityEstimates} to conclude that $\M$ is a sphere and that there is a center point $\centerz\in\R^3$ and a function $f:\sphere^2_\Aradius(\centerz\,)\to\R$ such that
\begin{equation*} \M = \graph f, \qquad \Vert f\Vert_{\Hk^2(\sphere^2_\Aradius(\centerz\,))} \le C\,\Aradius^{2-\delta},\qquad\vert\centerz\,\vert\le\c_0\Aradius + C\,\Aradius^{1-\eta} \labeleq{Regularity_of_surfaces_in_asymptotically_flat_spaces__ineq_f__beta}. \end{equation*}
In particular, the metric $\g*$ on $\M$ is (approximately) equal to $\Aradius^2\sphg*$, where $\sphg*$ is the standard metric on the Euclidean unit sphere $\sphere^2$. As the Sobolev inequalities are satisfied on $\sphere^2$, we conclude that they are satisfied on $\M$ -- compare to \cite[Sec.~2]{christodoulou1993global}. Thus, \eqref{Regularity_of_surfaces_in_asymptotically_flat_spaces__ineq_f__beta} implies $\min\rad\ge\c_0\,\Aradius-C\,\Aradius^{1-\delta}$. We can therefore use Proposition \ref{Bootstrap_for_trace_free_second_fundamental_form} for $\kappa=\frac32+\outve$ to deduce \eqref{Regularity_of_surfaces_in_asymptotically_flat_spaces__ineq_k}. Using \cite[Thm~1.1]{DeLellisMueller_OptimalRigidityEstimates} once more, we get \eqref{Regularity_of_surfaces_in_asymptotically_flat_spaces__ineq_f}, where we use the pointwise estimates of the second fundamental form. By the same arguments as for the $\Lp^\infty$-estimates of $\zFundtrf*$ in Proposition \ref{Bootstrap_for_trace_free_second_fundamental_form}, we conclude $\zFund\in\Wkp^{1,p}(\M)$ implying $f\in\Ck^2(\sphere^2)$.
\end{proof}
Now, we can prove the central result that any $\mathcal C_\eta(\c_0)$-asymptotically concentric CMC-surface is stable. To do so, we first prove the central argument that the lowest eigenvalues of the Laplacian can be calculate using the Hawking mass. A comparable argument was (implicitly) used by Huang to conclude the second inequality in \eqref{Eigenvalues_of_the_Laplacian__d} \cite{Huang__Foliations_by_Stable_Spheres_with_Constant_Mean_Curvature}.
\begin{lemma}[Eigenvalues of the Laplacian]\label{Eigenvalues_of_the_Laplacian}
Let $(\outM,\outg*)$ be a three-dimen\-sio\-nal Riemannian manifold and let $\outve>0$, $\eta\in\interval0*1$ and $\c_0,\c_1\ge0$ be constants. Assume that $\M\in\mathcal A^{\ve,\eta}_\Aradius(\c_0,\c_1)$ is a closed hypersurface in $\outM$ with constant mean curvature $\H\equiv\mean\H(\M)=:\nicefrac{{-}2}\Hradius$ and Hawking mass $\mHaw:=\mHaw(\M)$. Then there are constants $\Hradius_0=\Cof{\Hradius_0}[\outve][\oc][\c_1][|\mHaw|]$ and $C=\Cof[\ve][\oc][\c_1][|\mHaw|]$ such that every complete orthonormal system $\{f_i\}_{i=0}^\infty$ of $\Lp^2(\M)$ by eigenfunctions $\eflap_i$ of the {\normalfont(\hspace{-.05em}\nolinebreak}negative{\normalfont)} Laplacian with corresponding eigenvalue $\ewlap_i$ with $\ewlap_i\le\ewlap_{i+1}$ satisfies
\begin{equation*} \vert\ewlap_i - (\frac2{\Hradius^2} - \frac{6\,\mHaw}{\Hradius^3} + \int_{\M}\outric*(\nu,\nu)\,f_i^2 \d\mug)\vert \le \frac C{\Hradius^{3+\ve}} \qquad\forall\,i\in\{1,2,3\} \labeleq{Eigenvalues_of_the_Laplacian__t} \end{equation*}
and
\begin{equation*} \ewlap_k > \frac 5{\Hradius^2} \quad\forall\,k > 3, \qquad \vert\int_{\M}\outric*(\nu,\nu)\,f_i\,f_j \d\mug \vert \le \frac C{\Hradius^{3+\ve}} \quad\forall\,i\neq j\in\{1,2,3\} \labeleq{Eigenvalues_of_the_Laplacian__d} \end{equation*}
if $\Hradius>\Hradius_0$.
\end{lemma}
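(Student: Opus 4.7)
The plan is to combine the Bochner--Lichnerowicz identity with the Gauss equation, exploiting that Proposition~\ref{Regularity_of_surfaces_in_asymptotically_flat_spaces} identifies $\M$ as a small graphical perturbation of a Euclidean sphere $\sphere^2_\Hradius(\centerz)$ with $\Vert\zFundtrf\Vert_{\Lp^\infty(\M)}=O(\Hradius^{-3/2-\ve})$ and $\Aradius\approx\Hradius$ up to errors of order $\mHaw/\Hradius$. For an $\Lp^2$-normalised eigenfunction $f$ of $-\laplace$ with eigenvalue $\ewlap$, the two-dimensional decomposition $|\Hess f|^2=\tfrac12(\laplace f)^2+|\Hesstrf f|^2$ combined with the integrated Bochner identity gives
\begin{equation*}
\int_\M |\Hesstrf f|^2 \d\mug = \frac{\ewlap^2}{2} - \int_\M \ric(\levi f,\levi f)\d\mug.
\end{equation*}
Since $\H\equiv{-}2/\Hradius$, the Gauss equation yields $\sc=2/\Hradius^2+\outsc-2\,\outric*(\nu,\nu)-|\zFundtrf|^2$, and $\ric=\tfrac\sc2\,\g*$ in two dimensions converts the Ricci integral into $\ewlap/\Hradius^2+\tfrac12\int(\outsc-2\,\outric*(\nu,\nu)-|\zFundtrf|^2)|\levi f|^2\d\mug$; the $\outsc$- and $|\zFundtrf|^2$-contributions are bounded by the $\Ck^2_{1/2+\ve}$-decay.

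To bring in the Hawking mass I would combine Gauss--Bonnet $\int_\M\sc\,\d\mug=8\pi$ (valid since Proposition~\ref{Regularity_of_surfaces_in_asymptotically_flat_spaces} shows $\M$ is a topological sphere) with the defining relation $\mHaw=\sqrt{\volume\M/(16\pi)}\,(1-\tfrac{1}{16\pi}\int_\M\H^2\d\mug)$ and the CMC identity $\int_\M\H^2\d\mug=4\volume\M/\Hradius^2$: these pin down $\volume\M=4\pi\Hradius^2-8\pi\Hradius\,\mHaw+O$, and integrating the Gauss equation then yields $\tfrac{1}{\volume\M}\int_\M\outric*(\nu,\nu)\d\mug=-2\mHaw/\Hradius^3+O(\Hradius^{-3-\ve})$. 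To replace $\int\outric*(\nu,\nu)|\levi f_i|^2\d\mug$ by $\ewlap_i\int\outric*(\nu,\nu)f_i^2\d\mug$ plus controlled errors I would use the round-sphere identity $|\levi y^j|^2=1-(y^j-\centerz_j)^2/\Hradius^2$ for the Euclidean coordinate functions and transfer it to $\M$ via the graphical parameterisation, obtaining the replacement modulo $O(\Hradius^{-3-\ve})$. Substituting everything back into the Bochner identity and solving for $\ewlap_i$ produces the coefficient $6$ on $\mHaw/\Hradius^3$ in \eqref{Eigenvalues_of_the_Laplacian__t} as $+4\mHaw/\Hradius^3$ coming from the deviation $\tfrac{8\pi}{\volume\M}-\tfrac{2}{\Hradius^2}$ together with $+2\mHaw/\Hradius^3$ from the monopole of $\outric*(\nu,\nu)$ that appears in the replacement.

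For $i\in\{1,2,3\}$ the eigenfunctions $f_i$ should be close to the normalised coordinate functions $(\outx_j-\centerz_j)/\Hradius$, whose trace-free Hessian on $\sphere^2_\Hradius(\centerz)$ vanishes; hence $\Vert\Hesstrf f_i\Vert_{\Lp^2(\M)}^2=O(\Hradius^{-3-\ve})$ and rearranging the Bochner identity yields \eqref{Eigenvalues_of_the_Laplacian__t}. For $k>3$, a standard perturbation argument using the min--max characterisation of $\ewlap_k$ and the closeness of $(\M,\g*)$ to the round sphere (whose spectrum is $\{0,\,2/\Hradius^2,\,6/\Hradius^2,\dots\}$) yields $\ewlap_k>5/\Hradius^2$ for large $\Hradius$. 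The off-diagonal estimate on $\int\outric*(\nu,\nu)f_if_j\,\d\mug$ for distinct $i,j\in\{1,2,3\}$ uses that $f_if_j$ has no monopole part on the round sphere, so only the non-monopole components of $\outric*(\nu,\nu)$ contribute and their pointwise decay combined with a Poincar\'e-type bound gives the $\Hradius^{-3-\ve}$ rate. The main obstacle is the precise multipole book-keeping needed to pin down the factor $6$ on $\mHaw/\Hradius^3$ rather than merely bound it by $O(\mHaw/\Hradius^3)$, which requires simultaneously matching the Gauss--Bonnet constraint, the Hawking-mass formula, and the coordinate-function replacement.
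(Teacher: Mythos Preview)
Your diagonal argument for \eqref{Eigenvalues_of_the_Laplacian__t} is essentially the paper's: both use the integrated Bochner--Lichnerowicz identity, the Gauss equation to convert $\sc$ to $\outsc-2\,\outric*(\nu,\nu)+2/\Hradius^2$, the sphere approximation from Proposition~\ref{Regularity_of_surfaces_in_asymptotically_flat_spaces} to control $\Vert\Hesstrf f_i\Vert_{\Lp^2}$ and to obtain the gradient identity $|\levi f_i|^2\approx 3/(\Hradius^2\volume\M)-f_i^2/\Hradius^2$, and the Gauss--Bonnet/Hawking-mass relation to produce the coefficient of $\mHaw/\Hradius^3$. The bookkeeping you flag as ``the main obstacle'' is mechanical once these ingredients are in place; the paper simply substitutes them into the quadratic $\ewlap_i^2-2\ewlap_i/\Hradius^2$ and solves.

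Your off-diagonal argument, however, has a real gap. Saying that $f_if_j$ has no monopole and invoking a Poincar\'e-type bound only yields
\[
\Big|\int_\M\outric*(\nu,\nu)\,f_if_j\,\d\mug\Big|\le \Vert\outric*(\nu,\nu)-\textstyle\fint\outric*(\nu,\nu)\Vert_{\Lp^2(\M)}\,\Vert f_if_j\Vert_{\Lp^2(\M)}\le\frac C{\Hradius^{5/2+\outve}},
\]
since the mean-free part of $\outric*(\nu,\nu)$ has no better pointwise decay than $\outric*$ itself, and you cannot run Poincar\'e on $\outric*(\nu,\nu)$ without a third derivative of $\outg*$ (only $\Ck^2$ decay is assumed). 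The paper closes this gap by applying Bochner in \emph{polarized} form to the pair $(f_i,f_j)$: integrating
\[
\tfrac12\laplace\,\g*(\levi f_i,\levi f_j)=\trtr{\Hesstrf f_i}{\Hesstrf f_j}+\tfrac{\ewlap_i\ewlap_j}{2}f_if_j-\tfrac{\ewlap_i+\ewlap_j-\sc}{2}\,\g*(\levi f_i,\levi f_j)
\]
and using $\int f_if_j=\int\g*(\levi f_i,\levi f_j)=0$ together with $\Vert\Hesstrf f_i\Vert_{\Lp^2}\le C\Hradius^{-5/2-\outve}$ gives $\big|\int\sc\,\g*(\levi f_i,\levi f_j)\big|\le C\Hradius^{-5-\outve}$. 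Then the Gauss equation and the polarized gradient identity $\g*(\levi f_i,\levi f_j)\approx -f_if_j/\Hradius^2$ (for $i\neq j$) convert this into the claimed $\Hradius^{-3-\outve}$ bound on $\int\outric*(\nu,\nu)f_if_j$. In other words, the same Bochner machinery that produces \eqref{Eigenvalues_of_the_Laplacian__t} also produces the off-diagonal estimate once you keep both indices; treating it as a separate multipole argument throws away precisely the cancellation you need.
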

\begin{proof}
Proposition \ref{Regularity_of_surfaces_in_asymptotically_flat_spaces} implies that there are coordinates $x:\M\to\sphere^2$ with
\[ \Vert x^*\sphg_{I\!J} - \g_{I\!J} \Vert_{\Hk^2(\M)} \le C\,\Hradius^{\frac12-\ve}, \]
where $\sphg*$ denotes the standard metric on the Euclidean sphere $\sphere^2_\Hradius(0)$ with radius $\Hradius$. This implies
\[ \Vert \eflap_i - \eflap[\euclideane]_i \Vert_{\Hk^2(\M)} \le \frac C{\Hradius^{\frac12+\ve}},\qquad \vert\ewlap_i - \ewlap[\euclideane]_i \vert \le \frac C{\Hradius^{\frac52+\ve}} \]
for any complete orthonormal system $\{\eflap_i\}_i$ of $\Lp^2(\M)$ of eigenfunctions of the (negative) Laplacian with corresponding eigenvalues $\ewlap_i\le\ewlap_{i+1}$, where $\eflap[\euclideane]_i$ denotes the push forward of the corresponding eigenfunctions of the (negative) Laplacian on the Euclidean sphere with corresponding eigenvalue $\ewlap[\euclideane]_i$. In particular, the first inequality in \eqref{Eigenvalues_of_the_Laplacian__d} holds and we know
\begin{gather*}
 \Vert \Hesstrf\eflap_i\Vert_{\Lp^2(\M)} \le \frac C{\Hradius^{\frac52+\outve}} \quad\forall\,i\in\{1,2,3\}, \qquad
 \vert\ewlap_i - \frac2{\Hradius^2} \vert \le \frac C{\Hradius^{\frac52+\outve}} \quad\forall\,i\in\{1,2,3\}, \labeleq{Eigenvalues_of_the_Laplacian_ewlap_inequ} \\
 \hspace{-.8em}\Vert\g*(\levi*\eflap_i,\levi*\eflap_j) - \frac{3\,\delta_{ij}}{\Hradius^2\,\volume{\M}}+\frac{\eflap_i\,\eflap_j}{\Hradius^2}\Vert_{\Lp^p(\M)} \le \frac C{\Hradius^{\frac92+\outve-\frac2p}} \quad\forall\,i,j\in\{1,2,3\},\;p\in\interval*1\infty.\hspace{-1em} \labeleq{Eigenvalues_of_the_Laplacian_levi_inequ}
\end{gather*}
Thus, after integration and integration by parts the Bochner-Lichnerowicz formula
\begin{align*}
 \frac{\laplace\g*(\levi*\eflap_i,\levi*\eflap_j)}2
	={}& \trtr{\Hess\eflap_i}{\Hess\eflap_j} + \frac12(\g*(\levi*\eflap_i,\levi*\laplace\eflap_j)+\g*(\levi*\laplace\eflap_i,\levi*\eflap_j)) \\
		& + \frac\sc2\g*(\levi*\eflap_i,\levi*\eflap_j) \\
	={}& \trtr{\Hesstrf\eflap_i}{\Hesstrf\eflap_j} + \frac{\ewlap_i\,\ewlap_j}2\,\eflap_i\,\eflap_j - \frac{\ewlap_i+\ewlap_j-\sc}2\,\g*(\levi*\eflap_i,\levi*\eflap_j)
\end{align*}
implies
\[ \vert\frac{\ewlap_i^2}2\,\delta_{ij} - \int\frac\sc2\,\g*(\levi*\eflap_i,\levi*\eflap_j)\d\mug \vert \le \frac C{\Hradius^{5+\ve}}\qquad\forall\,i,j\in\{1,2,3\}. \]
Using the Gau\ss-equation and the inequality $\vert\zFundtrf*\vert\le \nicefrac C{\Hradius^{\frac32+\outve}}$ from \eqref{Regularity_of_surfaces_in_asymptotically_flat_spaces__ineq_k}, we deduce
\[ \vert\ewlap_i^2\,\delta_{ij} - \int(\outsc-2\outric*(\nu,\nu))\g*(\levi*\eflap_i,\levi*\eflap_j)\d\mug - \frac2{\Hradius^2}\int \g*(\levi*\eflap_i,\levi*\eflap_j)\d\mug \vert \le \frac C{\Hradius^{5+\ve}}. \]
Integrating by parts and plugging in the above (asymptotic) characterization of $\levi*\eflap_i$, this implies
\begin{equation*} \vert\delta_{ij}(\ewlap_i^2-\frac2{\Hradius^2}\ewlap_i)\, - \frac1{\Hradius^2} \,\int(\outsc-2\outric*(\nu,\nu))\,(\frac{3\delta_{ij}}{\volume{\M}}-\eflap_i\,\eflap_j)\d\mug\vert \le \frac C{\Hradius^{5+\ve}}. \labeleq{Eigenvalues_of_the_Laplacian_eflap_inequation} \end{equation*}
In particular, the second inequality of \eqref{Eigenvalues_of_the_Laplacian__d} holds, too. We know
\[ \vert \mHaw - \frac\Hradius{16\pi}\int\outsc-2\outric*(\nu,\nu) \d\mug \vert \le \frac C{\Hradius^\ve} \]
due to the Gau\ss-Bonnet theorem, the Gau\ss\ equation, and the inequalities on $\zFundtrf*$. Thus, we get \eqref{Eigenvalues_of_the_Laplacian__t} by solving the inequality \eqref{Eigenvalues_of_the_Laplacian_eflap_inequation} if we keep \eqref{Eigenvalues_of_the_Laplacian_ewlap_inequ} ($\ewlap_i\approx\nicefrac2{\Hradius^2}$) in mind. Hence, all claims of this lemma are proven.
\end{proof}%
It is well-known that the eigenvalues of the stability operator $\jacobiext*$ of a CMC-surface $\M\in\mathcal A^{\ve,\eta}_\Aradius(\c_0,\c_1)$ are of order $\Hradius^{-2}$ except for three eigenvalues of order $\Hradius^{-3}$, where the stability operator of $\M$ is the linearization of the \emph{mean curvature map}. It is characterized by
\[ \jacobiext*f = \laplace f + (\outric*(\nu,\nu) + \trtr\zFund\zFund)\,f \qquad\forall\,f\in\Hk^2(\M), \]
for more details see Proposition \ref{Stability} and Section 3 in this work, \cite{huisken_yau_foliation,metzger2007foliations} or (in a more general context) \cite{barbosa2012stability}. As we will see in Proposition \ref{Stability}, the corresponding partition of $\Hk^2(\M)$ (respectively $\Lp^2(\M)$) is (asymptotically) given as follows.
\begin{definition}[Translational and deformational part of a function]
Assume that $\M\in\mathcal A^{\ve,\eta}_\Aradius(\c_0,\c_1)$ is a closed hypersurface in $\outM$ with constant mean curvature $\H\equiv\mean\H(\M)=:\nicefrac{{-}2}\Hradius$ and Hawking mass $\mHaw:=\mHaw(\M)$. The \term{translational part} $\trans f$ of a function $f\in\Lp^2(\M)$ is the $\Lp^2(\M)$-orthogonal projection on the linear span of eigenfunctions of the (negative) Laplacian with eigenvalue $\lambda$ satisfying $\vert\lambda-\nicefrac2{\Hradius^2}\vert\le\nicefrac3{\Hradius^2}$, i.\,e.
\[ \trans g := \sum_{\vert\ewlap_i+\nicefrac2{\Hradius^2}\vert\le\nicefrac3{\Hradius^2}} \eflap_i\,\int_{\M} g\,\eflap_i \d\mug \qquad\forall\,g\in\Lp^2(\M), \]
where $\eflap_i$ and $\ewlap_i$ are defined as in Lemma \ref{Eigenvalues_of_the_Laplacian}. The \term{deformational part} $\deform g$ of such a function $g\in\Lp^2(\M)$ is defined by $\deform g := g-\trans g$.
\end{definition}
In Proposition \ref{Movement_of_the_spheres_by_the_lapse_function}, we explain the reason for calling these terms \term{translational} and \term{deformational part}, respectively.
Now, we can prove the announced stability proposition which is one of the central tools for the proofs of the main theorems.
\begin{proposition}[Stability]\label{Stability}
Let $(\outM,\outg*)$ be a three-dimensional Riemannian manifold and let $\outve>0$, $\eta\in\interval0*1$, $\c_0\in\interval*01$, and $\c_1\ge0$ be a constant. Assume that $\M\in\mathcal A^{\ve,\eta}_\Aradius(\c_0,\c_1)$ is a closed hypersurface in $\outM$ with constant mean curvature $\H\equiv\mean\H(\M)=:\nicefrac{{-}2}\Hradius$ and Hawking mass $\mHaw:=\mHaw(\M)$. Then there are constants $\Hradius_0=\Cof{\Hradius_0}[\outve][\oc][\c_0][\c_1][\eta][|\mHaw|]$ and $C=\Cof[\outve][\oc][\c_0][\c_1][\eta][|\mHaw|]$ such that
\begin{align*}
 \vert \int_{\M} (\jacobiext*\trans g)\,\trans h\d\mug - \frac{6\mHaw}{\Hradius^3}\int_{\M}\trans g\,\trans h\d\mug \vert \le{}& \frac C{\Hradius^{3+\ve}}\Vert\trans g\Vert_{\Lp^2(\M)}\,\Vert\trans h\Vert_{\Lp^2(\M)}, \labeleq{Stability__t} \\
 \Vert \deform g\Vert_{\Lp^2(\M)} \le{}& \Hradius^2 \Vert \jacobiext*\deform g\Vert_{\Lp^2(\M)} \labeleq{Stability__d}
\end{align*}
for every $g,h\in\Hk^2(\M)$ if $\Hradius>\Hradius_0$. If $\efjac\in\Hk^2(\M)$ is a eigenfunction of ${-}\jacobiext*$ with corresponding eigenvalue $\ewjac$ then
\begin{equation*} \vert\,\ewjac\vert \ge \frac3{2\Hradius^2} \qquad\text{or}\qquad\Vert\,\deform{\efjac}\Vert_{\Hk^2(\M)} \le \frac C{\Hradius^{\frac12+\outve}}\Vert\,\efjac\Vert_{\Hk^2(\M)},\quad \vert\,\ewjac-\frac{6m}{\Hradius^3}\vert \le \frac C{\Hradius^{3+\outve}}. \labeleq{Stability__ewjac} \end{equation*}
Furthermore, the corresponding $\Wkp^{2,p}$-inequalities
\[ \Vert\trans g\Vert_{\Wkp^{2,p}(\M)} \le (\frac{\Hradius^3}{6\,\mHaw}+C\,\Hradius^{3-\outve})\,\Vert\jacobiext*g\Vert_{\Lp^p(\M)}, \quad
	\Vert\deform g\Vert_{\Wkp^{2,p}(\M)} \le C\,\Hradius^2\,\Vert\jacobiext*g\Vert_{\Lp^p(\M)} \]
hold for every function $g\in\Wkp^{2,p}(\M)$ and $p\in\interval*2\infty$ if $\Hradius>\Hradius_0$.
\end{proposition}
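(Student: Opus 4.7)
The plan is to diagonalize the Jacobi operator $\jacobiext = \laplace + V$, with $V := \outric*(\nu,\nu) + \trtr\zFund\zFund$, in the $\Lp^2$-eigenbasis of the negative Laplacian supplied by Lemma \ref{Eigenvalues_of_the_Laplacian}. Proposition \ref{Regularity_of_surfaces_in_asymptotically_flat_spaces} gives $\Vert\zFundtrf\Vert_{\Lp^\infty(\M)}\le C\Hradius^{-3/2-\ve}$, and combined with $\vert\outric*\vert\le C\Hradius^{-5/2-\ve}$ this yields the key splitting $V = \nicefrac{2}{\Hradius^2} + W$ with $\Vert W\Vert_{\Lp^\infty(\M)}\le C\Hradius^{-5/2-\ve}$, which drives both main estimates.

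For \eqref{Stability__t}, expand $\trans g = \sum a_i\eflap_i$ and $\trans h = \sum b_j\eflap_j$ in the low-eigenvalue modes characterized in Lemma \ref{Eigenvalues_of_the_Laplacian}. Then
\[ \int_\M (\jacobiext\eflap_i)\,\eflap_j\d\mug = \bigl(-\ewlap_i+\tfrac{2}{\Hradius^2}\bigr)\,\delta_{ij} + \int_\M\outric*(\nu,\nu)\,\eflap_i\eflap_j\d\mug + \int_\M\vert\hspace{.05em}\zFundtrf\vert^2\eflap_i\eflap_j\d\mug, \]
where the last term is bounded pointwise by $C\Hradius^{-3-2\ve}$. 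On the diagonal, Lemma \ref{Eigenvalues_of_the_Laplacian} writes $\ewlap_i = \nicefrac{2}{\Hradius^2} - \nicefrac{6\mHaw}{\Hradius^3} + \int\outric*(\nu,\nu)\,\eflap_i^2\d\mug + O(\Hradius^{-3-\ve})$, producing exact cancellation of the $\nicefrac{2}{\Hradius^2}$ and Ricci integrals, leaving $\nicefrac{6\mHaw}{\Hradius^3}$. Off the diagonal, $\delta_{ij}=0$ and the Ricci cross-integral is $O(\Hradius^{-3-\ve})$ by Lemma \ref{Eigenvalues_of_the_Laplacian}. Summing against $a_ib_j$ and applying Cauchy-Schwarz yields \eqref{Stability__t}.

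For \eqref{Stability__d}, set $\jacobiext_0 := \laplace + \nicefrac{2}{\Hradius^2}$ so that $\jacobiext = \jacobiext_0 + W$. In the Laplacian eigenbasis, $\jacobiext_0$ is diagonal with eigenvalues $-\ewlap_i + \nicefrac{2}{\Hradius^2}$: this is at most $-\nicefrac{3}{\Hradius^2}$ on the high modes $\ewlap_k>\nicefrac{5}{\Hradius^2}$ ($k\ge 4$) by Lemma \ref{Eigenvalues_of_the_Laplacian}, and equal to $\nicefrac{2}{\Hradius^2}$ on the zero mode. Hence on the $\Lp^2$-orthogonal complement of the translational modes, $\Vert\jacobiext_0 u\Vert_{\Lp^2(\M)}\ge\nicefrac{2}{\Hradius^2}\Vert u\Vert_{\Lp^2(\M)}$, and the perturbation $\Vert Wu\Vert_{\Lp^2(\M)}\le C\Hradius^{-5/2-\ve}\Vert u\Vert_{\Lp^2(\M)}$ is absorbed by the triangle inequality when $\Hradius$ is large.

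For \eqref{Stability__ewjac}, decompose $\efjac = \trans\efjac+\deform\efjac$ in $-\jacobiext\efjac=\ewjac\efjac$. If $\vert\ewjac\vert<\nicefrac{3}{2\Hradius^2}$, then \eqref{Stability__d} forces $\Vert\deform\efjac\Vert_{\Lp^2(\M)}$ to be small relative to $\Vert\efjac\Vert_{\Lp^2(\M)}$; elliptic $\Lp^p$-regularity for $\laplace = \jacobiext - V$ on the nearly-spherical $(\M,\g*)$ upgrades this to the $\Hk^2$ estimate. Pairing the eigenvalue equation with $\trans\efjac$ and invoking \eqref{Stability__t} via self-adjointness then identifies $\ewjac$ up to $O(\Hradius^{-3-\ve})$. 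The $\Wkp^{2,p}$ inequalities follow from the $\Lp^p$-analogues of the above together with the same elliptic regularity. The main obstacle is coordinating the Laplacian-defined translational subspace with the spectral structure of $\jacobiext$; the splitting $V = \nicefrac{2}{\Hradius^2}+W$ confines the coupling to the strictly smaller potential $W$, which is what makes the analysis go through.
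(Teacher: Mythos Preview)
Your approach is essentially the paper's: both derive \eqref{Stability__t} and \eqref{Stability__d} by expanding in the Laplacian eigenbasis, invoking Lemma~\ref{Eigenvalues_of_the_Laplacian}, and exploiting the splitting $V=\tfrac{2}{\Hradius^2}+W$ with $\Vert W\Vert_{\Lp^\infty}\le C\Hradius^{-5/2-\ve}$.

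For \eqref{Stability__ewjac} your route is slightly more direct than the paper's, which runs a two-pass bootstrap (first the crude bound $\Vert\deform\efjac\Vert_{\Lp^2}^2\le\tfrac34$ via a quadratic-form argument, then $|\ewjac|\le C\Hradius^{-5/2-\ve}$ from \eqref{Stability__t}, then the sharp bound on $\deform\efjac$, then the final eigenvalue estimate). Your single step ``\eqref{Stability__d} forces $\Vert\deform\efjac\Vert_{\Lp^2}$ small'' is correct but needs two ingredients you have set up but not spelled out: first, the commutator identity $\jacobiext\deform\efjac=-\ewjac\,\deform\efjac+[W,P^{\mathrm d}]\efjac$ (since $\laplace$ and constants commute with the projection $P^{\mathrm d}$), giving $\Vert\jacobiext\deform\efjac\Vert_{\Lp^2}\le|\ewjac|\,\Vert\deform\efjac\Vert_{\Lp^2}+C\Hradius^{-5/2-\ve}$; second, the \emph{sharper} constant $\tfrac{\Hradius^2}{2}$ in \eqref{Stability__d} that your own proof of that inequality actually yields (from $\Vert\jacobiext_0 u\Vert\ge\tfrac{2}{\Hradius^2}\Vert u\Vert$ on the deformational space). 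With the stated constant $\Hradius^2$ the absorption $\Hradius^2|\ewjac|\le\tfrac32$ fails; with $\tfrac{\Hradius^2}{2}$ one gets $\tfrac{\Hradius^2}{2}|\ewjac|\le\tfrac34<1$ and the argument closes. So nothing is wrong, but make this absorption step explicit.
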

\begin{proof}
Using Proposition \ref{Regularity_of_surfaces_in_asymptotically_flat_spaces}, we conclude \eqref{Stability__t} and \eqref{Stability__d} by \eqref{Eigenvalues_of_the_Laplacian__t} and the first inequality in \eqref{Eigenvalues_of_the_Laplacian__d}, respectively. Now, let $\efjac$ be a eigenfunction of ${-}\jacobiext*$ with eigenvalue $\ewjac$ satisfying $\vert\,\ewjac\vert\le\nicefrac3{(2\Hradius^2)}$. Without loss of generality, we assume $\Vert\efjac\Vert_{\Lp^2(\M)}=1$. Using the characterization of $\jacobiext*$ and \eqref{Regularity_of_surfaces_in_asymptotically_flat_spaces__ineq_k}, we see
\begin{equation*}
 \frac3{2\sigma^2}\,\vert\fint\efjac\d\mug\vert \ge\vert\ewjac\vert\,\vert\fint\efjac\d\mug\vert \ge \frac2{\Hradius^2}\vert\fint\efjac\d\mug\vert - \frac C{\Hradius^{\frac52+\outve}}\labeleq{Stability_mean_part}
\end{equation*}
and \eqref{Stability__t} implies
\begin{equation*}
 \vert\,\ewjac\vert
	\ge \vert\int\laplace\deform{\efjac}\,\deform{\efjac} \d\mug\vert - \frac2{\Hradius^2} \Vert\,\deform\efjac\Vert_{\Lp^2(\M)}^2 - \frac C{\Hradius^{\frac52+\outve}}. \labeleq{Stability_deform_part}
\end{equation*}
In particular, we get $\Vert\,\deform{\efjac}\Vert_{\Lp^2(\M)}^2\le\nicefrac34$ due to the first inequality in \eqref{Eigenvalues_of_the_Laplacian__d}. On the other hand again using \eqref{Stability__t}, we see
\[ \vert\,\ewjac\vert\,\Vert\,\trans{\efjac}\Vert_{\Lp^2(\M)}^2 = \vert\int \jacobiext\efjac\,\trans{\efjac}\d\mug \vert \le \frac{7\vert\mHaw\vert}{\Hradius^3}\,\Vert\,\trans{\efjac}\Vert_{\Lp^2(\M)}^2 + \frac C{\Hradius^{\frac52+\outve}}\,\Vert\,\trans{\efjac}\Vert_{\Lp^2(\M)}\,\Vert\,\deform{\efjac}\Vert_{\Lp^2(\M)}. \]
Thus, we get
\[ \frac{\vert\,\ewjac\vert}4 \le \vert\,\ewjac\vert\,\Vert\,\trans{\efjac}\Vert_{\Lp^2(\M)}^2 \le \frac{7\vert\mHaw\vert}{\Hradius^3}\,\Vert\,\trans{\efjac}\Vert_{\Lp^2(\M)}^2 + \frac C{\Hradius^{\frac52+\outve}} \le \frac C{\Hradius^{\frac52+\outve}} \]
implying
\[ \vert\,\ewjac\vert \le \frac C{\Hradius^{\frac52+\outve}}. \]
We know $\vert\fint\efjac\d\mug\vert \le \nicefrac C{\Hradius^{\frac12+\outve}}$ due to \eqref{Stability_mean_part} and therefore get
\[ \vert\,\ewjac\vert - \frac{6\vert\mHaw\vert}{\Hradius^3}\Vert\,\trans\efjac\Vert_{\Lp^2(\M)}^2 \ge \frac3{\Hradius^2} \Vert\,\deform\efjac\Vert_{\Lp^2(\M)}^2 - \frac C{\Hradius^{\frac52+\outve}}\,\Vert\,\deform\efjac\Vert_{\Lp^2(\M)} \]
by using \eqref{Stability__t} and calculating as in \eqref{Stability_deform_part}. Thus, we have proven the first inequality of the second case in \eqref{Stability__ewjac} as the above implies
\[ \Vert\,\deform{\efjac}\Vert_{\Lp^2(\M)} \le \frac C{\Hradius^{\frac12+\outve}}. \]
Therefore, we get \eqref{Stability__ewjac} by
\begin{align*}
 \vert\,\ewjac - \frac{6\mHaw}{\Hradius^3}\vert\,\Vert\,{\trans\efjac}\Vert_{\Lp^2(\M)}^2
	={}& \vert\int(\jacobiext*\efjac-\frac{6\mHaw}{\Hradius^3}\efjac)\,\trans\efjac\d\mug \vert\\
	\le{}& \vert\int\outric*(\nu,\nu)\,\deform{\efjac}\,\trans{\efjac} \d\mug \vert+ \frac C{\Hradius^{3+\outve}} \le \frac C{\Hradius^{3+\outve}},
\end{align*}
where we again used \eqref{Stability__t}.\smallskip

As $(\M,\g*)$ is almost a round sphere due to Proposition \ref{Regularity_of_surfaces_in_asymptotically_flat_spaces}, we can use the $\Wkp^{2,p}(\sphere^2_\sigma(\centerz\,))$-regularity for the Euclidean Laplacian on the Euclidean sphere (the Calderon-Zygmund estimates) to get the claimed $\Wkp^{2,p}(\M)$ inequalities.
\end{proof}
\section{Existence of the CMC-foliation}\label{existence_of_the_CMC-foliation}
In this section, we prove the existence and uniqueness of the CMC-foliation as well as the uniqueness and stability of the leaves of the foliation. First, let us state the existence result.
\begin{theorem}[Existence of a CMC-foliation]\label{Existence_of_a_CMC-foliation}
Let $(\outM,\outg,\outx)$ be a three-di\-men\-sio\-nal $\Ck^2_{\frac12+\ve}$-asymptotically flat Riemannian manifold and assume that its mass is non-vanishing, i.\,e.\ $\mass\neq0$. There is a constant $\Hradius_0=\Cof{\Hradius_0}[|\mass|][\outve][\oc]$, a compact set $\outsymbol L\subseteq\outM$ and a diffeomorphism $\outPhi:\interval{\sigma_0}\infty\times\sphere^2\to\outM\setminus\outsymbol L$ such that each of the surfaces $\M<\Hradius>:=\outPhi(\Hradius,\sphere^2)$ has constant mean curvature $\H<\Hradius>\equiv\nicefrac{{-}2}\Hradius$.
\end{theorem}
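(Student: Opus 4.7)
The strategy is a continuity / inverse function theorem argument, built on the quantitative invertibility of the Jacobi operator $\jacobiext*$ from Proposition \ref{Stability} --- the hypothesis $\mass\neq 0$ is precisely what rules out a near-kernel on the translational subspace. Fix admissible constants $c_0\in(0,1)$, $c_1\ge 0$, $\eta\in(0,1)$, and $p>2$, and set
\[
  \mathcal I := \bigl\{\Hradius\ge\Hradius_0 \,:\, \exists\,\M \in \mathcal A^{\ve,\eta}_\Hradius(c_0,c_1)\text{ with }\H\equiv -2/\Hradius\bigr\}.
\]
The plan is to prove $\mathcal I=[\Hradius_0,\infty)$ by a non-empty / open / closed argument, and then differentiate in $\Hradius$ to upgrade the resulting family into a genuine foliation.

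\textbf{Seed, openness, closedness.} For sufficiently large $\Hradius_*$ the coordinate sphere $\sphere^2_{\Hradius_*}(0)\subseteq\outM$ lies in $\mathcal A^{\ve,\eta}_{\Hradius_*}$ with $\centerz=0$ and, by $\Ck^2_{1/2+\ve}$-asymptotic flatness, satisfies $\Vert\H+2/\Hradius_*\Vert_{\Lp^p}=\mathcal O(\Hradius_*^{-3/2-\ve+2/p})$. Representing nearby surfaces as normal graphs $\graph f$ with $f\in\Wkp^{2,p}$, the map $\mathcal F(f):=\H(\graph f)+2/\Hradius_*$ has linearization $\jacobiext*$ at $f=0$, which Proposition \ref{Stability} makes an isomorphism $\Wkp^{2,p}\to\Lp^p$ with $\Vert(\jacobiext*)^{-1}\Vert$ of size $\mathcal O(\Hradius_*^3/|\mass|)$ on the translational subspace and $\mathcal O(\Hradius_*^2)$ on the deformational one. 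A quantitative IFT (Newton iteration) produces $f_*$ with $\mathcal F(f_*)=0$ and $\graph f_*\in\mathcal A^{\ve,\eta}_{\Hradius_*}(c_0,c_1)$, showing $\mathcal I\neq\emptyset$. The identical IFT applied at any $\Hradius\in\mathcal I$ shows $\mathcal I$ is open. Finally, if $\Hradius_n\to\Hradius_\infty\ge\Hradius_0$ with witnesses $\M_n\in\mathcal A^{\ve,\eta}_{\Hradius_n}(c_0,c_1)$, Proposition \ref{Regularity_of_surfaces_in_asymptotically_flat_spaces} supplies uniform $\Ck^2$ graph representations over coordinate spheres, so Arzel\`a-Ascoli extracts a CMC limit $\M_\infty$ of mean curvature $-2/\Hradius_\infty$ and every defining property of $\mathcal A^{\ve,\eta}$ passes to the limit. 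Hence $\mathcal I=[\Hradius_0,\infty)$ for some $\Hradius_0$ depending only on $|\mass|$, $\ve$, and $\oc$.

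\textbf{From family to foliation.} The IFT makes $\Hradius\mapsto\M<\Hradius>$ smooth in $\Hradius$, so the surfaces assemble into a map $\outPhi:[\Hradius_0,\infty)\times\sphere^2\to\outM$. Differentiating $\H<\Hradius>\equiv -2/\Hradius$ in $\Hradius$ and invoking the first-variation formula for mean curvature, the lapse function $\rnu$ of this deformation satisfies
\[
  \jacobiext*\,\rnu \;=\; -2/\Hradius^2.
\]
Decomposing $\rnu=\trans\rnu+\deform\rnu$: the constant function $1$ has only an $\mathcal O(\Hradius^{-1/2-\ve})$ component on the translational subspace (its eigenfunctions are approximately the linear coordinates on $\sphere^2_\Hradius$ by Proposition \ref{Regularity_of_surfaces_in_asymptotically_flat_spaces}), while $\jacobiext*$ acts on deformational constants as $2/\Hradius^2$ to leading order; Proposition \ref{Stability} then gives $\rnu=-1+\mathcal O(\Hradius^{-1/2-\ve})$ in $\Ck^2$. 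In particular $\rnu$ is of constant nonzero sign for $\Hradius\ge\Hradius_0$, so $\outPhi$ is an immersion, distinct leaves are disjoint, and $\outPhi$ is a diffeomorphism onto $\outM\setminus\outsymbol L$ for the compact set $\outsymbol L$ enclosed by $\M<\Hradius_0>$.

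\textbf{Main obstacle.} The technical heart of the argument is keeping the evolving surfaces inside $\mathcal A^{\ve,\eta}_\Hradius(c_0,c_1)$ throughout the continuation, most delicately the Euclidean-center bound $|\centerz|\le c_0\Hradius+c_1\Hradius^{1-\eta}$. Drift of $\centerz$ lives exactly in the translational subspace where $(\jacobiext*)^{-1}$ has the large norm $\sim\Hradius^3/|\mass|$; only the interplay of this bound with the $\mathcal O(\Hradius^{-3/2-\ve})$ source terms coming from $\Ck^2_{1/2+\ve}$-decay produces a drift controlled by $\Hradius^{1-\eta}$ --- which is exactly where $\mass\neq 0$ is indispensable.
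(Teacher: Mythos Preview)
Your overall architecture (continuity method plus invertibility of $\jacobiext*$ from Proposition~\ref{Stability}) is right, but there is a genuine gap in the translational estimates, and it shows up both in the seed/openness step and in the foliation step.

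Concretely: on the translational subspace, $(\jacobiext*)^{-1}$ has operator norm of order $\Hradius^3/|\mass|$. In your seed step the source is $\H(\sphere^2_{\Hradius_*}(0))+2/\Hradius_*$, whose $\Lp^p$-norm is $\sim\Hradius_*^{-3/2-\ve+2/p}$; the naive Newton correction therefore has $\Wkp^{2,p}$-norm of order $\Hradius_*^{3/2-\ve+2/p}$, i.e.\ $\Lp^\infty$-norm of order $\Hradius_*^{3/2-\ve}$, which is larger than $\Hradius_*$ --- the graph need not even be defined. The same arithmetic undermines your ``Main obstacle'' paragraph: $\Hradius^3\cdot\Hradius^{-3/2-\ve}$ is $\Hradius^{3/2-\ve}$, not $\Hradius^{1-\eta}$. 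In the foliation step, your observation that the constant $2/\Hradius^2$ has a small translational component is correct but insufficient: that component still has $\Lp^2$-norm of order $\Hradius^{-3/2-\ve}$, so $\trans{(\rnu-1)}$ can have $\Lp^\infty$-norm of order $\Hradius^{1/2-\ve}$, which does not yield $\rnu>0$. (There is also a sign slip: $\jacobiext*\rnu=+2/\Hradius^2$, and the conclusion should be $\rnu\approx 1$, not $-1$.)

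What is missing is an extra half-power of decay on $\int(\text{source})\,\nu_i\,\d\mug$. The paper obtains this via integral identities built on the divergence theorem and the second Bianchi identity (Lemmas~\ref{Ric(nu,nu)nu_i-integrals} and~\ref{Estimating_the_center__implicit}), which show that the translational projections of the relevant sources decay like $\Hradius^{-1-\ve}$ rather than $\Hradius^{-1/2-\ve}$. To exploit this structure cleanly, the paper runs the continuity argument not in $\Hradius$ but in an artificial parameter $\atime\in[0,1]$ interpolating between the Schwarzschild metric (where the CMC spheres are explicit coordinate spheres) and $\outg$; the $\atime$-derivative of $\H$ then has the specific divergence form $-\outmomden(\nu)+\div\outzFund_\nu-\trtr\zFund\outzFund$ to which Lemma~\ref{Estimating_the_center__implicit} applies, and the center drifts only by $O(\Hradius^{1-\ve})$ over the unit-length interval $[0,1]$. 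The foliation property is established only afterwards, once the surfaces already exist with $|\centerz|\le C\Hradius^{1-\ve}$, at which point Lemma~\ref{Ric(nu,nu)nu_i-integrals} supplies the needed bound on $\int\outric(\nu,\nu)\,\nu_i\,\d\mug$.
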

Furthermore, our proof of this existence result includes that the inequalities
\[
 \Vert\zFundtrf<\Hradius>\Vert_{\Hk(\M<\Hradius>)} \le \frac C{\Hradius^{\frac32+\outve}}, \quad
 \vert\centerz<\Hradius>\,\vert \le C\,\Hradius^{1-\ve}, \quad
 \vert\ewjac<\Hradius>_j\vert \ge \frac 3{2\Hradius^2}, \qquad\forall\,j\ge4
\]
hold for some constant $C=\Cof[|\mass|][\outve][\oc]$ not depending on $\Hradius>\Hradius_0$, where $\zFundtrf$, $\ewjac<\Hradius>_j$, and $\centerz<\Hradius>$ denote the trace-free part of the second fundamental form, the $j$-the eigenvalue of $\jacobiext*$, and the Euclidean coordinate center
\[ \centerz<\Hradius>_i := \fint_{\M<\Hradius>} \outx_i \d\mug \]
of $\M<\Hradius>$ constructed in Theorem \ref{Existence_of_a_CMC-foliation}, respectively. In particular, combining these inequalities with Proposition \ref{Stability}, we conclude the following stability result for the leaves.
\begin{theorem}[Stability of the foliation]\label{Stability_of_the_foliation}
Let $(\outM,\outg,\outx)$ be a three-di\-men\-sio\-nal $\Ck^2_{\frac12+\ve}$-asymptotically flat Riemannian manifold and assume that its mass is non-vanishing, i.\,e.\ $\mass\neq0$. Furthermore, let $\M\hookrightarrow\outM$ be the leaf of the CMC-foliation constructed in Theorem \ref{Existence_of_a_CMC-foliation} with constant mean curvature $\H\equiv\nicefrac{{-}2}\Hradius$ with $\Hradius>\Hradius_0$. Then there are constants $\c_1=\c_1(|\mass|,\outve,\oc)$ and $C=C(|\mass|,\outve,\oc)$ not depending on $\Hradius$ such that $\M\in\mathcal A^{\outve,\outve}_\Aradius(0,\c_1)$ and
\[ \vert\ewjac<\Hradius>_i-\frac{6\,\mass}{\Hradius^3} \vert \le \frac C{\Hradius^{3+\ve}} \qquad\forall\,i\in\{1,2,3\}, \]
where $\ewjac<\Hradius>_i$ {\normalfont(}$i\in\{1,2,3\}${\hspace{.05em}\normalfont)} denote the three eigenvalues of the {\normalfont(}negative{\hspace{.05em}\normalfont)} stability operator of $\M$ with the smallest absolute value. This surface $\M$ is stable as a CMC-surface if and only if $\mass>0$.
\end{theorem}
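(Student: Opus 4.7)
My plan is in three steps: (i) verify that each leaf $\M<\Hradius>$ lies in the asymptotic class $\mathcal{A}^{\ve,\ve}_{\Aradius}(0,\c_1)$ so that Proposition~\ref{Stability} is applicable, (ii) show $|\mHaw(\M<\Hradius>)-\mass|\le C\Hradius^{-\ve}$, and (iii) read the eigenvalue asymptotics and the stability dichotomy off Proposition~\ref{Stability}.

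\textbf{Step (i).} I exploit the auxiliary bounds attached to Theorem~\ref{Existence_of_a_CMC-foliation}: $|\centerz<\Hradius>|\le C\Hradius^{1-\ve}$ and $\Vert\zFundtrf<\Hradius>\Vert_{\Hk(\M<\Hradius>)}\le C\Hradius^{-\frac32-\ve}$. With $\c_0=0$, $\eta=\ve$, the first two inequalities of \eqref{Not_of-center_ass} are immediate once we observe that Proposition~\ref{Regularity_of_surfaces_in_asymptotically_flat_spaces} forces $\min_{\M<\Hradius>}\rad\ge\Hradius-C\Hradius^{1-\ve}$. For the third inequality I use Gauss--Bonnet together with the Gauss equation for a $2$-surface in a $3$-manifold to obtain
\[ \int_{\M<\Hradius>}\H^2\,d\mug - 16\pi = 4\int_{\M<\Hradius>}\bigl(\outric*(\nu,\nu)-\tfrac12\outsc\bigr)d\mug + 2\int_{\M<\Hradius>}|\zFundtrf|^2\,d\mug, \]
and both summands are $\mathcal{O}(\Hradius^{-1/2-\ve})$ by \eqref{Decay_assumptions_g}, $|\M<\Hradius>|\le C\Hradius^2$, and the pointwise bound on $\zFundtrf$ in Proposition~\ref{Regularity_of_surfaces_in_asymptotically_flat_spaces}.

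\textbf{Step (ii).} Combining the identity above with $|\M<\Hradius>|=4\pi\Aradius^2$ and $\H\equiv-2/\Hradius$ rewrites the Hawking mass as
\[ \mHaw(\M<\Hradius>) = -\frac{\Aradius}{8\pi}\int_{\M<\Hradius>}\!\bigl(\outric*(\nu,\nu)-\tfrac12\outsc\bigr)d\mug \;-\; \frac{\Aradius}{16\pi}\int_{\M<\Hradius>}|\zFundtrf|^2\,d\mug. \]
The second term is $\mathcal{O}(\Hradius^{-2\ve})$. For the first I compare the Ricci-flux integral over $\M<\Hradius>$ with the same integral over a large coordinate sphere $\sphere^2_\rradius(0)$ via the divergence theorem on the annular region between them; the decay $|\outric|,|\outsc|\le C\rad^{-5/2-\ve}$ of \eqref{Decay_assumptions_g} together with $|\centerz<\Hradius>|\le C\Hradius^{1-\ve}$ bound the difference by $C\Hradius^{-\ve}$ uniformly in $\rradius$. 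Passing $\rradius\to\infty$ and invoking the characterization \eqref{Definition_of_mass} (restated in Appendix~\ref{ricci_integrals_and_the_mass}) yields $|\mHaw(\M<\Hradius>)-\mass|\le C\Hradius^{-\ve}$.

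\textbf{Step (iii).} With Step~(i) in hand I apply Proposition~\ref{Stability} on $\M<\Hradius>$. The dichotomy \eqref{Stability__ewjac}, combined with the \emph{a priori} bound $|\ewjac<\Hradius>_j|\ge 3/(2\Hradius^2)$ for $j\ge 4$ from Theorem~\ref{Existence_of_a_CMC-foliation}, forces the three smallest-modulus eigenvalues into the translational alternative, so $|\ewjac<\Hradius>_i - 6\,\mHaw(\M<\Hradius>)/\Hradius^3|\le C\Hradius^{-3-\ve}$ for $i\in\{1,2,3\}$; inserting Step~(ii) produces the eigenvalue statement of the theorem. For the stability claim, recall that a CMC surface is stable iff $-\jacobiext*$ is non-negative on the $\Lp^2$-orthogonal complement of the constants. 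By Lemma~\ref{Eigenvalues_of_the_Laplacian} and Proposition~\ref{Regularity_of_surfaces_in_asymptotically_flat_spaces}, the three translational eigenfunctions are $\Hk^2$-close to first-order spherical harmonics and therefore of almost vanishing mean, while every remaining eigenvalue of $-\jacobiext*$ exceeds $3/(2\Hradius^2)>0$. Consequently the sign of the smallest eigenvalue of $-\jacobiext*$ on the mean-zero subspace coincides with the sign of $6\mass/\Hradius^3$, proving that $\M<\Hradius>$ is stable iff $\mass>0$.

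I expect the main obstacle to be Step~(ii): the pointwise decay in \eqref{Decay_assumptions_g} is only just strong enough for the annular Ricci-flux to converge, and the quantitative comparison between $\mHaw(\M<\Hradius>)$ and $\mass$ for an \emph{off-centered} CMC surface -- with centering error of size $\Hradius^{1-\ve}$ and no symmetry hypothesis on $\outg*$ -- requires the careful accounting of Ricci integrals recalled in Appendix~\ref{ricci_integrals_and_the_mass}.
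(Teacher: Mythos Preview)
Your proposal is correct and follows the paper's approach essentially verbatim. The paper does not give a standalone proof of this theorem; it simply remarks that the auxiliary inequalities obtained during the existence proof (in particular Lemma~\ref{I=J}, which already yields $\M<\Hradius>\in\mathcal A^{\ve,\ve}_\Aradius(0,\c_1')$ directly, so your Step~(i) can be shortened to a citation) combine with Proposition~\ref{Stability} to give the result. Your Step~(ii) makes explicit the one step the paper leaves implicit---the replacement of $\mHaw$ by $\mass$---and your route via the Gau\ss--Bonnet/Gau\ss\ identity and the divergence-theorem comparison is exactly what the paper does in Proposition~\ref{Ric(nu,nu)-integrals} (which the paper invokes for this purpose in the proof of Lemma~\ref{I_is_a_neighborhood_of_J}).
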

Besides these existence and stability results, we get the corresponding uniqueness theorem for the leaves of the foliation within a specific class of CMC-surfaces.
\begin{theorem}[Uniqueness of the CMC-surfaces]\label{Uniqueness_of_the_CMC_surfaces}
Let $\c_0\in\interval*01$, $\c_1\ge0$ and $\eta>0$ be constants and $(\outM,\outg,\outx)$ be a three-dimensional $\Ck^2_{\frac12+\ve}$-asympto\-ti\-cally flat Riemannian manifold and assume that its mass is non-vanishing, i.\,e.\ $\mass\neq0$. There is a constant $\Hradius_0=\Cof{\Hradius_0}[|\mass|][\outve][\oc][\c_0][\c_1][\eta]$ such that all hypersurfaces $\M\in\mathcal A^{\ve,\eta}_\Aradius(\c_0,\c_1)$ of $\outM$ with constant mean curvature $\H\equiv\mean\H(\M)=:\nicefrac{{-}2}\Hradius$ and $\Hradius>\Hradius_0$ coincide. \pagebreak[3]
\end{theorem}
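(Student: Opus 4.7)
My plan is to prove uniqueness by writing one of the two surfaces as a normal graph over the other, deriving the quasilinear elliptic equation satisfied by the graph function $u$, and then using the quantitative invertibility of the stability operator $\jacobiext*$ from Proposition \ref{Stability} to conclude $u\equiv0$. The assumption $\mass\neq0$ is essential: it makes the three translational modes of $\jacobiext*$ have nontrivial eigenvalues $\approx 6\mass/\Hradius^3$, so $\jacobiext*$ is globally invertible with controlled bounds on the whole of $\Wkp^{2,p}(\M)$. Denote the two CMC surfaces by $\M_1,\M_2\in\mathcal A^{\outve,\eta}_\Aradius(\c_0,\c_1)$ and their Euclidean coordinate centers by $\vec a_1$ and $\vec a_2$.

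By Proposition \ref{Regularity_of_surfaces_in_asymptotically_flat_spaces} applied to each $\M_j$, both surfaces are topological spheres, writable as graphs over coordinate spheres $\sphere^2_\Aradius(\vec a_j)$ with graph functions of size $C\Aradius^{\frac12-\outve}$ in $\Wkp^{2,\infty}$; the class constraint $\int\H^2\d\mug\le16\pi+\c_1\Aradius^{-\eta}$ together with $\H\equiv\nicefrac{{-}2}\Hradius$ forces $\Aradius$ to agree with $\Hradius$ up to lower order. The first substantial step is a center-alignment lemma: I would show that $|\vec a_1-\vec a_2|\le C\Hradius^{1-\delta}$ for some $\delta=\delta(\outve,\eta)>0$. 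The idea is to consider the segment of translates $N_t$ of $\M_1$ obtained by shifting in the chart $\outx$ by $t(\vec a_2-\vec a_1)$ for $t\in[0,1]$, to observe that translation in a non-flat metric perturbs the mean curvature by at most $O(|v|\,\|\partial\outg\|_\infty)=O(|v|\Hradius^{-\frac32-\outve})$, and to pair the resulting identity with the three translational eigenfunctions of the stability operator on $\M_1$ supplied by Lemma \ref{Eigenvalues_of_the_Laplacian}. Since the corresponding eigenvalues are $\approx 6\mass/\Hradius^3\neq0$, any center offset of size $|v|$ produces a leading-order mean-curvature defect $\sim(\mass/\Hradius^3)\,|v|$ that is incompatible with both $\M_1,\M_2$ being exactly CMC of the same value unless $|v|\lesssim\Hradius^{1-\delta}$.

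Once the centers are aligned, $\M_2$ is a normal exponential graph over $\M_1$, say $\M_2=\{\exp_x(u(x)\,\nu(x)):x\in\M_1\}$, with $\|u\|_{\Lp^\infty(\M_1)}+\Hradius\,\|\levi*u\|_{\Lp^\infty(\M_1)}+\Hradius^2\,\|\Hess u\|_{\Lp^\infty(\M_1)}=O(\Hradius^{1-\delta})$. Equating the mean curvatures $\H\equiv\nicefrac{{-}2}\Hradius$ on both surfaces and expanding the mean curvature operator in $u$ around $\M_1$ yields
\[
 \jacobiext*u+Q(u,\levi*u,\Hess u)=0,
\]
where $Q$ is at least quadratic in $u$ and its derivatives and satisfies $\|Q(u)\|_{\Lp^p(\M_1)}\le\epsilon(\Hradius)\,\Hradius^{-3}\,\|u\|_{\Wkp^{2,p}(\M_1)}$ with $\epsilon(\Hradius)\to0$ as $\Hradius\to\infty$ (thanks to the smallness of $u/\Hradius$, $\levi*u$, and $\Hradius\Hess u$ in $\Lp^\infty$). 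Applying the $\Wkp^{2,p}$-invertibility of $\jacobiext*$ from Proposition \ref{Stability} (with constant $C\Hradius^3/|\mass|$ on the translational modes, which form the bottleneck), one obtains
\[
 \|u\|_{\Wkp^{2,p}(\M_1)}\le\frac{C\Hradius^3}{|\mass|}\,\|Q(u)\|_{\Lp^p(\M_1)}\le\frac{C\,\epsilon(\Hradius)}{|\mass|}\,\|u\|_{\Wkp^{2,p}(\M_1)}.
\]
Choosing $\Hradius_0$ so large that $C\,\epsilon(\Hradius)<|\mass|$ for $\Hradius>\Hradius_0$ forces $u\equiv0$, hence $\M_1=\M_2$.

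The main obstacle is the center-alignment step: a priori the centers satisfy only $|\vec a_j|\le\c_0\Hradius+\c_1\Hradius^{1-\eta}$, so $|\vec a_1-\vec a_2|$ may be as large as $2\c_0\Hradius$. In this range $\M_1$ and $\M_2$ need not even be mutually normal graphs, and even if they were, the quadratic nonlinearity $Q$ would not be small enough to close any Neumann-type iteration. The technical point that defeats the obstruction is the nontriviality of the translational eigenvalue $\approx 6\mass/\Hradius^3$ — the quantitative expression of the rigidity that, in an asymptotically flat manifold with nonvanishing mass, translating a CMC surface strictly perturbs its mean curvature at leading order. It is this rigidity that converts the wide a priori freedom $|\vec a\,|\le\c_0\Hradius$ permitted by the class $\mathcal A^{\outve,\eta}_\Aradius(\c_0,\c_1)$ into the sharp identification $\M_1=\M_2$.
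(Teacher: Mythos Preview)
Your approach is genuinely different from the paper's, and as written it contains a real gap.

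The paper does \emph{not} compare two CMC surfaces directly. Instead it reuses the continuity machinery from the existence proof: given an arbitrary $\M\in\mathcal A^{\outve,\eta}_\Aradius(\c_0,\c_1)$ with $\H\equiv\nicefrac{{-}2}\Hradius$, it deforms the metric along $\outg[\atime]*=\schwarzoutg*+\atime(\outg*-\schwarzoutg*)$ from $\atime=1$ back to Schwarzschild at $\atime=0$, and shows via the implicit function theorem (invertibility of $\jacobiext*$, Proposition~\ref{Stability}) together with the center control of Lemma~\ref{I=J} that $\M$ deforms to a CMC sphere $\M[0]$ in Schwarzschild, which is necessarily the centered coordinate sphere $\sphere^2_{R(\Hradius)}(0)$. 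Since the $\atime$-flow is uniquely determined by its value at $\atime=0$ (this is the local uniqueness built into the IFT step of Lemma~\ref{I_is_a_neighborhood_of_J}), $\M=\M[1]$ is forced to coincide with the leaf produced by the existence argument.

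Your absorption step does not close under the decay assumptions of this paper. Even granting your center-alignment step with $|\vec a_1-\vec a_2|\le C\Hradius^{1-\delta}$, the graph function satisfies only $\|u\|_{\Wkp^{2,\infty}}\le C\Hradius^{1-\delta}$, and the quadratic remainder scales like $|Q|\lesssim\Hradius^{-3}\|u\|_{\Lp^\infty}\cdot(\text{second-order part of }u)$, so $\|Q\|_{\Lp^p}\lesssim\Hradius^{-3}\cdot\Hradius^{1-\delta}\,\|u\|_{\Wkp^{2,p}}$. Thus your $\epsilon(\Hradius)$ is of order $\Hradius^{1-\delta}$, not $o(1)$, and the inequality $\|u\|\le(C\Hradius^3/|\mass|)\|Q\|\le(C\Hradius^{1-\delta}/|\mass|)\|u\|$ gives no information. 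Splitting into translational and deformational parts does not help either: the deformational estimate $\|\deform u\|\le C\Hradius^2\|Q\|\le C\Hradius^{-\delta}\|u\|$ only reproduces the a~priori bound, while projecting the equation onto the translational modes yields $|\mass|\,\Hradius^{-3}\|\trans u\|_{\Lp^2}\lesssim\|Q\|_{\Lp^2}\lesssim\Hradius^{-3}\,\Hradius^{1-\delta}\|u\|_{\Lp^2}$, which is again vacuous for $\delta<1$. This is exactly why the paper avoids the direct comparison: with only $\Ck^2_{\frac12+\ve}$ decay the graph functions are of size $\Hradius^{\frac12-\outve}$ and the centers are a~priori only $O(\Hradius)$ apart, so the nonlinearity is far too large relative to the tiny translational eigenvalue $\sim\mass/\Hradius^3$. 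The continuity method sidesteps this by moving only infinitesimally at each step, so that the full strength of the inverse bound on $\jacobiext*$ is never tested against a large graph function.
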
%
The proof of the existence and uniqueness theorem have the same structure as Metzger's proof of the same statements in the setting of a $\Ck^2_{1+\outve}$-asymptotically Schwarz\-schildean manifold \cite{metzger2007foliations}. Hence, we briefly explain his proof:

He defines the metrics
\[ \outg[\atime]* := \schwarzoutg* + \atime\,(\outg*-\schwarzoutg*) \quad\forall\,\atime\in\interval*0*1 \qquad\text{with}\quad\schwarzoutg* := (1+\frac\mass{2\rad})^4\,\eukoutg*\]
and proves that the interval $\intervalI\subseteq\interval*0*1$ of all \term*{artificial times} $\atime$ for which the theorem is true (for $\outg[\atime]*$ instead of $\outg*$) is non-empty, open and closed in (and thus equal to) $\interval*0*1$. It is well-known that there exists a (unique) CMC-foliation with respect to the Schwarzschild metric $\schwarzoutg*$, which means $\intervalI\supseteq\{0\}$. Now, he proves that $\intervalI$ is closed by a simple convergence argument and that $\intervalI$ is open by using the implicit function theorem: For every surface $\M[\atime]<\Hradius>$ which has constant mean curvature $\H\equiv\nicefrac{{-}2}\Hradius$ with respect to $\outg[\atime]*$, he defines the mean curvature map
\[ \textsf H : \Wkp^{2,p}(\M)\to\Lp^p(\M) : f \mapsto \H(\graph f), \]
where $\H(\graph f)$ denotes the mean curvature of the graph of $f$ and $p>2$ is arbitrary. Recalling that the Fr\'echet derivative of this map at $f\equiv0$ is the stability operator, he concludes by the inverse function theorem that $\intervalI$ contains a neighborhood of $\atime$ if the stability operator is invertible. Thus, $\intervalI$ is open if the stability operator of every surface $\M<\Hradius>[\atime]$ is invertible. Furthermore, he needs to control the Euclidean coordinate center $\centerz[\atime]<\Hradius>$ in order to use the assumed decay rate of $\outg*-\schwarzoutg*$.\smallskip

We copy the explained proof structure and replace two main arguments: We conclude the invertibility of the stability operator of the surfaces from the arguments used in Proposition \ref{Stability} (instead of using the concrete form of the Ricci curvature in Schwarz\-schild) and control the Euclidean coordinate center by estimating its $\atime$-derivative. For the latter, we use a trick used multiple times in the literature to prove that the ADM- (or CMC-)center of mass is well-defined (under stronger assumptions than we assume here), see for example \cite{Huang__Foliations_by_Stable_Spheres_with_Constant_Mean_Curvature,metzger_eichmair_2012_unique}.

\begin{assumptions}[Existence and regularity intervals]
Let $\Hradius>0$, $\c_0\in\interval*01$, $\c_1>0$ and $\eta\in\interval0*1$ be constants. Assume that $\Phi:\intervalI\times\sphere^2\to\outM$ is a $\Ck^1$-map such that
\begin{enumerate}[nolistsep,label={{\normalfont(\arabic{*})}}]
\item $\intervalI\subseteq\interval*0*1$ is a interval with $0\in\intervalI$;
\item $\M[\atime]:=\Phi(\atime,\sphere^2)$ has constant mean curvature $\H[\atime]\equiv\nicefrac{{-}2}\Hradius$ with respect to $\outg[\atime]*:=\schwarzoutg*+\atime\,(\outg*-\schwarzoutg*)$;
\item $\M[0]=\sphere^2_{\rradius(\Hradius)}(0)$ for the specific radius $\rradius(\Hradius)$ for which $\H[0]<\rradius(\Hradius)>\equiv\nicefrac{{-}2}\Hradius$;
\item $\partial*_\atime\Phi$ is orthogonal to $\M[\atime]$ for any $\atime\in\intervalI$.
\end{enumerate}
Furthermore, let $\intervalI$ be maximal, i.\,e.\ if $\Phi':\intervalI'\times\sphere^2\to\outM$ satisfies the above assumptions for the same $\Hradius$, then $\intervalI'\subseteq\intervalI$.

Let $\intervalJ\subseteq\intervalI$ be the maximal subset such that $\M[\atime]\in\mathcal A^{\ve,\eta}_\Aradius(\c_0,\c_1)$ for every $\atime\in\intervalJ$.\pagebreak[3]
\end{assumptions}
If we choose $\Hradius$ and $\c_1$ sufficiently large and $\eta\in\interval0*1$ sufficiently small then such a $\Phi$ exists for some $\intervalJ\supseteq\{0\}$, because $\outg[0]*$ is the Schwarzschild metric with mass $\mass\neq0$. Now, we first show that $\intervalI$ contains a neighborhood of $\intervalJ$ in $\interval*0*1$, then that $\intervalJ$ is open and closed in $\intervalI$, i.\,e.\ $\intervalJ=\intervalI$. This implies that $\intervalI$ is open in $\interval*0*1$ and a simple convergence argument finishes the proof that $\intervalI=\interval*0*1$.
\begin{lemma}[\texorpdfstring{$\intervalI$}I is a neighborhood of \texorpdfstring{$\intervalJ$}J]\label{I_is_a_neighborhood_of_J}
There is a constant $\Hradius_0=\Cof{\Hradius_0}[|\mass|][\outve][\oc][\c_0][\c_1][\eta]$ such that $\intervalI$ contains a neighborhood of $\intervalJ$ in $\interval*0*1$ if $\Hradius>\Hradius_0$.
\end{lemma}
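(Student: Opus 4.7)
The plan is to apply the implicit function theorem at each $\atime_0 \in \intervalJ$ to the mean curvature map, using the invertibility of the stability operator provided by Proposition \ref{Stability}. Key observation: since the ADM mass depends linearly on the metric perturbation (and both $\schwarzoutg*$ and $\outg*$ have mass $\mass$), the metric $\outg[\atime]*$ has mass $\mass\neq 0$ for every $\atime\in\interval*0*1$. Moreover, since $\outg*-\schwarzoutg*$ inherits the $\Ck^2_{\frac12+\outve}$-decay of $\outg*-\eukoutg*$ (via $\schwarzoutg*-\eukoutg*=\mathcal O_2(\rad^{-1})$), the family $\outg[\atime]*$ is uniformly $\Ck^2_{\frac12+\outve}$-asymptotically flat with uniform constants in $\atime\in\interval*0*1$.

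For $\atime_0\in\intervalJ$, choose $p\in\interval*2\infty$ and consider the map
\[
 \textsf F : W\times B_\delta \subset \interval*0*1 \times \Wkp^{2,p}(\M[\atime_0])\to\Lp^p(\M[\atime_0]),\quad
 \textsf F(\atime,f):=\H^{\atime}(\graphf_f)+\frac2\Hradius,
\]
where $\graphf_f:=\{\exp_x(f(x)\,\nu[\atime_0](x))\}$ is the normal graph over $\M[\atime_0]$ and $\H^\atime$ is computed with respect to $\outg[\atime]*$. Then $\textsf F(\atime_0,0)=0$ by condition~(2), and the partial Fr\'echet derivative at $(\atime_0,0)$ is the stability operator $D_f\textsf F(\atime_0,0)=\jacobiext[\atime_0]*$ of $\M[\atime_0]$ in $(\outM,\outg[\atime_0]*)$. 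Because $\M[\atime_0]\in\mathcal A^{\ve,\eta}_\Aradius(\c_0,\c_1)$ and the Hawking mass of $\M[\atime_0]$ is close to $\mass\neq 0$, Proposition~\ref{Stability} yields a bounded inverse $\jacobiext[\atime_0]^{-1}:\Lp^p(\M[\atime_0])\to\Wkp^{2,p}(\M[\atime_0])$ with norm controlled by $C\,\Hradius^3/\vert\mass\vert$ on the translational part and $C\,\Hradius^2$ on the deformational part, for $\Hradius>\Hradius_0$ with $\Hradius_0$ depending only on the listed constants.

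By the implicit function theorem, there is a neighborhood $W_{\atime_0}$ of $\atime_0$ in $\interval*0*1$ and a $\Ck^1$-curve $\atime\mapsto f_\atime$ with $f_{\atime_0}=0$ such that $\graphf_{f_\atime}$ has constant mean curvature $\nicefrac{{-}2}\Hradius$ with respect to $\outg[\atime]*$. Reparametrizing in the normal direction (so that $\partial*_\atime$ lies along the normal) produces a $\Ck^1$-map $\Phi':W_{\atime_0}\times\sphere^2\to\outM$ satisfying conditions~(2) and~(4). The crucial point is that the radius of convergence from the IFT is controlled by the operator norm of $\jacobiext[\atime_0]^{-1}$ and the Lipschitz constant of $D_f\textsf F$ near $(\atime_0,0)$; both are uniformly bounded for $\atime_0\in\intervalJ$ thanks to the uniform regularity $\M[\atime_0]\in\mathcal A^{\ve,\eta}_\Aradius(\c_0,\c_1)$ (which via Proposition~\ref{Regularity_of_surfaces_in_asymptotically_flat_spaces} gives uniform $\Wkp^{2,\infty}$-graph representations of $\M[\atime_0]$) and the uniform asymptotic flatness of $\outg[\atime]*$. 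Thus $W_{\atime_0}$ can be taken to have a length bounded from below by a constant depending only on $|\mass|$, $\outve$, $\oc$, $\c_0$, $\c_1$, and $\eta$.

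Finally, on $W_{\atime_0}\cap\intervalI$ the original $\Phi$ also gives a family of normal-graph CMC-surfaces over $\M[\atime_0]$ (after possibly shrinking $W_{\atime_0}$ so that each $\M[\atime]$ remains a small normal graph, which is justified by continuity of $\Phi$ and Proposition~\ref{Regularity_of_surfaces_in_asymptotically_flat_spaces}); the local uniqueness clause of the IFT forces it to agree with $\atime\mapsto\graphf_{f_\atime}$ up to tangential reparametrization. Hence $\Phi$ extends as a $\Ck^1$-map to $\intervalI\cup W_{\atime_0}$ still satisfying~(1)--(4), and maximality of $\intervalI$ forces $W_{\atime_0}\subseteq\intervalI$. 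Taking the union over $\atime_0\in\intervalJ$ produces the required neighborhood of $\intervalJ$ in $\intervalI$. The main obstacle, as indicated, is securing the uniform lower bound on $|W_{\atime_0}|$; once uniform invertibility of $\jacobiext[\atime_0]*$ is inherited from Proposition~\ref{Stability} with constants independent of $\atime_0\in\intervalJ$, the rest is a standard IFT argument.
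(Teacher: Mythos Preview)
Your proof is correct and follows essentially the same route as the paper: invoke Proposition~\ref{Stability} to obtain invertibility of the stability operator $\jacobiext[\atime_0]*:\Wkp^{2,p}(\M[\atime_0])\to\Lp^p(\M[\atime_0])$, identify it as the Fr\'echet derivative of the mean curvature map, apply the implicit function theorem, and conclude by maximality of $\intervalI$. Two minor remarks: the uniform lower bound on $|W_{\atime_0}|$ you work out is more than the lemma requires (a neighborhood of each $\atime_0\in\intervalJ$ suffices, since $\intervalJ$ is an interval); and the non-vanishing of the Hawking mass of $\M[\atime_0]$, which you deduce from closeness to $\mass$, is in the paper obtained from the quantitative estimate of Proposition~\ref{Ric(nu,nu)-integrals} rather than from the ADM mass of $\outg[\atime]*$ directly---your linearity observation is correct but you still need the $\Hradius^{-\outve}$ rate to pass from the mass of $(\outM,\outg[\atime]*)$ to $\mHaw(\M[\atime_0])$.
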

\begin{proof}
Let $\atime_0\in\intervalJ$ be arbitrary and suppress the corresponding index. Proposition \ref{Stability} implies that the stability operator $\jacobiext*:\Hk^2(\M)\to\Lp^2(\M)$ of $\M$ is invertible if $\Hradius$ is sufficiently large and the Hawking mass om $\M$ does not vanish. We prove the latter in Lemma \ref{Ric(nu,nu)-integrals} and assume that $\Hradius$ is sufficiently large. Thus, if we fix $p\in\interval2\infty$, then the $\Wkp^{2,p}(\M)$-regularity of the stability operator implies that the restriction $\jacobiext*:\Wkp^{2,p}(\M)\to\Lp^p(\M)$ is invertible. This (invertible) operator is the Fr\'echet derivative of the mean curvature map
\[ \textsf H : \interval*0*1\times\Wkp^{2,p}(\M) \to \Lp^p(\M) : (\atime,f)\mapsto \H[\atime](\graph f) \]
with respect to the second component at $(\atime,f)\equiv(\atime_0,0)$, where $\M[\atime](\graph f)$ denotes the mean curvature of the graph of $f$ with respect to the metric $\outg[\atime]*$. This map is well-defined for $f$ with sufficiently small $\Vert f\Vert_{\Wkp^{2,p}(\M)}$ due to Proposition \ref{Regularity_of_surfaces_in_asymptotically_flat_spaces}. Thus, the implicit function theorem implies that there is a $\eta>0$ and a $\Ck^1$-map $\gamma:\interval{\atime_0-\eta}{\atime_0+\eta}\to\Wkp^{2,p}(\M)$ such that $\textsf H(\atime,\gamma(\atime))=\textsf H(\atime,\gamma(0))\equiv\nicefrac{{-}2}\Hradius$ and that this map is uniquely defined by this property -- at least within a neighborhood of $0\in\Wkp^{2,p}(\M)$. In particular, we can extend $\Phi$ to a neighborhood of $\atime_0$. Hence, the assumed maximality of $\intervalI$ implies that $\intervalI$ contains a neighborhood of $\atime_0$.%
\end{proof}%
Analyzing the proof of this lemma, we see furthermore that $\Phi$ is uniquely defined by the assumed four properties -- at least in a neighborhood of $\intervalJ$. Furthermore, we see that $\outx\circ\Phi$ is differentiable as a map from $\intervalI$ to $\Wkp^{2,p}(\sphere^2;\R^3)$.

In order to prove that $\intervalI$ is open, we have to show that all surfaces $\M[\atime]$ satisfy the assumptions of \ref{Stability}, i.\,e.\ that $\intervalI=\intervalJ$. To do so, we use again a open-closed argument. We find that $\intervalJ$ is closed as all assumptions on $\atime\in\intervalJ$ are closed assumptions (non-strict inequalities) and the corresponding quantities depend continuously on $\Phi$ due to the differentiability of $\Phi$ explained in the last paragraph. Thus, we only have to prove that $\intervalJ$ is open within $\intervalI$.
\begin{lemma}[\texorpdfstring{$\intervalJ=\intervalI$}{J=I}]\label{I=J}
There are constants $\Hradius_0=\Cof{\Hradius_0}[|\mass|][\outve][\oc]$ and ${\c_1}'=\Cof{{\c_1}'}[|\mass|][\outve][\oc]$ such that $\intervalJ=\intervalI$ if $\Hradius>\Hradius_0$, $\c_1\ge{\c_1}'$, and $\eta\le\ve$, i.\,e.\ $\M[\atime]\in\mathcal A^{\outve,\outve}_\Aradius(0,{\c_1}')$ for every $\atime\in\intervalI$ if $\sigma>\sigma_0$.
\end{lemma}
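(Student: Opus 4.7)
The plan is to show that $\intervalJ$ is open in $\intervalI$; combined with the closedness already observed, this forces $\intervalJ=\intervalI$. I fix $\atime_0\in\intervalJ$ and work locally around it. Since $\M[\atime]\in\mathcal A^{\ve,\eta}_\Aradius(\c_0,\c_1)$ for all $\atime\in\intervalJ$, Propositions \ref{Regularity_of_surfaces_in_asymptotically_flat_spaces} and \ref{Stability} and Lemma \ref{Compatibility_of_area_and_mean_curvature} give sharp estimates on $\zFundtrf[\atime]$, on the eigenstructure of $\jacobiext[\atime]*$, and identify each $\M[\atime]$ with a small graph over a Euclidean sphere around its coordinate center $\centerz[\atime]$. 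My aim is to upgrade the triple $(\c_0,\c_1,\eta)$ at $\atime_0$ to $(0,\c_1',\outve)$ with $\c_1'=\Cof{\c_1'}[|\mass|][\outve][\oc]$ independent of the interval. The third defining condition of $\mathcal A^{\outve,\outve}_\Aradius$ follows from the Gau\ss-Bonnet identity together with the Gau\ss{} equation,
\[ \int_{\M[\atime]}\H[\atime]^2\d\mug[\atime] - 16\pi(1-\genus) = 2\int_{\M[\atime]}|\zFundtrf[\atime]|^2\d\mug[\atime] - 2\int_{\M[\atime]}\bigl(\outsc[\atime]-2\outric[\atime]*(\nu[\atime],\nu[\atime])\bigr)\d\mug[\atime], \]
whose right-hand side is $O(\Hradius^{-1})$ by the uniform $\Ck^2_{\frac12+\outve}$-decay of $\outg[\atime]*$ and the known estimate on $\zFundtrf[\atime]$. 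The $\min\rad$-condition then follows once the center is controlled.

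The crux is condition (1), namely $|\centerz[\atime]|\le\c_1'\Hradius^{1-\outve}$ uniformly in $\atime\in\intervalI$. Since $\centerz[0]=0$, it suffices to estimate $\partial_\atime\centerz[\atime]$. By property (4), $\partial_\atime\Phi=\rnu[\atime]\,\nu[\atime]$ for some lapse function $\rnu[\atime]$. Differentiating $\H[\atime](\M[\atime])\equiv\nicefrac{{-}2}\Hradius$ in $\atime$ and combining the normal-displacement variation of $H$ with its variation under the metric change $\partial_\atime\outg[\atime]*=\outg*-\schwarzoutg*$ yields a linear equation
\[ \jacobiext[\atime]*\rnu[\atime]=\Psi[\atime], \]
where $\Psi[\atime]$ encodes the first-order change of $H$ from the metric variation at fixed embedding and satisfies $\Vert\Psi[\atime]\Vert_{\Lp^p(\M[\atime])}\le C\Hradius^{-\frac32-\outve+\frac2p}$ because $\outg*-\schwarzoutg*$ and its first tangential derivative are $\mathcal O(\rad^{-\frac12-\outve})$ and $\mathcal O(\rad^{-\frac32-\outve})$ on $\M[\atime]$, respectively. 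Proposition \ref{Stability} controls the deformational part at the acceptable level $\Vert\deform{\rnu[\atime]}\Vert_{\Wkp^{2,p}(\M[\atime])}\le C\Hradius^{\frac12-\outve+\frac2p}$, and its contribution to $\partial_\atime\centerz[\atime]$ is harmless. The translational part, however, is multiplied by the extra factor $\Hradius^3/|\mass|$ coming from the near-kernel eigenvalues $\approx 6\mass/\Hradius^3$, so a naive estimate gives a bound on $\partial_\atime\centerz[\atime]$ that is too weak by more than one power of $\Hradius$.

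The main obstacle, and where the \lq center-of-mass trick\rq\ of \cite{Huang__Foliations_by_Stable_Spheres_with_Constant_Mean_Curvature,metzger_eichmair_2012_unique} enters, is to show that the pairing of $\Psi[\atime]$ with the translational eigenfunctions $f_i\approx\sqrt{3/(4\pi\Hradius^4)}(\outx_i-\centerz[\atime]_i)$ of $-\laplace[\atime]*$ (supplied by Lemma \ref{Eigenvalues_of_the_Laplacian}) is dramatically smaller than the naive $\Lp^2$-bound suggests. I achieve this by integration by parts on $\M[\atime]$, moving derivatives off $\outg*-\schwarzoutg*$ onto the coordinate functions $\outx_i$; the resulting bulk integrals are of the ADM-type used to define mass and center of mass and are uniformly bounded in $\Hradius$ rather than growing with it. This gain compensates the eigenvalue loss and yields an improved bound on $\partial_\atime\centerz[\atime]$ that, integrated over $\atime$, gives $|\centerz[\atime]|\ll\Hradius^{1-\outve}$. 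All three defining conditions of $\mathcal A^{\outve,\outve}_\Aradius(0,\c_1')$ then hold strictly at $\atime_0$ with a uniform constant $\c_1'$; by continuity (cf.\ Lemma \ref{I_is_a_neighborhood_of_J}) they persist in a neighborhood, so $\intervalJ$ is open in $\intervalI$, completing the proof.
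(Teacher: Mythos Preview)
Your proposal is correct and follows essentially the same strategy as the paper: differentiate the CMC condition in $\atime$ to obtain the lapse equation, split $\rnu$ into deformational and translational parts via Proposition~\ref{Stability}, and control the translational pairing via the center-of-mass trick (which the paper isolates as the technical Lemma~\ref{Estimating_the_center__implicit}). The only minor deviations are that the paper bounds the $\atime$-derivative of $\int\H^2\d\mug$ rather than invoking Gau\ss--Bonnet directly for the third condition, and that the trick itself is executed via the three-dimensional divergence theorem (comparing $\M$ to coordinate spheres and using that $\eukoutdiv\eukoutdiv\momentum*$ is essentially $\outsc$) rather than by integration by parts on $\M$.
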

\begin{proof}
Fix $\atime_0\in\intervalJ$ and suppress the corresponding index. As explained aboveby the continuity of $\Phi$, we can assume that there is a neighborhood of $\atime_0$ in $\intervalI$ such that $\M[\atime]$ satisfies the assumptions of Proposition \ref{Regularity_of_surfaces_in_asymptotically_flat_spaces} for altered constants $\c_0'=\nicefrac{(1+\c_0)}2$, $\c_1'=2\c_1$, and $\eta'=\nicefrac\eta2$. Thus, it is sufficient to prove that any surfaces satisfying these assumptions for \emph{some} constants $\c_1'$, $\c_2'$ and $\eta'$ satisfy these for specific constants $\c_1$, $\c_2$ and $\eta$ only depending on $\outve$ and $\oc$. To prove this, we show estimates for the derivatives of the quantities controlled by these constants.

Denote by $\rnu$ the lapse function of $\Phi$, i\,e.
\[ \rnu := \outg[\atime_0]*(\partial[\atime]<\atime=\atime_0>@\Phi,\nu), \]
where $\nu$ is the outer unit normal of $\M\hookrightarrow(\outM,\outg[\atime_0]*)$. We see that
\[ 0 \equiv \partial[\atime]<\atime=\atime_0>@{(\H[\atime])} = \partial[\atime]<\atime=\atime_0>@{(\H[\atime_0](\M[\atime]))} + \partial[\atime]<\atime=\atime_0>@{(\H[\atime](\M[\atime_0]))} = \jacobiext*\rnu + \partial[\atime]<\atime=\atime_0>@{(\H[\atime](\M[\atime_0]))}, \]
where $\H[\atime](\M')$ denotes the mean curvature of any hypersurface $\M'\hookrightarrow\outM$ with respect to the metric $\outg[\atime]*$. This means for $2\,\outzFund*:=\schwarzoutg*-\outg*$ and $\outmomden*:=\outdiv(\outtr\,\outzFund-\outzFund*)$ that
\[ \jacobiext*\rnu = {-}\partial[\atime]<\atime=\atime_0>@{\H[\atime](\M[\atime_0])} = {-}\outmomden*(\nu) + \div\outzFund_\nu - \trtr\zFund\outzFund, \]
see \cite[Prop.~3.7]{nerz2013timeevolutionofCMC} for $\uniM:=\interval*0*1\times\outM$ and $\unig:={-}\d\atime^2+\outg[\atime]*$.\footnote{The artificial quantities $\outzFund*$ and $\outmomden*$ are actually the second fundamental form and the momentum density of $\{\atime_0\}\times\outM\hookrightarrow(\uniM,\unig)$.}\pagebreak[1] In particular, the assumptions on $\outg*$ imply that
\[ \Vert\jacobiext*\rnu\Vert_{\Lp^\infty(\M)} \le \frac C{\Hradius^{\frac32+\outve}}. \]
Thus, Proposition \ref{Stability} and the regularity of the Laplacian ensure that there is a constant $C$ such that
\[ \Vert\deform\rnu\Vert_{\Wkp^{2,p}(\M)} + \frac1\Hradius\Vert\trans\rnu\Vert_{\Wkp^{2,p}(\M)} \le C\,\Hradius^{\frac12-\outve+\frac p2} \qquad\forall\,p<\infty. \]
In particular, we conclude that
\[ \vert\partial[\atime]@{\volume{\M[\atime]}}\vert = \vert\H\int_{\M}\rnu\d\mug\vert \le C\,\Hradius^{\frac32-\outve}, \qquad \vert\partial[\atime](\int_{M<\atime>}\H^2\d\mug)\vert \le \frac C{\Hradius^{\frac12+\outve}}, \]
because $\int\trans\rnu\d\mug=0$. In particular, the derivative of $\int\H^2\d\mug$ is controlled sufficiently and we only have to prove $\vert\partial*_\atime{(\outx_i\circ\Phi)}\vert\le C\,\Hradius^{1-\outve}$. Hence, it is sufficient to verify
\begin{equation*} \Vert\trans\rnu\Vert_{\Hk^2(\M)} \le C\,\Hradius^{2-\outve}. \labeleq{I=J_transrnu}\end{equation*}
Let $\{\eflap_i\}_{i=0}^\infty$ again denote a complete $\Lp^2$-orthonormal system of eigenfunctions of the (negative) Laplacian with corresponding eigenvalues $\ewlap_i$. Per definition of $\trans\rnu$, we get \eqref{I=J_transrnu} if we validate
\[ \vert\int\rnu\,\eflap_i \d\mug\vert \le C\,\Hradius^{2-\ve} \qquad\forall\,i\in\{1,2,3\}. \]
Using Proposition \ref{Stability}, this is equivalent to prove
\[ \frac C{\Hradius^{1+\ve}} \ge \vert\int\rnu\;\jacobiext\eflap_i \d\mug\vert = \vert\int(\outmomden*(\nu) - \div\outzFund_\nu + \trtr\zFund\outzFund)\,\eflap_i\d\mug\vert. \]
Now, we use Proposition \ref{Regularity_of_surfaces_in_asymptotically_flat_spaces} and see that
\[ \Vert\deform{\nu_i}\Vert_{\Hk^2(\M)} \le \frac C{\Hradius^{\frac12+\ve}}\Vert\nu_i\Vert_{\Lp^2(\M)} \]
by comparing with the Euclidean sphere. Thus, we can replace $\eflap_i$ by $\nu_i$ and only have to show that
\[ \frac C{\Hradius^{1+\ve}} \ge \vert\int(\outmomden*(\nu) - \div\outzFund_\nu + \trtr\zFund\outzFund)\,\nu_i\d\mug\vert. \]
This is a technical calculation done in Lemma \ref{Estimating_the_center__implicit}. Hence, we conclude that
\[ \vert\partial[\atime](\int\H^2\d\mug)\vert \le \frac C{\Hradius^{\frac12+\outve}}, \qquad\vert\partial[\atime]@{(\outx_i\circ\Phi)}\vert\le C\,\Hradius^{1-\outve}. \]
By the arguments explained at the beginning of this proof, we get that $\intervalJ$ is open in $\intervalI$. As explained before this lemma, this implies that $\intervalI=\intervalJ$.
\end{proof}
Now, we can finally prove that $\intervalI=\interval*0*1$. In particular, there exists a surface $\M<\Hradius>$ with constant mean curvature $\H<\Hradius>\equiv\nicefrac{{-}2}\Hradius$ with respect to $\outg*$.
\begin{lemma}[\texorpdfstring{$\intervalI=\interval*0*1$}{I=[0,1]}]\label{I=[0;1]}
There is a constant $\Hradius_0=\Cof{\Hradius_0}[|\mass|][\outve][\oc]$ such that $\intervalI=\interval*0*1$ if $\Hradius>\Hradius_0$.
\end{lemma}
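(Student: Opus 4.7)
The plan is a standard open–closed argument on the connected interval $\interval*0*1$. First, $\intervalI$ is nonempty because $0 \in \intervalI$: at $\atime = 0$ the metric $\outg[0]*$ reduces to the Schwarzschild metric $\schwarzoutg*$, which admits a CMC foliation by the round coordinate spheres, and the chosen $\sphere^2_{\rradius(\Hradius)}(0)$ realizes the required mean curvature. Next, Lemma \ref{I_is_a_neighborhood_of_J} combined with Lemma \ref{I=J} (which upgrades $\intervalI$ to $\intervalJ$ and hence provides the hypotheses for the implicit function theorem argument at every $\atime \in \intervalI$) shows that $\intervalI$ is open in $\interval*0*1$ whenever $\Hradius$ exceeds the constant from those lemmas. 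It therefore suffices to verify that $\intervalI$ is closed in $\interval*0*1$; by connectedness of $\interval*0*1$ this forces $\intervalI = \interval*0*1$.

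For closedness, I would take a sequence $\atime_n \in \intervalI$ converging to some $\atime_\infty \in \interval*0*1$. Lemma \ref{I=J} guarantees that each $\M[\atime_n]$ lies in $\mathcal A^{\outve,\outve}_\Aradius(0,{\c_1}')$ with constants uniform in $n$, so Proposition \ref{Regularity_of_surfaces_in_asymptotically_flat_spaces} represents each $\M[\atime_n]$ as a $\Ck^2$-graph $\graph f_n$ over a Euclidean sphere $\sphere^2_\Aradius(\centerz_n)$ with $\Vert f_n\Vert_{\Wkp^{2,\infty}} \le C\,\Aradius^{\frac12 - \outve}$ and $\vert\centerz_n\vert \le C\,\Aradius^{1-\outve}$, all bounds independent of $n$. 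Extracting a subsequence via Arzelà–Ascoli yields $\centerz_n \to \centerz_\infty$ and $f_n \to f_\infty$ in $\Ck^1$, producing a limit graph $\M[\atime_\infty] := \graph f_\infty$. Because $\outg[\atime_n]* \to \outg[\atime_\infty]*$ smoothly (the family is affine in $\atime$) and the mean curvature depends continuously on both the ambient metric and the graph function in these norms, the limit surface satisfies $\H[\atime_\infty] \equiv \nicefrac{{-}2}\Hradius$ with respect to $\outg[\atime_\infty]*$.

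To actually include $\atime_\infty$ in $\intervalI$, I would extend the parametrization $\Phi$ continuously through $\atime_\infty$ while preserving its four defining properties. The uniform lapse bound on $\partial*_\atime\Phi$ in $\Wkp^{2,p}(\M[\atime])$ established inside the proof of Lemma \ref{I=J} makes $\atime \mapsto \Phi(\atime,\cdot)$ uniformly Lipschitz as a map from $\intervalI$ into $\Wkp^{2,p}(\sphere^2;\outM)$, so it extends continuously to $\atime_\infty$; the orthogonality (4) persists under this limit because the normal direction depends continuously on $(\atime, f_\atime)$, and the extended map satisfies (1)–(3) by the convergence in the previous paragraph. Maximality of $\intervalI$ then forces $\atime_\infty \in \intervalI$, completing the closedness step.

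The main obstacle is precisely this closedness argument, and the decisive input is Lemma \ref{I=J}: without the a priori identification $\intervalI = \intervalJ$ one could neither extract a convergent subsequence of the graphs $f_n$ nor control the Euclidean centers $\centerz_n$ as $\atime_n \to \atime_\infty$, and the limit surface could drift to infinity or fail the concentricity condition. Once that uniform geometric control is in hand, both compactness and the extension of $\Phi$ are routine.
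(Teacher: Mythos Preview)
Your proof is essentially correct and follows the same open--closed strategy as the paper: nonemptiness from the Schwarzschild case, openness from Lemmata~\ref{I_is_a_neighborhood_of_J} and~\ref{I=J}, and closedness from the uniform Lipschitz bound on $\atime\mapsto\Phi(\atime,\cdot)$ coming out of Lemma~\ref{I=J}. The paper argues closedness slightly more directly by setting $\atime_0:=\sup\intervalI$ and extending $\Phi$ as a Lipschitz map into $\Wkp^{1,p}(\sphere^2_\Hradius;\R^3)$ (not $\Wkp^{2,p}$ --- the normal costs one derivative), then reading off the metric and second fundamental form at the endpoint; your Arzel\`a--Ascoli step on the graph functions is redundant once you invoke the Lipschitz extension of $\Phi$ itself. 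One small imprecision: $\Ck^1$ convergence of the $f_n$ does not literally make the mean curvature continuous, since $\H$ is second order; you need weak-$*$ convergence in $\Wkp^{2,\infty}$ (which you have from the uniform bound) to pass the CMC equation to the limit. This is routine and does not affect the validity of your argument.
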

\begin{proof}
Define $\atime_0:=\sup\intervalI$. By the argument of proof of Lemma \ref{I=J}, we know that for any $\atime\in\intervalI$ and $p\in\interval1\infty$
\[ \Vert\partial[\atime]@{(\outx\circ\Phi)}\Vert_{\Wkp^{1,p}(\sphere^2_\Hradius;\R^3)} = \Vert\rnu[\atime]\;\outx_*(\nu[\atime])\Vert_{\Wkp^{1,p}(\sphere^2_\Hradius;\R^3)} \le C\,\Hradius^{\frac32}. \]
This means that $\Phi:\intervalI\to\Wkp^{1,p}(\sphere^2_\Hradius;\R^3)$ is Lipschitz continuous. As every Lipschitz continuous function on an interval can continuously extended to the closure of the interval, this means that $\Phi$ can be extended to\footnote{or in case of $\atime_0\in\intervalI$ is} a Lipschitz continouse map $\Phi':\interval*0*{\atime_0}\to\Wkp^{1,p}(\sphere^2_\Hradius;\R^3)$. In particular, $\outg*$ induces a well-defined metric $\g[\atime_0\,]*$ on $\M[\atime_0]:=\Phi'(\atime_0,\sphere^2_\Hradius)$, i.\,e.\ $\Phi'(\atime_0)^*\,\g[\atime_0\,]*\in\Lp^2(\sphere^2)$ is a well-defined metric on $\sphere^2$. With
\[ \Vert\partial[\atime]@{(\Phi(\atime)^*\,\g[\atime]*)}\Vert_{\Hk(\sphere^2_\Hradius)} = \Vert\rnu[\atime]\;\zFund*\Vert_{\Hk(\sphere^2_\Hradius)} \le C\,\Hradius^{1-\ve}, \]
we see that $\Phi'(\atime_0)^*\,\g[\atime_0\,]*\in\Hk(\sphere^2)$ implying that the second fundamental form of $\M[\atime_0]$ is well-defined and $\M[\atime_0]$ has constant mean curvature $\H[\atime_0]\equiv\nicefrac{{-}2}\Hradius$ with respect to $\outg[\atime_0]*$. All in all, we get $\M[\atime_0]\in\mathcal A^{\outve,\outve}_\Aradius(\c_0',\c_1')$ for the constants $\c_0'=\Cof{\c_0'}[|\mass|][\outve][\oc]$ and $\c_1'=\Cof{\c_1'}[|\mass|][\outve][\oc]$ from Lemma \ref{I=J}. Using the same arguments as in Lemma \ref{I_is_a_neighborhood_of_J}, we conclude that $\Phi'$ is differentiable in $\atime_0$. By the maximality of $\intervalI$, this proves that $\atime_0\in\intervalI$.

All in all, the above arguments prove that $\intervalI$ is closed and the Lemmata \ref{I_is_a_neighborhood_of_J} and \ref{I=J} imply that $\intervalI$ is open in $\interval*0*1$. Thus, $\intervalI=\interval*0*1$.
\end{proof}
As we will use the uniqueness of the CMC-leaves in order to prove that they foliate $\outM$, let us first prove the uniqueness of these surfaces.
\begin{proof}[Proof of Theorem \ref{Uniqueness_of_the_CMC_surfaces}]
Assume that $\Phi:\intervalI\times\sphere^2\to\outM$ is a $\Ck^1$-map such that
\begin{enumerate}
\item $\intervalI\subseteq\interval*0*1$ with $0\in\intervalI$;
\item $\M[\atime]:=\Phi(\atime,\sphere^2)$ has constant mean curvature $\H[\atime]\equiv\nicefrac{{-}2}\Hradius$ with respect to $\outg[\atime]*:=\schwarzoutg*+\atime\,(\outg*-\schwarzoutg*)$;
\item $\M[1]=\M$,
\item $\partial*_\atime{\Phi}$ is orthogonal to $\M[\atime]$ for any $\atime\in\intervalI$,
\end{enumerate}
Furthermore, let $\Phi$ be maximal, i.\,e.\ if $\Phi':\intervalI'\times\sphere^2\to\outM$ satisfies the above assumptions for the same $\Hradius$, then $\intervalI'\subseteq\intervalI$. With the same arguments as above, we conclude that $\intervalI=\interval*0*1$ and $\M[\atime]\in\mathcal A^{\outve,\outve}_\Aradius({\c_0}',{\c_1}')$ for some constants $\c_0\in\interval*01$ and $\c_1\ge0$ and every $\atime\in\intervalI'$. Thus, $\M[0]$ has constant mean curvature $\nicefrac{{-}2}\Hradius$ with respect to the Schwarzschild metric $\outg[0]*$. This implies $\M[0]=\sphere^2_{\rradius}(0)$. We explained below Lemma \ref{I_is_a_neighborhood_of_J} that $\Phi$ is uniquely defined by $\Phi(0,\cdot)$. Thus $\Phi(1,\sphere^2)=\M$ is uniquely defined by $\H$.
\end{proof}
\begin{proof}[Proof of Theorem \ref{Existence_of_a_CMC-foliation}]
By Lemma \ref{I=[0;1]}, there is a constant $\Hradius_0$ and a map $\Phi : \interval*0*1\times\interval{\Hradius_0}\infty\times\sphere^2\to\outM$ such that $\M[\atime]<\Hradius>:=\Phi(\atime,\Hradius,\sphere^2)$ has constant mean curvature $\H[\atime]<\Hradius>\equiv\nicefrac{{-}2}\Hradius$ with respect to $\outg[\atime]*$. Furthermore, there is a constant $\c_1=\Cof{\c_1}[|\mass|][\outve][\oc]$ such that $\M[\atime]<\sigma>\in\mathcal A^{\outve,\outve}_\Aradius(0,{\c_1})$ for every $\atime\in\interval*0*1$ and $\sigma>\sigma_0$ due to Lemma \ref{I=J}. In particular, the stability operator is invertible and an argument as in Lemma \ref{I_is_a_neighborhood_of_J} ensures that we can choose $\Phi$ to be continuously differentiable, when we keep the uniqueness (Theorem \ref{Uniqueness_of_the_CMC_surfaces}) in mind.

The only thing left to prove is the foliation property of $\Phi[\atime]:=\Phi(\atime,\cdot,\cdot)$. Let $\rnu:=\outg*(\spartial[\sigma]@\Phi,\nu)$ denote the lapse function in $\Hradius$-direction. In particular, the foliation property holds if $\Vert \rnu-1\Vert_{\Hk^2(\M)}\le C\,\Hradius^{1-\outve}$. As in the proof of Lemma \ref{I=J}, we know that
\[ \jacobiext*(\rnu-1) = \partial[\sigma]@{\H[\atime]<\Hradius>} - \outric*(\nu,\nu)-\trtr\zFund\zFund = {-}\outric*(\nu,\nu) - \trtr\zFundtrf\zFundtrf. \]
By Proposition \ref{Regularity_of_surfaces_in_asymptotically_flat_spaces}, this implies $\vert\jacobiext*(\rnu-1) + \outric*(\nu,\nu) \vert \le \nicefrac C{\Hradius^{3+\ve}}$.
Again with the same arguments as in the proof of Lemma \ref{I=J}, it is sufficient to prove
\[ \vert\int\outric*(\nu,\nu)\,\nu_i \d\mug \vert \le \frac C{\Hradius^{1+\ve}} \]
to get the foliation property. This is a technical calculation done in Lemma \ref{Ric(nu,nu)nu_i-integrals}.
\end{proof}

\section{The center of mass}\label{the_centers_of_mass}
Huisken-Yau defined the \term{CMC-center of mass} by using the CMC-foliation. As explained in the introduction, there are other definitions of center of mass, as the one defined by Regge-Teitel\-boim \cite{regge1974role} and Beig-{\'O}\,Murchadha \cite{beig1987poincare} -- as this is defined as ADM-type of expression, we call it ADM-center of mass.
\begin{definition}[ADM- and CMC-center of mass]
For any asymptotically flat Riemannian manifold $(\outM,\outg*,\outx)$ the \term{ADM-center of mass} is defined by
\[ \outcenterz_{\text{ADM}}^i := \frac1{16\pi m}\lim_{\rradius\to\infty}\int_{\sphere^2_\rradius(0)} \sum_{j=1}^3 (\outx^i(\partial[\outx]_\oj@{\outg_{\oj\ok}}-\partial[\outx]_\ok@{\outg_{\oj\oj}})\outx^\ok-(\outg_\oj^\oi\hspace{.05em}\frac{\outx^\oj}\rradius-\outg_{\oj\oj}\hspace{.05em}\frac{\outx^\oi}\rradius)) \d\eukmug \]
if this limit exists.

If $(\outM,\outg*,\outx)$ additionally posses a CMC-foliation $\{\M<\Hradius>\}_{\Hradius>\Hradius_0}$ such that the mean curvature of the leaf $\M<\Hradius>$ is $\nicefrac{{-}2}\Hradius$, then the \term{CMC-center of mass} is defined by
\[ \outcenterz_{\text{CMC}} := \lim_{\Hradius\to\infty}\centerz<\Hradius>, \qquad \centerz<\Hradius>^i := \fint_{\M<\Hradius>} \outx^i \d\mug \]
if this limit exists, where $\M<\Hradius>$ is the CMC-leaf with mean curvature $\nicefrac{{-}2}\Hradius$ of the unique CMC-foliation. Cederbaum and the other constructed examples proving that this center is not well-defined for every $\mathcal C^2_{\frac12+\outve}$-asymptotically flat manifold \cite{cederbaumnerz2013_examples}.
\end{definition}
It was proven that under different assumptions these centers coincide, see among others \cite{Corvino__On_the_center_of_mass_of_isolated_systems,huang2009center,Huang__Foliations_by_Stable_Spheres_with_Constant_Mean_Curvature,metzger_eichmair_2012_unique,nerz2013timeevolutionofCMC}. In this section, we prove the same result under our assumptions, but (like the cited results) we need an asymptotic symmetry condition on metric and scalar curvature: the Regge-Teitel\-boim conditions \cite{regge1974role}.
\begin{definition}[Regge-Teitel\-boim conditions]\label{Regge-Teitelboim_conditions}
Let $(\outM,\outg*,\outx)$ be a $\Ck^2_{\frac12+\ve}$-as\-ymp\-to\-ti\-cally flat Riemannian manifold. It satisfies the \term{$\Ck^2_{1+\outve}$-Regge-Teitel\-boim conditions} if
\begin{align*}
 \,\!&\,\!\vert\outg_{ij}(\outx)-\outg_{ij}({-}\outx)\vert + \rad\,\vert\outlevi_{ij}^k(\outx)+\outlevi_{ij}^k({-}\outx)\vert \\ &\,+ \rad^2\,\vert\outric_{ij}(\outx)-\outric_{ij}({-}\outx)\vert + \rad^{\frac52}\,\vert\outsc(\outx)-\outsc({-}\outx)\vert \le\frac\oc{\rad^{1+\outve}}, \labeleq{RTC_g}
\end{align*}
for any $i,j,k\in\{1,2,3\}$. It satisfies the \term{$\Ck^2_{\frac32+\outve}$-Regge-Teitel\-boim conditions} if \eqref{RTC_g} is satisfied for $\nicefrac\oc{\rad^{\frac32+\outve}}$ instead of $\nicefrac\c{\rad^{1+\outve}}$.
\end{definition}
In the cited (and most of the) literature, the (original) $\Ck^2_{\frac32+\outve}$-Regge-Teitel\-boim conditions are used. However, we allow the more general $\Ck^2_{1+\outve}$ conditions to get a more general result.\pagebreak[3]

Let us state the main theorem of this section.
\begin{theorem}[The centers of mass]\label{The_centers_of_mass}
Let $(\outM,\outg,\outx)$ be a three-dimensional $\Ck^2_{\frac12+\ve}$-asymptotically\vspace{-.2em} flat Riemannian manifold and assume that its mass is non-vanishing, i.\,e.\ $\mass\neq0$. If $(\outM,\outg*,\outx)$ satisfies the $\Ck^2_{1+\outve}$\!-Regge-Teitel\-boim conditions, then the CMC-center of mass is well-defined if and only if the ADM-center of mass is well-defined and in this case the two centers coincide.
In particular, the CMC-center of mass is well-defined and coincides with the ADM-center of mass if $(\outM,\outg*,\outx)$ satisfies the $\Ck^2_{\frac32+\outve}$-Regge-Teitel\-boim conditions.
\end{theorem}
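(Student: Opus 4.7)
The plan is to identify $\centerz<\Hradius>^i = \fint_{\M<\Hradius>}\outx^i\d\mug$ with the pre-limit ADM-integrand evaluated at scale $\Hradius$, up to an error that tends to $0$ unconditionally, so that well-definedness and value transfer from one center to the other.

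First, I would exploit the interpolating family $\outg[\atime]* = \schwarzoutg* + \atime(\outg*-\schwarzoutg*)$ already used in Section \ref{existence_of_the_CMC-foliation} and integrate $\partial*_\atime\centerz<\Hradius>[\atime]^i$ from $0$ to $1$. Since $\M<\Hradius>[0]$ is a coordinate sphere in Schwarzschild so $\centerz<\Hradius>[0]=0$, this gives a closed expression for $\centerz<\Hradius>^i$ as a single integral along the family. The integrand involves $\rnu[\atime]\,\nu[\atime]^i$ on $\M<\Hradius>[\atime]$, and by the stability analysis (Lemma \ref{I=J}) one already has $\jacobiext[\atime]*\rnu[\atime] = -\outmomden[\atime]*(\nu) + \div\outzFund[\atime]_\nu - \trtr{\zFund}{\outzFund[\atime]}$. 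Inverting $\jacobiext[\atime]*$ via Proposition \ref{Stability}, the translational part of $\rnu[\atime]$ (which is the only part contributing to the motion of the center, since $\nu_i$ is, up to $\Hradius^{-\frac12-\ve}$, translational by Proposition \ref{Regularity_of_surfaces_in_asymptotically_flat_spaces}) is essentially $\frac{\Hradius^3}{6\mass}\,\trans(\text{RHS})$.

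Second, I would rewrite each of the resulting boundary integrals $\int_{\M<\Hradius>}(\text{stress terms})\,\nu_i\d\mug$ as a boundary integral over the Euclidean coordinate sphere $\sphere^2_\Hradius(0)$ plus a correction. The correction has two sources: (a) a bulk term on the annular region between $\sphere^2_\Hradius(0)$ and $\M<\Hradius>$ coming from the divergence theorem, and (b) a graph-perturbation term because $\M<\Hradius>$ sits as a $\Ck^2$-graph of small norm over a nearby coordinate sphere. Both sources are controlled in terms of $\outric$, $\outsc$ and antisymmetric combinations of $\outlevi*$ and $\outg*$. Using the sharp $\Ck^2_{1+\outve}$-Regge-Teitelboim cancellation \eqref{RTC_g}, the antisymmetric parts pick up an extra decay factor $\rad^{-\frac12-\outve}$, which is exactly what is needed to make the bulk term converge absolutely and to beat the $\Hradius^{-1-\outve}$ loss from changing the surface of integration.

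Third, by matching the resulting leading boundary integral against the definition of $\outcenterz_{\text{ADM}}^i$, I would obtain an identity of the form
\begin{equation*}
\centerz<\Hradius>^i = \frac1{16\pi\,\mass}\sum_{\oj,\ok}\int_{\sphere^2_\Hradius(0)}\bigl(\outx^i(\partial[\outx]_\oj@{\outg_{\oj\ok}}-\partial[\outx]_\ok@{\outg_{\oj\oj}})\outx^\ok - \outg_\oj^\oi\tfrac{\outx^\oj}\Hradius + \outg_{\oj\oj}\tfrac{\outx^\oi}\Hradius\bigr)\d\eukmug + \mathcal E(\Hradius),
\end{equation*}
where $\mathcal E(\Hradius)\to 0$ as $\Hradius\to\infty$ with no Regge-Teitelboim hypothesis beyond what was used in step two. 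The biconditional and equality then follow directly: one side of the identity has a limit iff the other does, and the limits agree. The last claim (the $\Ck^2_{\frac32+\outve}$ conclusion) is immediate since those conditions are strictly stronger than the $\Ck^2_{1+\outve}$ ones and Huang's argument already shows the ADM-center exists under $\Ck^2_{\frac32+\outve}$-Regge-Teitelboim.

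The main obstacle will be step two: carefully isolating the parity-odd contributions in the annular bulk integral so that the decay improvement from Regge-Teitelboim is actually used, rather than overwhelmed by the factor $\rad\sim\Hradius$ one picks up from the factor $\outx^i$ in the ADM-integrand. This is exactly where the weakened $\Ck^2_{1+\outve}$ version of the conditions sits at the boundary of what is allowable; I would expect to split the integrand into a pure parity-odd piece (handled by Regge-Teitelboim) and a manifestly convergent piece (handled by the $\Ck^2_{\frac12+\outve}$ decay alone), and to use integration by parts once to transfer a derivative off $\outx^i$ in order to keep the weight controlled.
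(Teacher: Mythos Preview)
Your overall architecture---differentiate $\centerz<\Hradius>[\atime]$ along the interpolation, invert $\jacobiext[\atime]*$ on the translational component via Proposition~\ref{Stability}, and reduce the resulting surface integral to $I_i(\sphere^2_\Hradius(0))$---is the paper's. The gap is in your step two: you cannot transfer the integral from $\M<\Hradius>[\atime]$ to $\sphere^2_\Hradius(0)$ using only the Regge--Teitelboim decay of the \emph{ambient} quantities. Those conditions control the parity of $\outg,\outlevi,\outric,\outsc$ with respect to the coordinate origin, but they say nothing about the parity of the leaf itself. In the graph-perturbation error you get terms of the form (even part of the integrand)\,$\times$\,(graph function $f$)\,$\times$\,(odd weight $\nu_i$); on a centered sphere this integrates to zero only if $f$ is even. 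A priori you only have $\Vert f\Vert_{\Wkp^{2,\infty}}\le C\,\Hradius^{\frac12-\outve}$ from Proposition~\ref{Regularity_of_surfaces_in_asymptotically_flat_spaces} with no parity control, and the same holds for the center shift $|\centerz<\Hradius>[\atime]|\le C\,\Hradius^{1-\outve}$. A direct count of the bulk term over the annulus between $\sphere^2_\Hradius(0)$ and $\M<\Hradius>[\atime]$ (thickness $\sim\Hradius^{1-\outve}$, integrand $\eukoutdiv(\eukoutdiv\momentum)\cdot\outx_i\sim\outsc\cdot\Hradius\sim\Hradius^{-2-\outve}$, cross-sectional area $\sim\Hradius^2$) gives an error of order $\Hradius^{1-2\outve}$, which does not vanish under $\Ck^2_{1+\outve}$-Regge--Teitelboim.

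The paper closes this gap with a coupled bootstrap, proving Proposition~\ref{Symmetry_of_CMC_surfaces} simultaneously with Theorem~\ref{The_centers_of_mass}. Assuming $\M<\Hradius>[\atime]$ is already asymptotically symmetric (\,$|\centerz|\le C$ and $\Vert f-f\circ\varphi\Vert_{\Hk^2}\le C\,\Hradius^{1-\outve}$\,), one shows: (i)~the antisymmetric part of $\deform{\rnu[\atime]}$ is of order $\Hradius^{1-\outve}$, using \emph{both} the Regge--Teitelboim conditions on the ambient data \emph{and} the assumed leaf symmetry to get parity cancellation in $\jacobiext*\rnu$; and (ii)~$|I_i(\M<\Hradius>[\atime])-I_i(\sphere^2_\Hradius(0))|\le C\,\Hradius^{-\outve}$ via Lemma~\ref{Estimating_the_center__implicit__strong_version}, whose hypotheses again include the leaf symmetry. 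Together these bound $\Vert\trans{\rnu[\atime]}\Vert$ and hence $\partial_\atime\centerz$, so the symmetry propagates from the Schwarzschild sphere at $\atime=0$. Without inserting this leaf-symmetry bootstrap, your parity splitting in step two has nothing to act on and the comparison to $\sphere^2_\Hradius(0)$ fails at the borderline $\Ck^2_{1+\outve}$ level.
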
%
In the proof of this theorem, we will show that the CMC-surfaces are asymptotically symmetric if the $\Ck^2_{1+\outve}$-Regge-Teitel\-boim conditions are satisfied and the center is well-defined. This was already proven by Huang, but she additionally assumed that $(\outM,\outg*,\outx)$ is $\Ck^5_{\frac12+\ve}$-asymptotically flat and that the $\Ck^2_{\frac32+\outve}$-Regge-Teitel\-boim conditions are satisfied \cite{huang2009center}. Let us state this result.
\begin{proposition}[Symmetry of the CMC-leaves]\label{Symmetry_of_CMC_surfaces}
Let $(\outM,\outg,\outx)$ be a three-di\-men\-sional $\Ck^2_{\frac12+\ve}$-asymptotically flat Riemannian manifold and assume that its mass is non-vanishing, i.\,e.\ $\mass\neq0$. If $(\outM,\outg*,\outx)$ satisfies the $\Ck^2_{1+\outve}$-Regge-Teitel\-boim conditions and the CMC-center of mass is well-defined, then the CMC-leaves are asymptotically symmetric, i.\,e.\ for each $p\in\interval1\infty$ there is a constant $C=\Cof[|\mass|][\ve][\oc][p]$ and for every $\Hradius>\Hradius_0$ there is a function $\graphf<\Hradius>\in\Ck^1(\sphere^2_\Hradius(0))$ such that
\[ \graph\graphf<\Hradius> = \M<\Hradius>, \quad
		\Vert\graphf<\Hradius>\Vert_{\Wkp^{2,p}(\sphere^2_\Hradius(0))} \le C\,\Hradius^{\frac12+\frac2p-\outve}, \quad
		\Vert\graphf<\Hradius>-\graphf<\Hradius>\circ\varphi\Vert_{\Hk^2(\sphere^2_\Hradius(0))} \le C\,\Hradius^{1-\outve}, \]
where $\M<\Hradius>$ is as in Theorem \ref{Existence_of_a_CMC-foliation} and where $\varphi:\R^3\to\R^3:x\mapsto{-}x$.\pagebreak[3]
\end{proposition}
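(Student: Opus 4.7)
The plan is to represent each CMC leaf $\M<\Hradius>$ as a graph over the coordinate sphere $\sphere^2_\Hradius(0)$, compare it to its reflection under $\varphi(x)={-}x$, and use the stability operator decomposition of Proposition~\ref{Stability} to estimate the antipodal asymmetry $w_\Hradius := \graphf<\Hradius> - \graphf<\Hradius>\circ\varphi$.

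First I would obtain the graph representation. By Theorem~\ref{Existence_of_a_CMC-foliation} and Proposition~\ref{Regularity_of_surfaces_in_asymptotically_flat_spaces}, $\M<\Hradius>$ is a graph over $\sphere^2_\Hradius(\centerz<\Hradius>)$ with $\Wkp^{2,\infty}$-controlled graph function. Since the CMC-center $\outcenterz_{\text{CMC}} = \lim_\Hradius \centerz<\Hradius>$ exists by assumption, I replace the chart $\outx$ by $\outx - \outcenterz_{\text{CMC}}$; a constant shift preserves the $\Ck^2_{\nicefrac12+\outve}$-asymptotic flatness and the $\Ck^2_{1+\outve}$-Regge-Teitel\-boim conditions (up to absorbing constants into $\oc$). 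In these shifted coordinates $\centerz<\Hradius>\to 0$, so $|\centerz<\Hradius>|$ is bounded and $\M<\Hradius>$ can be reparameterized as a graph $\graphf<\Hradius>$ over $\sphere^2_\Hradius(0)$ satisfying the first bound of the proposition.

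Next I would compare $\M<\Hradius>$ with its reflection. Since $\varphi$ is a Euclidean isometry, $\varphi(\M<\Hradius>) = \graph(\graphf<\Hradius>\circ\varphi)$ has constant mean curvature $\nicefrac{{-}2}\Hradius$ with respect to $\varphi^*\outg$. The Regge-Teitel\-boim conditions \eqref{RTC_g} give pointwise decay of order $\rad^{-\nicefrac32-\outve}$ (with correspondingly improved bounds on $\outlevi$) for $\outg - \varphi^*\outg$, which by a direct perturbation computation for the mean curvature yields
\begin{equation*}
 H_\outg(\graph(\graphf<\Hradius>\circ\varphi)) = -\frac{2}{\Hradius} + E, \qquad \|E\|_{\Lp^p(\M<\Hradius>)} \le \frac{C}{\Hradius^{\nicefrac52+\outve-\nicefrac2p}}.
\end{equation*}
Linearizing the mean curvature map $u \mapsto H_\outg(\graph u)$ at $\graphf<\Hradius>$ (whose Fr\'echet derivative is the stability operator $\jacobiext*$) produces $\jacobiext* w_\Hradius = -E + Q$, where $Q$ is a quadratic remainder absorbed by a standard bootstrap using the $\Wkp^{2,\infty}$-control of $\graphf<\Hradius>$.

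I would then decompose $w_\Hradius = \trans{w_\Hradius} + \deform{w_\Hradius}$ via Proposition~\ref{Stability}. The deformational part satisfies $\|\deform{w_\Hradius}\|_{\Wkp^{2,p}} \le C\Hradius^2\,\|\jacobiext* w_\Hradius\|_{\Lp^p} \le C\Hradius^{-\nicefrac12-\outve+\nicefrac2p}$, which is much smaller than the claimed $\Hradius^{1-\outve}$ bound. The translational part is asymptotically of the form $\trans{w_\Hradius} \approx 2\,\centerz<\Hradius>\cdot\nu$, computed from the definition of the Euclidean coordinate center and the fact that $\nu\circ\varphi = -\nu$; hence $\|\trans{w_\Hradius}\|_{\Hk^2} \le C\,|\centerz<\Hradius>|\,\Hradius$, and the $\Hk^2$-bound reduces to the rate $|\centerz<\Hradius>| \le C\Hradius^{-\outve}$.

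The hard part, and final step, is to promote the a priori unrated convergence $\centerz<\Hradius>\to 0$ into the quantitative rate $|\centerz<\Hradius>| \le C\Hradius^{-\outve}$; the bare existence of the CMC-center provides no rate. Following the strategy of Lemma~\ref{I=J}, I would differentiate $\centerz<\Hradius>$ in $\Hradius$ using the lapse-function formula along the foliation. The resulting derivative is an integral over $\M<\Hradius>$ of $\outric*(\nu,\nu)\,\nu_i$ and related terms (compare the computation in Lemma~\ref{Ric(nu,nu)nu_i-integrals} used in the proof of the foliation property). Under the $\Ck^2_{1+\outve}$-Regge-Teitel\-boim conditions, the even-in-$\outx$ parts of each such integrand cancel at the improved rate $\rad^{-3-\outve}$, yielding $|\partial_\Hradius\centerz<\Hradius>| \le \nicefrac C{\Hradius^{1+\outve}}$. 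Integrating from $\Hradius$ to $\infty$ against $\lim\centerz<\Hradius> = 0$ then gives the required rate, completing the bound on $\|w_\Hradius\|_{\Hk^2}$.
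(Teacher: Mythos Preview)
Your overall architecture—writing $\M<\Hradius>$ as a graph over $\sphere^2_\Hradius(0)$, linearizing the mean curvature of the reflected surface, and splitting $w_\Hradius$ into translational and deformational parts via Proposition~\ref{Stability}—is reasonable, and the deformational estimate does go through. The final step, however, is circular, and this is a genuine gap.

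You need $|\centerz<\Hradius>|\le C\Hradius^{-\outve}$, and you try to obtain it from $|\partial_\Hradius\centerz<\Hradius>|\le C\Hradius^{-1-\outve}$ by arguing that the ``even-in-$\outx$ parts of the integrand cancel'' under the $\Ck^2_{1+\outve}$-Regge--Teitel\-boim conditions. But Regge--Teitel\-boim controls the parity of $\outric$ about the \emph{origin}, whereas the normal $\nu$ and the domain of integration $\M<\Hradius>$ are (approximately) symmetric about $\centerz<\Hradius>$, not about $0$. The two symmetries are centered at different points, so the parity cancellation you invoke does not take place unless you already know $\centerz<\Hradius>$ is small with a rate—which is precisely what you are trying to prove. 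The bare assumption that the CMC-center exists gives $\centerz<\Hradius>\to 0$ with no rate whatsoever, so this cannot be bootstrapped in the way you sketch. (There is also a minor slip: under the $\Ck^2_{1+\outve}$ conditions the odd part of $\outg$ is $O(\rad^{-1-\outve})$, not $O(\rad^{-3/2-\outve})$.)

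The paper breaks the circularity by working in the artificial-time direction $\atime\in\interval*0*1$ of the deformation $\outg[\atime]*=\schwarzoutg*+\atime(\outg*-\schwarzoutg*)$ rather than in $\Hradius$. At $\atime=0$ the leaf is an exact round sphere in Schwarzschild, hence \emph{exactly} symmetric about $0$. One then shows that symmetry is preserved along the $\atime$-evolution: the $\atime$-lapse satisfies $\jacobiext*\rnu=-\outmomden(\nu)+\div\outzFund_\nu-\trtr\zFund\outzFund$, and \emph{given} the (inductively assumed) symmetry of $\M[\atime]<\Hradius>$, Regge--Teitel\-boim controls the antisymmetric part of the right-hand side, yielding $\Vert(\deform\rnu)_a\Vert_{\Hk^2}\le C\Hradius^{1-\outve}$. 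The translational part is handled via Lemma~\ref{Estimating_the_center__implicit__strong_version}, which (again using the assumed symmetry of the surface) compares $I_i(\M)$ with $I_i(\sphere^2_\Hradius(0))$; the latter is a \emph{coordinate} quantity whose limit is the ADM-center, so the existence of the center closes the estimate $\Vert\trans\rnu\Vert_{\Lp^2}\le C\Hradius$. This open--closed argument in $\atime$ furnishes the missing starting point that your $\Hradius$-differentiation lacks.
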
%
Note that we can replace $C\,\Hradius^{\frac12+\frac2p-\outve}$ and $C\,\Hradius^{1-\outve}$ in Proposition \ref{Symmetry_of_CMC_surfaces} by $C\,\Hradius^{\frac2p-\outve}$ and $C\,\Hradius^{\frac12-\outve}$ if we assume the $\Ck^2_{\frac32+\outve}$-Regge-Teitel\-boim conditions. This can be seen by simply repeating the proof using Proposition \ref{Symmetry_of_CMC_surfaces} instead of Proposition \ref{Regularity_of_surfaces_in_asymptotically_flat_spaces}.\pagebreak[2]\smallskip

In order to prove Proposition \ref{Symmetry_of_CMC_surfaces} and Theorem \ref{The_centers_of_mass}, we need the following simple characterization of the translation of the Euclidean coordinate center of a surface under a deformation. For example this was done in \cite[Prop.~3.11]{nerz2013timeevolutionofCMC} in the case of asymptotically Schwarzschildean Riemannian manifolds.
\begin{proposition}[Movement of the spheres by the lapse function]\label{Movement_of_the_spheres_by_the_lapse_function}
Let $(\outM,\outg,\outx)$ be a three-dimensional $\Ck^2_{\frac12+\ve}$-asymptotically flat Riemannian manifold, $\c_0\in\interval*01$, $\c_1\ge0$ and $\eta>0$ be constants, and $(\M,\g*)\hookrightarrow(\outM,\outg*)$ be a closed hypersurface with $\M\in\mathcal A^{\ve,\eta}_\Aradius(\c_0,\c_1)$. Furthermore, let $\psi:\interval-{\eta_0}{\eta_0}\times\M\to\outM$ be a deformation of $\M$, i.\,e.\ $\psi(0,\M)=\M$ and $\psi(\eta,\cdot):\M\to\outM$ is a diffeomorphism onto its image. Then there are constants $C=\Cof[|\mass|][{\outve}][\oc][\eta][\c_0][\c_1]$ and $\Hradius_0=\Cof{\Hradius_0}[|\mass|][{\outve}][\oc][\eta][\c_0][\c_1]$ neither depending on $\psi$ nor on $\Aradius$, such that the Euclidean coordinate centers $\centerz<\eta>:=(\fint_{\M<\eta>}\outx^i\d\mug)_{i=1}^3$ of the hypersurfaces $\M<\eta>:=\psi(\eta,\M)$ satisfy
\[ \vert \partial[\eta]<\eta=0>@{\centerz<\eta>^i} - 3\fint_{\M} \nu_i\lapse[\psi] \d\mug \vert \le \frac C{\Hradius^{\frac32+\outve}}\Vert\lapse[\psi]\Vert_{\Lp^2(\M)} \]%
if $\Hradius>\Hradius_0$, where $\lapse[\psi]:=\outg*(\partial*_\eta\psi,\nu)$ is the lapse function of $\psi$ and $\nu$ denotes the outer unit normal of $\M\hookrightarrow(\outM,\outg*)$. In particular, this translation is {\normalfont(}in highest order{\normalfont)} characterized by the translational part $\relax^{\psi\!}\trans{\rnu}$ of $\rnu[\psi]$.
\end{proposition}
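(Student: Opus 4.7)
The plan is to apply the first variation formula for the centre-of-mass integral, compare the result to the Euclidean spherical situation---in which the identity is exact---and then control the discrepancy using the graph representation supplied by Lemma~\ref{Compatibility_of_area_and_mean_curvature} combined with DeLellis--M\"uller.

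First, decomposing $\del_\eta\psi = \lapse[\psi]\,\nu + \rbeta[\psi]$, the shift $\rbeta[\psi]$ contributes only through total divergences on the closed surface $\M$ and drops out of both $\del_\eta\int_{\M<\eta>}\outx^i\,\d\mug$ and $\del_\eta\volume{\M<\eta>}$. Combined with $\del_\eta\d\mug = -\H\,\lapse[\psi]\,\d\mug$ (modulo such divergences), the quotient rule yields
\[
\del_\eta|_{\eta=0}\,\centerz<\eta>^i
 = \fint_\M\bigl[\nu(\outx^i) - (\outx^i - \centerz^i)\,\H\bigr]\,\lapse[\psi]\,\d\mug.
\]
Replacing $\nu(\outx^i) = \nu^i$ by $\nu_i = \outg_{ij}\nu^j$ introduces only an $\oc\,\Aradius^{-1/2-\outve}$ pointwise discrepancy from asymptotic flatness, contributing after Cauchy--Schwarz at most the admissible order $C\,\Aradius^{-3/2-\outve}\Vert\lapse[\psi]\Vert_{\Lp^2(\M)}$. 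Hence the claim reduces to
\[
\Bigl|\fint_\M\bigl[2\,\nu_i + (\outx^i - \centerz^i)\,\H\bigr]\,\lapse[\psi]\,\d\mug\Bigr|
 \le \frac C{\Aradius^{3/2+\outve}}\Vert\lapse[\psi]\Vert_{\Lp^2(\M)}.
\]

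For the comparison, observe that on any Euclidean sphere $\sphere^2_\Aradius(\centerz')$ of radius $\Aradius$ the pointwise identity $2\,\nu^e_i + (\outx^i - (\centerz')^i)\,\H^e \equiv 0$ is immediate from $\outx - \centerz' = \Aradius\,\nu^e$ and $\H^e = -2/\Aradius$, where $\nu^e$ and $\H^e$ denote the Euclidean outer normal and mean curvature. Applying Lemma~\ref{Compatibility_of_area_and_mean_curvature} to $\M\in\mathcal A^{\ve,\eta}_\Aradius(\c_0,\c_1)$ produces $\Vert\zFundtrf\Vert_{\Lp^2(\M)} \le C\,\Aradius^{-\delta}$ for some $\delta = \delta(\outve,\eta)>0$, and \cite[Thm~1.1]{DeLellisMueller_OptimalRigidityEstimates} then realises $\M$ as a graph $\graph\graphf$ over some $\sphere^2_\Aradius(\centerz')$ with $\Vert\graphf\Vert_{\Hk^2(\sphere^2_\Aradius(\centerz'))} \le C\,\Aradius^{2-\delta}$ and $\vert\centerz - \centerz'\vert \le C\,\Aradius^{1-\delta}$. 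Writing
\[
 \Phi_i := \bigl[2\,\nu_i + (\outx^i - \centerz^i)\,\H\bigr] - \bigl[2\,\nu^e_i + (\outx^i - (\centerz')^i)\,\H^e\bigr],
\]
the Euclidean identity reduces the remaining task to bounding $\bigl|\fint_\M\Phi_i\,\lapse[\psi]\,\d\mug\bigr|$. I would estimate $\Vert\Phi_i\Vert_{\Lp^2(\M)}$ by tracking three sources: (a)~the $\outg$-vs.-$\eukoutg$ difference in $\nu$ and $\H$, of size $O(\Aradius^{-1/2-\outve})$ pointwise by asymptotic flatness; (b)~the graph perturbation in $\nu^e$, $\H^e$, and $\outx - \centerz'$, controlled via the standard radial-graph formulae in $\graphf$ and its spherical gradient by the $\Hk^2$-bound above; (c)~the centre shift $\vert\centerz - \centerz'\vert$ multiplied by $\vert\H\vert \sim \Aradius^{-1}$. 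Each source yields $\Vert\Phi_i\Vert_{\Lp^2(\M)} \le C\,\Aradius^{1/2-\outve}$, and Cauchy--Schwarz delivers
\[
 \Bigl|\fint_\M\Phi_i\,\lapse[\psi]\,\d\mug\Bigr|
  \le \frac{\Vert\Phi_i\Vert_{\Lp^2(\M)}\,\Vert\lapse[\psi]\Vert_{\Lp^2(\M)}}{\volume{\M}}
  \le \frac C{\Aradius^{3/2+\outve}}\Vert\lapse[\psi]\Vert_{\Lp^2(\M)}.
\]

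The delicate point is the $\H$-contribution in the error analysis, because the hypothesis $\M\in\mathcal A^{\ve,\eta}_\Aradius$ provides only integrated, not pointwise, control of $\H$. The natural workaround is to integrate $\fint(\outx^i - \centerz^i)\,\H\,\lapse[\psi]\,\d\mug$ by parts using the Gauss-type identity $\laplace\outx^i = \H\,\nu^i + E^i$ with $\vert E^i\vert \le C\,\Aradius^{-3/2-\outve}$ (a direct consequence of $\vert\outlevi\vert \le \oc\,\rad^{-3/2-\outve}$), which shifts one derivative onto $\lapse[\psi]$ and obviates any pointwise estimate of $\H$; the remaining quadratic-in-$\graphf$ terms are then absorbed by the $\Hk^2$-control on $\graphf$ via the Sobolev embedding on the almost-round $\M$.
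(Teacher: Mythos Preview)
Your first-variation computation and quotient-rule reduction to the estimate
\[
\Bigl|\fint_\M\bigl[2\,\nu_i + (\outx_i - \centerz_i)\,\H\bigr]\,\lapse[\psi]\,\d\mug\Bigr|
 \le \frac C{\Aradius^{3/2+\outve}}\Vert\lapse[\psi]\Vert_{\Lp^2(\M)}
\]
is exactly the paper's argument; the paper then simply bounds the bracket and applies Cauchy--Schwarz.

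The gap is in how you control the bracket. You invoke only Lemma~\ref{Compatibility_of_area_and_mean_curvature} together with DeLellis--M\"uller, which yields $\Vert\graphf\Vert_{\Hk^2}\le C\Aradius^{2-\delta}$ and $\vert\centerz-\centerz'\vert\le C\Aradius^{1-\delta}$ with a $\delta>0$ that is only of order~$\eta$. With that strength, your sources~(b) and~(c) each contribute a term of size $C\Aradius^{1-\delta}$ to $\Vert\Phi_i\Vert_{\Lp^2}$ (the factor $\vert\outx-\centerz\vert\sim\Aradius$ against an $\Lp^2$-error of order $\Aradius^{-\delta}$), which does \emph{not} lie below $C\Aradius^{1/2-\outve}$ unless $\delta\ge\tfrac12+\outve$---and nothing in the hypotheses forces that. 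The paper instead relies tacitly on Proposition~\ref{Regularity_of_surfaces_in_asymptotically_flat_spaces}, whose bootstrap produces the pointwise control $\Vert\graphf\Vert_{\Wkp^{2,\infty}}\le C\Aradius^{1/2-\outve}$; with that in hand one has $\vert 2\nu_i+(\outx_i-\centerz_i)\H\vert\le C\Aradius^{-1/2-\outve}$ directly, and no further device is needed. That proposition carries the extra hypothesis $\Vert\H-\mean\H\Vert_{\Wkp^{1,p}}\le\c_1\Aradius^{-3/2-\outve+2/p}$, which holds in every application of the present result since the surfaces are always CMC---so the paper's statement is a little loose, but the proof is sound in context.

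Your integration-by-parts workaround via $\laplace\outx_i=\H\nu_i+E_i$ does not rescue the situation: substituting this into $\fint(\outx_i-\centerz_i)\H\lapse[\psi]\d\mug$ and integrating by parts necessarily moves at least one derivative onto~$\lapse[\psi]$, so the resulting bound is in terms of $\Vert\lapse[\psi]\Vert_{\Hk(\M)}$ (or worse) rather than the $\Vert\lapse[\psi]\Vert_{\Lp^2(\M)}$ demanded by the proposition. The clean repair is simply to quote Proposition~\ref{Regularity_of_surfaces_in_asymptotically_flat_spaces} in place of Lemma~\ref{Compatibility_of_area_and_mean_curvature}; then your sources (a)--(c) all genuinely land at $C\Aradius^{1/2-\outve}$ and the last paragraph becomes unnecessary.
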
%
\begin{proof}
As the Euclidean coordinate center of any surface is invariant under diffeomorphisms of the surface, we can assume that $\partial*_\eta\psi=\lapse[\psi]\,\nu$ for some function $\lapse[\psi]$. For the desired inequality, we first approximate the derivative of the numerator ($\centerz<\eta>\,\volume{\M<\eta>}=\int_{\M<\eta>}\outsymbol x_i\d\mug$):
\begin{align*}\hspace{4.5em}&\hspace{-4.5em}
 \vert\partial[\eta]@{(\centerz<\eta>_i\,\volume{\M<\eta>})} - 3\int_{\M} \normal_i\,\lapse[\psi] \d\mug + \int_{\M} \H\,\lapse[\psi]\, z_i \d\mug\vert \\
  ={}& \vert\int_{\M} \partial[\eta]@{(\outsymbol x_i\circ\psi)} - \H\,\lapse[\psi]\,\outsymbol x_i - 3\,\normal_i\,\lapse[\psi] + \H\,\lapse[\psi]\, z_i \d\mug\vert \\
  ={}& \vert\int_{\M} \H\,\lapse[\psi]\,(\outsymbol x_i-\centerz_i) + 2\,\normal_i\,\lapse[\psi] \d\mug \vert
  \le C\,\Hradius^{\frac12-\outve}\,\Vert\lapse[\psi]\Vert_{\Lp^2(\M)}
\end{align*}
Using the Leibniz formula, we conclude the claimed inequality by
\begin{align*}
 \vert\partial[\eta]@{(\centerz<\eta>_i)} - 3\fint_{\M} \normal_i\,\lapse[\psi] \d\mug\d\mug\vert
  \le{}& \volume{\M}^{-1}\vert\partial[\eta]@{(\centerz<\eta>_i\,\volume{\M[\eta]})}
			+ \centerz_i\int\H\,\lapse[\psi]\d\mug
			- 3\int_{\M} \normal_i\lapse[\psi] \d\mug\vert\\
	\le{}& \frac C{\Hradius^{\frac32+\outve}}\,\Vert\lapse[\psi]\Vert_{\Lp^2(\M)}.\qedhere\pagebreak[3]
\end{align*}
\end{proof}
\begin{proof}[Proof of Theorem \ref{The_centers_of_mass} and Proposition \ref{Symmetry_of_CMC_surfaces}]
It is well-known that the ADM-center of mass is well-defined for every $\Ck^2_{\frac12+\ve}$-asymptotically flat manifold satisfying the $\Ck^2_{\frac32+\outve}$-Regge-Teitel\-boim conditions \cite{beig1987poincare,chrusciel2004mapping} (by the arguments below, this is also implied by Lemma \ref{Estimating_the_center__implicit__strong_version}). Thus, it is sufficient to prove the first part of the claim.\pagebreak[1]%

First, we prove Proposition \ref{Symmetry_of_CMC_surfaces}. To do so, let $\M[\atime]<\Hradius>$ denote the leaf with mean curvature $\H[\atime]<\Hradius>\equiv\nicefrac{{-}2}\Hradius$ with respect to the artificial metric $\outg[\atime]*=\schwarzoutg+\atime\,(\outg*-\schwarzoutg*)$ and let $\graphf[\atime]<\Hradius>\in\Hk^3(\sphere^2_\Hradius(\centerz[\atime]<\Hradius>\,))$ denote its graph function (see Proposition \ref{Regularity_of_surfaces_in_asymptotically_flat_spaces}).  Here, (asymptotic) symmetry means that
\begin{equation*} \vert\centerz[\atime]<\Hradius>\,\vert \le C, \qquad\Vert \graphf[\atime]<\Hradius> - \graphf[\atime]<\Hradius>\circ\psi \Vert_{\Hk^2(\sphere^2_\Hradius(\centerz[\atime]<\Hradius>\,))} \le C\,\Hradius^{1-\outve}, \labeleq{The_centers_of_mass__symmetry} \end{equation*}
where $\psi:\sphere^2_\Hradius(\centerz[\atime]<\Hradius>\,)\to\sphere^2_\Hradius(\centerz[\atime]<\Hradius>\,):x+\centerz[\atime]<\Hradius>\mapsto{-}x+\centerz[\atime]<\Hradius>$. As we know that this is true at (artificial) time $\atime=0$ ($\graphf[0]<\Hradius>\equiv0$), it is sufficient to prove that (asymptotic) symmetry is preserved under the (orthogonal) deformation $\Phi$ (see Section \ref{existence_of_the_CMC-foliation}), i.\,e.
\begin{equation*}
	\Vert \deform{\rnu[\atime]<\Hradius>}\circ\graphF - \deform{\rnu[\atime]<\Hradius>}\circ\graphF\circ\psi \Vert_{\Hk^2(\sphere^2_\Hradius(\centerz[\atime]<\Hradius>\,))} \le C\,\Hradius^{1-\outve}, \qquad
	\Vert \trans{\rnu[\atime]<\Hradius>}\circ\graphF \Vert_{\Hk^2(\sphere^2_\Hradius(\centerz[\atime]<\Hradius>\,))} \le C\,\Hradius, \labeleq{The_centers_of_mass__symmetry_preserved}
\end{equation*}
where $\graphF:\sphere^2_\Hradius(\centerz)\to\outM$ is the graph map corresponding to $f$ and $\rnu:=\outg[\atime]*(\partial*_\atime\Phi,\nu[\atime]<\Hradius>)$ is the lapse function in $\atime$-direction. Using Proposition \ref{Movement_of_the_spheres_by_the_lapse_function} and the estimates on $\rnu$ proved in Lemma \ref{I=J}, we see that the first inequality of \eqref{The_centers_of_mass__symmetry} is an implication of \eqref{The_centers_of_mass__symmetry_preserved}, too. Furthermore, we see that the combination of \eqref{The_centers_of_mass__symmetry_preserved} and Theorem \ref{The_centers_of_mass} also proves the claim of Proposition \ref{Symmetry_of_CMC_surfaces}.

Hence, let us assume that \eqref{The_centers_of_mass__symmetry} is true for some $\atime\in\interval*0*1$ and $\Hradius\in\interval{\Hradius_0}\infty$ and prove \eqref{The_centers_of_mass__symmetry_preserved}. We suppress the indices $\Hradius$ and $\atime$ in the following and denote the symmetric and the antisymmetric part of any function $h\in\Lp^2(\M)$ by 
\[ h_s := \frac{h+h\circ\graphF\circ\psi\circ\graphF^{-1}}2, \qquad h_a := \frac{h-h\circ\graphF\circ\psi\circ\graphF^{-1}}2, \]
respectively. By using the estimates on $f$ proven in Proposition \ref{Regularity_of_surfaces_in_asymptotically_flat_spaces}, we can compare $\M$ to $\sphere^2_\Hradius(0)$ and see that there is a complete $\Lp^2$-orthonormal system $\{\eflap_{i,j}\}_{j=-i...i,\,i\in\N_0}$ of eigenfunctions of the (negative) Laplacian such that
\[ \vert(\eflap_{i,j})_a\vert \le \frac C{\Hradius^{\frac32+\outve}} \text{ if $i$ is even }\quad\text{and}\quad\vert(\eflap_{i,j})_s\vert \le \frac C{\Hradius^{\frac32+\outve}} \text{ if $i$ is odd.} \]
Due to Proposition \ref{Regularity_of_surfaces_in_asymptotically_flat_spaces}, we know $\vert\zFundtrf*\vert^2\le\nicefrac C{\Hradius^{3+\outve}}$ and derive
\[ \vert(\jacobiext*\eflap_{i,j})_a\vert \le \frac C{\Hradius^{4+\outve}},\quad \vert(\jacobiext*\eflap_{i+1,j'})_s\vert \le \frac C{\Hradius^{4+\outve}} \qquad\forall\,i\text{ even}. \]
In particular, Proposition \ref{Stability} implies
\[ \Vert\deform\rnu_a\Vert_{\Hk^2(\M)} \le C\,\Hradius^{1-\outve}, \quad\text{if } \Vert(\jacobiext*\rnu)_a\Vert_{\Lp^2(\M)} \le \frac C{\Hradius^{1+\outve}},\qquad\Vert\jacobiext*\rnu\Vert_{\Lp^2(\M)} \le \frac C{\Hradius^{\frac12}}. \]
Furthermore, we know by the proof of Proposition \ref{I=J} that
\[ \jacobiext*\rnu = {-}\outmomden*(\nu) + \div\,\outzFund_\nu - \trtr\zFund\outzFund, \]
where $2\,\outzFund*=\schwarzoutg*-\outg*$ and $\outmomden*:=\outdiv(\outtr\,\outzFund-\outzFund*)$. Hence, the assumptions on the symmetry of $\M$ and $\outg*$ imply $\Vert\deform\rnu_a\Vert_{\Hk^2(\M)} \le C\,\Hradius^{1-\outve}$. Thus, we know that the leaves are (asymptotically) symmetric if
\[ \Vert\trans\rnu\Vert_{\Lp^2(\M)} \le C\,\Hradius. \]
Using the same argument as in the proof of Lemma \ref{I=J}, this is the case if and only if
\[ \vert\int_{\M}({-}\outmomden*(\nu) + \div\outzFund_\nu - \trtr\zFund\outzFund)\;\nu_i\d\mug\vert \le \frac C\Hradius. \]
Furthermore after integrating by parts and using $\vert\zFundtrf\vert\le\nicefrac C{\Hradius^{\frac32+\outve}}$ as well as the anti-symmetry of $\nu$ and $\zFund*$ resulting from the symmetry of $f$, we can use Lemma \ref{Estimating_the_center__implicit__strong_version} to conclude
\[ \vert I_i(\M) - I_i(\sphere^2_\Hradius(0))\vert \le \frac C{\Hradius^{1+\outve}}. \]
Here, we used the abbreviated form
\[ I_i(\M') := \int_{\M'}({-}\outmomden*(N) + \div\,\outzFund_N - \trtr\zFund\outzFund)N_i\d\mug \]
for any \pagebreak[1]closed hypersurface $\M'\hookrightarrow\outM\setminus\outsymbol L$ and its outer unit normal $N$. Thus, the combination of the Propositions \ref{Stability} and \ref{Movement_of_the_spheres_by_the_lapse_function} implies
\[ \vert\partial[\atime]@{\centerz[\Hradius]<\atime>} - \frac\Hradius{8\pi m}\,I_i(\sphere^2_\Hradius(0))\vert \le \frac C{\Hradius^{\outve}}\,(\vert\partial[\atime]@{\centerz[\Hradius]<\atime>}\vert+1). \]
Finally, we see that $(8\pi m)^{-1}\,\Hradius\;I_i(\sphere^2_\Hradius(0))$ does (in highest order) not depend on $\atime$ and that its limit for $\Hradius\to\infty$ is the ADM-center of mass. So, the existence of the ADM-center of mass implies that $\vert\partial*_\atime\centerz[\Hradius]<\atime>-\outcenterz_{\text{ADM}}\vert\to0$ for $\Hradius\to\infty$, which implies $\vert\centerz[\Hradius]<\atime>-\atime\,\outcenterz_{\text{ADM}}\vert\to0$ for $\Hradius\to\infty$. Hence, the CMC-center of mass exists and coincides with the ADM-center of mass if the ADM-center of mass exists. On the other hand, if the CMC-center of mass exists, i.\,e.\ $\vert\centerz[\Hradius]<\atime>-\outcenterz_{\text{CMC}}\vert\to0$, then the above argument proves that $\vert\partial*_\atime{\centerz[\Hradius]<\atime>}-\outcenterz_{\text{CMC}}\vert\to0$ for $\Hradius\to\infty$ as $\partial*_\atime{\centerz[\Hradius]<\atime>}$ is constant in $\atime$ in highest order. As the limit ($\Hradius\to\infty$) of $\nicefrac\Hradius{8\pi m}\;I_i(\sphere^2_\Hradius(0))$ is the ADM-center of mass, this proves that the ADM-center of mass exists and that it coincides with the CMC-center of mass if the CMC-center of mass exists. Thus, the two centers exist and coincides if one of them exists.
\end{proof}
\begin{remark}[Change of coordinates]\label{Change_of_coordinates}
Assume that $(\outM,\outg*,\outx)$ is $\Ck^2_{\frac12+\ve}$-as\-ymp\-to\-ti\-cally flat and satisfies the $\Ck^2_{\frac32+\outve}$-Regge-Teitel\-boim conditions. By repeating the proof of Lemma \ref{Ric(nu,e_i)-integrals} and Lemma \ref{Ric(nu,nu)nu_i-integrals}, we see that the results of both hold one order higher (in this setting), i.\,e.\ there are constants $\eta_i=\Cof{\eta_i}[\outM]\in\R$ such that
\[ \vert \int_{\M<\Hradius>}\outric*(\nu,\outsymbol e_i) - \frac\outsc2\,\nu_i \d\mug \vert \le \frac C{\Hradius^{1+\outve}}, \quad
	\vert \frac{\eta_i}\Hradius-\int_{\M<\Hradius>}(\outric*(\nu,\nu) - \frac\outsc2)\,\outx_i \d\mug \vert \le \frac C{\Hradius^{1+\outve}}, \]
where we again used Proposition \ref{Ordinary_differential__asymptotic__equations}. Repeating the proof of the foliation property in Theorem \ref{Uniqueness_of_the_CMC_surfaces}, we see that $\Vert\trans{\rnu}-\nicefrac{\eta\,\nu}\Hradius\Vert_{\Lp^\infty(\M)} \le \nicefrac C{\Hradius^{1+\outve}}$ and we already know $\Vert\deform{\rnu}\Vert_{\Lp^\infty(\M)}\le\nicefrac C{\Hradius^{\frac12+\ve}}$. Both inequalities are geometric ones, i.\,e.\ do not depend on the chosen coordinate system. As the center of mass is well-defined, combining these inequalities with Proposition \ref{Movement_of_the_spheres_by_the_lapse_function} implies $\eta_i=0$ as $\centerz<\Hradius>$ converges to the CMC-center of mass (i.,e.\ $\spartial[\Hradius]@{\centerz<\Hradius>}$ has finite integral).



\end{remark}

\section{Alternative assumptions}\label{Weaker_decay_assumptions}
In this small section, we explain alternative assumptions on $\outg$ and $\outsc$. Although, we state the assumption in their most natural form, we can also choose the assumption on $\outg$ independently from the on~$\outsc$, e.\,g.\ we can also assume pointwise assumptions on $\outsc$ and $\Wkp^{3,p}$-assumptions on $\outg*$.
Instead of~\eqref{Decay_assumptions_g}, we can assume
\begin{enumerate}
\item weaker pointwise assumptions\vspace{-.5em}
\begin{equation*} \qquad
 \vert\outg_{ij}-\eukoutg_{ij}\vert + \rad\,\vert\outlevi_{ij}^k \vert + \rad^2\,\vert\outric_{ij} \vert + \rad^2\vert\outsc\vert \le \frac{\oc(\rad)}{\rad^{\frac12}},\vspace{-.25em}
\end{equation*}
where $\oc\in\Ck(\interval*0\infty)$ is a function with $\oc(\rad)\to0$ as $\rad\to\infty$.
\item Sobolev assumptions\footnote{Note that there is slight mistake in \cite{nerz2015CMCfoliation}, where only $\outsc\in\Lp^1=\Wkp^{0,1}_{{-}3}(\outM)$ instead of $\outsc\in\Wkp^{1,1}_{{-}3}(\outM)$ was assumed.}\vspace{-.25em}
\[ \qquad\outg_{ij}-\eukoutg_{ij} \in\Wkp^{3,p}_{\nicefrac{{-}1}2}(\outM), \qquad\outsc\in\Wkp^{1,1}_{{-}3}(\outM),\qquad\quad p>2,\vspace{-.25em} \]
where we identified $\outM$ and $\outM\setminus\outsymbol L$ for notation convenience and used Bartnik's definition of weighted Sobolev spaces \cite[Def.~1.1]{bartnik1986mass}, i.\,e.\vspace{-.25em}
\begin{align*}&&
 \Vert T\Vert_{\Wkp^{0,p}_\eta(\outM)} :={}& \Vert \rad^{-\eta-\frac3p}\,\vert T\vert_{\outg*} \Vert_{\Lp^p(\outM)},\\ &&\vspace{-.25em}
   \Vert T\Vert_{\Wkp^{k+1,p}_\eta(\outM)} :={}& \Vert T\Vert_{\Wkp^{0,p}_\eta(\outM)} + \Vert\outlevi*T\Vert_{\Wkp^{k,p}_{\eta-1}(\outM)} \nopagebreak\vspace{-.25em}
\end{align*}
for any function (or tensor field) $T$ and constants $\eta\in\R$, $k\in\N_0$.
\end{enumerate}
If one of these assumptions is satisfied, then the CMC-foliation exists (as stated in Theorem~\ref{existence_of_the_CMC-foliation}), the CMC-leaves are unique within $\M\in\mathcal A^{0,\eta}_\Aradius(\c_0,\c_1)$ (as stated in Theorem~\ref{Uniqueness_of_the_CMC_surfaces} for $\outve=0$), and the three smallest eigenvalues $\ewjac<\Hradius>_i$ of the stability operator satisfy
\[ \vert\ewjac<\Hradius>_i-\frac{6\,\mass}{\Hradius^3} \vert \le \mathcal o(\Hradius^{{-}3}) \qquad\forall\,i\in\{1,2,3\} \]
(as stated in Theorems~\ref{Stability_of_the_foliation} for $\mathcal o(\Hradius^{{-}3})$ instead of $\frac C{\Hradius^{3+\outve}}$), where $\mathcal o(\Hradius^{{-}3})$ denotes a function with $\Hradius^3\,\mathcal o(\Hradius^{{-}3})\to 0$ for $\Hradius\to\infty$ which depends on $\oc$ and on $\Vert\outg-\eukoutg\Vert_{\Wkp^{3,p}_{\nicefrac{{-}1}2}(\outM)}$ in the first and second setting, respectively.

\subsection{Assuming non-negativity of \texorpdfstring{$\outsc$}{the scalar curvature}}
If $\outg*$ satisfies one of the assumption explained above (or the one in ~\eqref{Decay_assumptions_g}), but $\outsc$ satisfies only $\outsc\in\Lp^1$, then the proofs of this paper cannot be applied---more precisely the central inequalities~\eqref{Eigenvalues_of_the_Laplacian__t}, \eqref{Eigenvalues_of_the_Laplacian__d} and \eqref{Stability__t}---\eqref{Stability__ewjac} do not hold anymore. However, the proof of Lemma~\eqref{Eigenvalues_of_the_Laplacian} can be applied to deduce (in the notation of this lemma)
\[ \vert\ewlap_i - (\frac2{\Hradius^2} - \frac{6\,\mHaw}{\Hradius^3} + \int_{\M}(\outric*(\nu,\nu)-\frac12\outsc)\,f_i^2 \d\mug)\vert \le \frac C{\Hradius^{3+\ve}} \qquad\forall\,i\in\{1,2,3\} \]
and
\[ \ewlap_k > \frac 5{\Hradius^2} \quad\forall\,k > 3, \qquad\ \vert\int_{\M}(\outric*(\nu,\nu)-\frac12\outsc)\,f_i\,f_j \d\mug \vert \le \frac C{\Hradius^{3+\ve}} \quad\forall\,i\neq j\in\{1,2,3\} \]
instead of~\eqref{Eigenvalues_of_the_Laplacian__t} and~\eqref{Eigenvalues_of_the_Laplacian__d}, respectively, i.\,e.\ we have to replace $\outric$ by $\outric-\frac12\outsc\outg$. Therefore, if we assume that $\outsc$ is additionally non-negative, more precisely ${-}\oc\,\rad^{{-}3-\outve}\le\outsc$ and $\outsc\in\Lp^1(\outM)$, then we can apply the proof of Proposition~\ref{Stability} to get (in the notation of this proposition)
\[  \int_{\M} (\jacobiext*\trans g)\,\trans h\d\mug \ge \frac{6\mHaw}{\Hradius^3}\int_{\M}\trans g\,\trans h\d\mug - \frac C{\Hradius^{3+\ve}}\Vert\trans g\Vert_{\Lp^2(\M)}\,\Vert\trans h\Vert_{\Lp^2(\M)} \]
and
\[ \vert\,\ewjac\vert \ge \frac3{2\Hradius^2} \qquad\text{or}\qquad\Vert\,\deform{\efjac}\Vert_{\Hk^2(\M)} \le \frac C{\Hradius^{\frac12+\outve}}\Vert\,\efjac\Vert_{\Hk^2(\M)},\quad \ewjac\ge\frac{6\mHaw}{\Hradius^3}-\frac C{\Hradius^{3+\outve}} \]
instead of~\eqref{Stability__t} and~\eqref{Stability__ewjac}, respectively. Inequality~\eqref{Stability__d} remains valid in its original form. The famous positive mass theorem by Schoen-Yau implies $\mass>0$, \cite{schoen1981proof} and see \cite{schoen1989variational} for the version used here. Therefore, we also know $\mHaw>0$ (see Appendix~\ref{ricci_integrals_and_the_mass}). All in all, the arguments explained in Section~\ref{existence_of_the_CMC-foliation} therefore prove the following result for the situation closer to the one of the positive mass theorem:

\begin{theorem}[Assuming non-negativity of \texorpdfstring{$\outsc$}{the scalar curvature}]
Let $\oc$ be a constant and $(\outM,\outg*)$ be a three-dimensional and non-flat Riemannian manifold with integrable scalar curvature and assume there exists a smooth chart $\outx:\outM\setminus\outsymbol L\to\R^3$ of $\outM$ outside a compact set $\overline L\subseteq\outM$. If there exists a function $\oc'$ with $\frac{\oc'}{\rad^3}\in\Lp^1(\outM\setminus\outsymbol L)$ with\vspace{-.25em}
\[ \vert\outg_{ij}-\eukoutg_{ij}\vert + \rad\,\vert\outlevi_{ij}^k \vert + \rad^2\,\vert\outric_{ij} \vert \le \frac{\oc'}{\rad^{\frac12}},\quad
	{-}\frac{\oc'}{\rad^3} \le \outsc, \quad
		\oc'(\rad)\to 0 \text{ as } \rad\to\infty\vspace{-.25em} \]
or there exist constants $p>2$ and $\oc$ with $\Vert\outg_{ij}-\eukoutg_{ij}\Vert_{\Wkp^{3,p}_{\nicefrac{{-}1}2}(\outM\setminus\overline L)}\le \oc$\vspace{-.1em}, and \hbox{{\normalfont(}in} both \hbox{cases\hspace{.05em}\normalfont)} $\mass\neq0$,
then there exist a constant $\Hradius_0=\Cof{\Hradius_0}[\mass][\oc][\oc'][\int\outsc\d\outmug]$ and a smooth family of unique CMC-surfaces $\{\M<\Hradius>\}_{\Hradius>\Hradius_0}$ foliating $\outM$ outside of a compact set---as stated in Theorems~\ref{Existence_of_a_CMC-foliation} and~\ref{Uniqueness_of_the_CMC_surfaces} \hbox{{\normalfont(}for} \hbox{$\outve=0$\hspace{.05em}\normalfont)}. Furthermore, the eigenvalues $\ewjac<\Hradius>_i$ of the {\normalfont(}negative{\hspace{.05em}\normalfont)} stability operator of the CMC-leave $\M<\Hradius>$ with mean curvature $\H={-}\frac2\Hradius$ satisfy\vspace{-.25em}
\[ \ewjac<\Hradius>_0 < {-}\frac3{2\Hradius^2}, \quad
	\ewjac<\Hradius>_i\ge\frac{6\,\mass}{\Hradius^3} - \mathcal o(\Hradius^{{-}3}), \quad
	\ewjac<\Hradius>_j > \frac5{\Hradius^2}\vspace{-.25em},\qquad
	\forall\,i\in\{1,2,3\},\,j\ge4, \]
where $\mathcal o$ is a function depending on $\mass$, $\oc$, $\oc'$, and $\int\outsc\d\outmug$ with $\Hradius^3\,\mathcal o(\Hradius^{{-}3})\to 0$ for $\Hradius\to\infty$.
\end{theorem}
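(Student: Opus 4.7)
The proof follows the blueprint of Sections~\ref{Regularity_of_the_hypersurfaces} and~\ref{existence_of_the_CMC-foliation}, adapting the key intermediate estimates to the hypothesis that $\outsc\in\Lp^1(\outM)$ is only non-negative rather than pointwise decaying. The starting point, as already indicated just before the theorem statement, is to re-run the proof of Lemma~\ref{Eigenvalues_of_the_Laplacian} \emph{without} discarding the intrinsic scalar curvature term $\sc$ in the Bochner--Lichnerowicz computation: I substitute the Gau\ss\ equation $\sc = \outsc - 2\outric*(\nu,\nu) + \tfrac12\H^2 - |\zFundtrf|_{\g*}^2$ and use the adapted Proposition~\ref{Regularity_of_surfaces_in_asymptotically_flat_spaces} for $\outve=0$ (where the pointwise smallness $\oc(\rad)\to 0$, or the weighted $\Wkp^{3,p}_{\nicefrac{{-}1}2}$-bound via the discussion in Section~\ref{Weaker_decay_assumptions}, replaces the explicit rate $\Hradius^{-\outve}$) to absorb only the $|\zFundtrf|_{\g*}^2$-term into an $\mathcal o(\Hradius^{-3})$ error. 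This yields the modified eigenvalue expansion
\[ \Bigl|\ewlap_i - \tfrac2{\Hradius^2} + \tfrac{6\,\mHaw}{\Hradius^3} - \int_{\M}\bigl(\outric*(\nu,\nu) - \tfrac12\outsc\bigr)\,f_i^2\d\mug\Bigr| \le \mathcal o(\Hradius^{-3}), \qquad i\in\{1,2,3\}, \]
together with the corresponding off-diagonal bound.

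With this expansion in hand, non-negativity of $\outsc$ produces the one-sided counterparts of~\eqref{Stability__t} and~\eqref{Stability__ewjac}: substituting into $\jacobiext*f = \laplace f + (\outric*(\nu,\nu) + |\zFund|_{\g*}^2)\,f$ and tracking signs, the $-\tfrac12\int\outsc\,f_i^2\d\mug$ contribution enters with a favourable sign, giving
\[ \int_{\M}(\jacobiext*\trans g)\,\trans h\d\mug \ge \tfrac{6\,\mHaw}{\Hradius^3}\int_{\M}\trans g\,\trans h\d\mug - \mathcal o(\Hradius^{-3})\,\Vert\trans g\Vert_{\Lp^2(\M)}\,\Vert\trans h\Vert_{\Lp^2(\M)} \]
together with $\ewjac\ge 6\,\mHaw/\Hradius^3 - \mathcal o(\Hradius^{-3})$ for the small-eigenvalue eigenfunctions of $-\jacobiext*$, while~\eqref{Stability__d} is retained verbatim. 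Schoen--Yau's positive mass theorem \cite{schoen1981proof,schoen1989variational} applies under the present hypotheses and supplies $\mass>0$; via the Hawking--ADM comparison recalled in Appendix~\ref{ricci_integrals_and_the_mass}, $\mHaw(\M<\Hradius>)\to\mass>0$ as $\Hradius\to\infty$, so for $\Hradius$ sufficiently large this furnishes both invertibility of $\jacobiext*$ and the asserted eigenvalue lower bound $\ewjac<\Hradius>_i\ge 6\,\mass/\Hradius^3 - \mathcal o(\Hradius^{-3})>0$.

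With invertibility and a definite lower estimate in place, the continuity-method argument of Section~\ref{existence_of_the_CMC-foliation} transfers essentially unchanged: I interpolate $\outg[\atime]*=\schwarzoutg*+\atime\,(\outg*-\schwarzoutg*)$ and show that the set $\intervalI$ of admissible $\atime$ is non-empty, open (via the implicit function theorem as in Lemma~\ref{I_is_a_neighborhood_of_J}, which only requires the one-sided coercivity above), and closed (by continuity of $\Phi$). Crucially, the interpolant $\outg[\atime]*$ inherits the same ADM mass $\mass>0$ as $\outg*$ because the ADM integrand is linear in $\outg*-\eukoutg*$, so $\mHaw$ on large CMC-leaves remains positive throughout $\atime\in\interval*0*1$ and the one-sided Stability estimate stays effective along the deformation. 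Uniqueness within $\mathcal A^{0,\eta}_\Aradius(\c_0,\c_1)$ then follows from the same continuity argument run backwards from $\atime=1$, exactly as in the proof of Theorem~\ref{Uniqueness_of_the_CMC_surfaces}.

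The main obstacle is the degradation of all rates caused by $\outve=0$: each $\mathcal O(\Hradius^{-3-\outve})$ error in the original arguments becomes an $\mathcal o(\Hradius^{-3})$ whose modulus depends on $\oc$, $\oc'$, and $\Vert\outsc\Vert_{\Lp^1(\outM)}$. One must therefore verify that the absorption step at the end of Lemma~\ref{I=J}---bounding $\partial*_\atime(\outx\circ\Phi)$ uniformly over $\atime\in\interval*0*1$---still closes, together with its analogue for $\partial*_\atime(\int\H^2\d\mug)$. This works because the bounds needed there are qualitative smallness relative to the leading $\Hradius$-scale rather than explicit polynomial rates; the sole quantitative input used throughout is the positivity $\mHaw\ge\tfrac{\mass}{2}>0$, which survives both passage to the limit $\Hradius\to\infty$ and the artificial interpolation in $\atime$.
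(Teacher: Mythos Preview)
Your proposal is correct and follows essentially the same route as the paper: modify Lemma~\ref{Eigenvalues_of_the_Laplacian} by retaining the $\outsc$-term via the Gau\ss\ equation, use $\outsc\ge0$ to obtain one-sided versions of \eqref{Stability__t} and \eqref{Stability__ewjac}, invoke the positive mass theorem for $\mass>0$ and hence $\mHaw>0$, and then rerun the continuity argument of Section~\ref{existence_of_the_CMC-foliation}. Your write-up is in fact more detailed than the paper's own treatment, which simply asserts that ``the arguments explained in Section~\ref{existence_of_the_CMC-foliation} therefore prove the following result''; in particular you make explicit the preservation of the ADM mass along the interpolation $\outg[\atime]*$, a point the paper leaves implicit.
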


\appendix\normalsize
\section{Ricci-integrals and the mass}\label{ricci_integrals_and_the_mass}
As explained in Section \ref{Assumptions_and_notation}, \eqref{Definition_of_mass} gives a characterization of the ADM-mass of an asymptotically flat manifold. It is well-known that this mass is well-defined, see e.\,g.\ \cite{schoen1988existence,Corvino__On_the_center_of_mass_of_isolated_systems,miao2013evaluation}. However, in order to recall the convergence rate, we will repeat the proof nevertheless in Lemma \ref{Ric(nu,nu)-integrals}. Furthermore, we want to recall that this mass is the limit of Hawking masses. To see this, we use the Gau\ss\ equation in the definition of $\mass$ to see that for any sufficiently large $\rradius$ and any $\Ck^2_{\frac12+\ve}$-asymptotically flat Riemannian manifold $(\outM,\outg*,\outx)$
\[ \vert\frac\rradius{8\pi}\int_{\sphere^2_\rradius}\frac\outsc2-\outric*(\nu,\nu)\d\mug - \mHaw<\rradius>\vert
	\le \vert\frac{\sqrt{\volume{\M}}}{32\pi^{\frac32}}\int_{\sphere^2_\rradius}\sc-\frac{\H^2}2+\trtr{\zFundtrf<\rradius>}{\zFundtrf<\rradius>} \d\mug - \mHaw<\rradius>\vert + \frac C{\rradius^{\outve}} \]
holds, where $\nu$ and $\mHaw<\rradius>$ denote the unit normal and Hawking mass of $\sphere^2_\rradius:=\outx^{-1}(\sphere^2_\rradius(0))\hookrightarrow\outM$, respectively. Using the decay assumption on $\outg*-\eukoutg*$ and the Gau\ss-Bonnet theorem, this implies
\begin{equation*} \mass = \lim_{\rradius\to\infty} \mHaw(\sphere^2_\rradius(0)). \tag{\ref{Definition_of_mass}'} \end{equation*}
It should be noted that this definition of mass is a purely geometric definition, which can be seen by replacing $\sphere^2_\rradius$ by the CMC-leaf $\M<\Hradius>$ and using Proposition \ref{Ric(nu,nu)-integrals}.\smallskip\pagebreak[3]

Let use recall the proof that the mass is well-defined if the scalar curvature is integrable.
\begin{proposition}[Mass is well-defined or \texorpdfstring{$\outric*(\nu,\nu)$}{Ric(nu,nu)}-integrals]\label{Ric(nu,nu)-integrals}
Let $(\outM,\outg*,\outx)$ be a $\Ck^2_{\frac12+\ve}$-asymptotically flat Riemannian manifold. The mass 
\begin{equation*} \mass := \lim_{\rradius\to\infty} \frac{{-}\rradius}{8\pi} \int_{\sphere^2_\rradius} \frac\outsc2 - \outric*(\nu,\nu) \d\mug \tag{\ref{Definition_of_mass}}\end{equation*}
of $(\outM,\outg*)$ is well-defined, where $\sphere^2_{\rradius}:=\outx^{-1}(\sphere^2_{\rradius}(0))$. Assume $\M\hookrightarrow\outM\setminus\outsymbol L$ is a closed hypersurface enclosing $\outsymbol L$, i.\,e.\ $(\outM\setminus\outsymbol L)\setminus\M$ consists of two connected subsets of $\outM$ and $\outsymbol L$ is within the relatively compact one. For any constant $\c\ge0$ there is a constant $C=\Cof[\ve][\outc][\c]$ such that the existence of a vector $\centerz\in\R^3$ with
\[ \vert\centerz\,\vert \le \c\,\rradius,\qquad
	\max_{\M}\hspace{.05em}\vert\hspace{.05em}\nu-\frac{\outx-\centerz}\rradius\hspace{.05em}\vert \le \frac{\c}{\rradius^{\frac{1+\outve}2}},\qquad
	\volume{\M} \le \c\,\rradius^2, \]
implies
\[ \vert \mass + \frac{\rradius}{8\pi} \int_{\M}(\outric*(\nu,\nu) - \frac\outsc2) \d\mug \vert \le \frac C{\rradius^{\outve}} \]
where $\rradius:=\min_{\M}\rad$, $\nu$ and $\mug$ denote the minimal distance to the coordinate origin, the outer unit normal and the surface measure of $\M\hookrightarrow\outM$, respectively.
\end{proposition}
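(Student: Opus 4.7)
The plan is to exploit the contracted second Bianchi identity for the Einstein tensor $\einstein*:=\outric*-\tfrac12\outsc\,\outg*$ (which is divergence-free) via a radial vector field, converting the curvature integral over $\M$ into a boundary integral at infinity plus a controllable bulk correction. I fix $\centerz\in\R^3$ with $\vert\centerz\,\vert\le\c\,\rradius$ (the hypothesis already supplies such a point, e.g.\ the Euclidean coordinate center of $\M$), work in the asymptotic chart, and introduce the $1$-form $V_j:=\einstein_{jk}\,(\outx^k-\centerz^k)$. A direct computation using the contracted Bianchi identity, the dimension-three identity $\trace\einstein*=-\tfrac{\outsc}2$, and the decay of the Christoffel symbols $\outlevi_{ij}^k$ yields the pointwise identity
\[ \outdiv V = -\tfrac{\outsc}2 + E,\qquad \vert E\vert\le C\,\rad\,\vert\einstein*\vert_{\outg*}\,\vert\outlevi_{ij}^k\vert\le \frac C{\rad^{3+2\outve}}, \]
by the pointwise decay in \eqref{Decay_assumptions_g}.

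Applying the divergence theorem on the coordinate annulus $\{R_1\le\rad\le R_2\}$ gives
\[ \int_{\sphere^2_{R_2}}\! V(\nu)\d\mug - \int_{\sphere^2_{R_1}}\! V(\nu)\d\mug = \int_{\{R_1\le\rad\le R_2\}}\outdiv V\d\outmug. \]
Since $|\outsc|\le C\,\rad^{-3-\outve}$ and $|E|\le C\,\rad^{-3-2\outve}$ are both integrable at infinity, the right-hand side is bounded by $C\,R_1^{-\outve}$, so the map $R\mapsto\int_{\sphere^2_R}V(\nu)\d\mug$ is Cauchy as $R\to\infty$. The asymptotic estimate $|\nu-\outx/R|\le C\,R^{-1/2-\outve}$ on coordinate spheres yields
\[ \Bigl| \int_{\sphere^2_R} V(\nu)\d\mug - R\int_{\sphere^2_R}\einstein*(\nu,\nu)\d\mug \Bigr| \le C\,R^{-2\outve}, \]
so the limit defining $\mass$ in \eqref{Definition_of_mass} exists and equals $-\tfrac1{8\pi}\lim_R\int_{\sphere^2_R}V(\nu)\d\mug$.

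For the surface $\M$ enclosing $\outsymbol L$, I apply the divergence theorem to $V$ on the region $\Omega_R$ between $\M$ and $\sphere^2_R$, and let $R\to\infty$. The outer boundary integral converges to $-8\pi\,\mass$ at rate $R^{-\outve}$ by the previous paragraph; the bulk integral over $\Omega_\infty$ is the tail of the two convergent integrals estimated above and is therefore bounded by $C\,\rradius^{-\outve}$. For the inner boundary on $\M$, the hypotheses $|\nu-(\outx-\centerz)/\rradius|\le\c\,\rradius^{-(1+\outve)/2}$, $|\einstein*|_{\outg*}\le C\,\rradius^{-5/2-\outve}$, and $\volume\M\le\c\,\rradius^2$ together imply the pointwise bound $\vert V(\nu)-\rradius\,\einstein*(\nu,\nu)\vert\le C\,\rradius^{-2-3\outve/2}$, hence
\[ \Bigl| \int_\M V(\nu)\d\mug - \rradius\int_\M\einstein*(\nu,\nu)\d\mug \Bigr| \le C\,\rradius^{-3\outve/2}\le C\,\rradius^{-\outve}, \]
and adding the three estimates yields the claimed bound. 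The only real obstacle is the careful bookkeeping of error exponents, in particular ensuring each bulk and boundary error sits below $\rradius^{-\outve}$; this ultimately traces back to the decay of the Einstein tensor and the precision allowed by the hypothesis on the radial approximation of $\nu$.
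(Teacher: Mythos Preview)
Your approach is correct and is essentially the same as the paper's: apply the divergence theorem to the divergence-free Einstein tensor contracted with a radial vector field. The only organizational difference is that the paper takes the position field $\outx$ (centered at the origin), first treats the case $\centerz=0$, and then disposes of the offset by invoking Lemma~\ref{Ric(nu,e_i)-integrals} to bound $\int_\M\einstein*(\nu,\centerz\,)\d\mug$; you instead center the vector field at $\centerz$ from the start, which folds that correction directly into the outer-sphere comparison and bypasses the separate lemma.

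One small correction: on the coordinate sphere $\sphere^2_R(0)$, your displayed bound $\bigl|\int V(\nu)-R\int\einstein*(\nu,\nu)\bigr|\le C\,R^{-2\outve}$ omits the contribution $\int_{\sphere^2_R}\einstein*(\nu,\centerz\,)\d\mug$, which by the crude pointwise decay is only $O(|\centerz\,|\,R^{-\frac12-\outve})$, not $O(R^{-2\outve})$. Since $|\centerz\,|$ is fixed and you send $R\to\infty$, this still tends to zero and the identification of the limit with $\mass$ (and hence the rest of the argument) is unaffected. Also double-check the sign in ``$-\tfrac1{8\pi}\lim_R\int V(\nu)$'': with the paper's conventions the limit of $R\int_{\sphere^2_R}\einstein*(\nu,\nu)\d\mug$ is $+8\pi\,\mass$.
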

\begin{proof}
First, let us assume that $\centerz=0$ and identify $\outM\setminus\outsymbol L$ with $\R^3\setminus B_1(0)$. By the second Bianchi identity, we know that $\outric*-\frac12\,\outsc\,\outg*$ is divergence-free. Denoting with $U\subseteq\R^3$ the set \lq enclosed\rq\ by $\outx(\M)$, we conclude
\begin{align*}\hspace{4em}&\hspace{-4em}
 \vert S\int_{\sphere^2_S(0)} \outric*(\nu,\nu)- \frac{\outsc}2 \d\mug - \rradius\int_{\M} \outric*(\nu,\nu) - \frac{\outsc}2\d\mug\vert \\
	\le{}& \vert \int_{\sphere^2_S(0)} \outric*(\nu,\outx)- \frac{\outsc}2\outg*(\nu,\outx) \d\mug - \int_{\M} \outric*(\nu,\outx)- \frac{\outsc}2\outg*(\nu,\outx)\d\mug\vert + \frac C{\rradius^{\outve}} \\
	\le{}& \vert\int_{U\setminus B_S(0)} (\outric*-\frac\outsc2\;\outg*)(\outlevi*x) \d\outmug \vert + \frac C{\rradius^{\outve}}
	\le \int_{U\setminus B_S(0)} \frac{\vert\outsc\vert}2 \d\outmug + \frac C{\rradius^{\outve}}
\end{align*}
for any $S>\rradius$, where $\outmug$ denotes the volume measure of $\outg*$ $\nu$ and $\mu$ denote the corresponding normal and surface measure of $\M\hookrightarrow\outM$ or $\sphere^2_S(0)\hookrightarrow\outM$, respectively. By the assumption on $\outsc$, this implies that the mass is well-defined and the claim for the special case of $\centerz=0$. Thus, the proposition is proven if
\[ \vert\int_{\M} (\outric*-\frac\outsc2\;\outg*) (\nu,\centerz\,) \d\mug \vert \le \frac C{\rradius^{\outve}} \]
holds for any $\M$ satisfying the above assumptions. However, we prove this in Lemma \ref{Ric(nu,e_i)-integrals}.\pagebreak[2]
\end{proof}
As we saw in the proof of the last proposition, we need additionally to control simpler $\outric*$-integrals to get the estimate on the mass integral for a large class of surfaces. Furthermore, we will need this technical lemma again later.
\begin{lemma}[\texorpdfstring{$\outric*(\nu,\outsymbol e_i)$}{Ric(nu,e_i)}-integrals]\label{Ric(nu,e_i)-integrals}
Let $(\outM,\outg*,\outx)$ be a $\Ck^2_{\frac12+\ve}$-asymptotically flat Riemannian manifold. Assume $\M\hookrightarrow\outM\setminus\outsymbol L$ is a closed hypersurface enclosing $\outsymbol L$, i.\,e.\ $\outM\setminus\M$ consists of two connected subsets of $\outM$ and $\outsymbol L$ is within the compact one. There is a constant $C=\Cof[\ve][\outc]$ such that
\[ \vert \int_{\M}\outric*(\nu,\outsymbol e_i) - \frac\outsc2\,\nu_i \d\mug \vert \le \frac C{\rradius^{1+\outve}} \]
where $\rradius:=\min_{\M}\rad$, $\nu$ and $\mug$ denote the minimal distance to the coordinate origin, the outer unit normal and the surface measure of $\M\hookrightarrow\outM$, respectively.
\end{lemma}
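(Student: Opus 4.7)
The integrand equals the Einstein tensor applied to $(\nu,\outsymbol e_i)$, where $\outsymbol e_i:=\partial/\partial\outx^i$ and $\nu_i=\outg*(\nu,\outsymbol e_i)$. The idea is to write this as the divergence of a one-form on the exterior region and apply the divergence theorem, letting the outer boundary tend to infinity. This is essentially the same argument that shows the ADM mass is independent of the sphere in Proposition~\ref{Ric(nu,nu)-integrals}, but applied with $\outsymbol e_i$ in place of the radial vector field.

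\textbf{Step 1 (Einstein tensor and its divergence).} Let $\einstein*:=\outric*-\frac12\outsc\,\outg*$. The twice-contracted second Bianchi identity gives $\outdiv\einstein*\equiv0$. With $\nu_i=\outg*(\nu,\outsymbol e_i)$, the integrand is exactly $\einstein*(\nu,\outsymbol e_i)$.

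\textbf{Step 2 (Divergence of the test one-form).} For each $i\in\{1,2,3\}$, define the one-form $\omega_i:=\einstein*(\cdot,\outsymbol e_i)$ on $\outM\setminus\overline L$. Since $\outdiv\einstein*=0$, a direct computation yields
\[ \outdiv\omega_i = \einstein^{kl}\,\outlevi_k(\outsymbol e_i)_l = \langle\einstein*,\outlevi\outsymbol e_i\rangle_{\outg*}. \]
The components of $\outlevi\outsymbol e_i$ are the Christoffel symbols $\outlevi_{ij}^k$ of $\outg*$ in the chart $\outx$, so the decay assumptions \eqref{Decay_assumptions_g} give
\[ |\einstein*|_{\outg*}\le\frac C{\rad^{\frac52+\outve}},\qquad |\outlevi\outsymbol e_i|_{\outg*}\le\frac C{\rad^{\frac32+\outve}},\qquad |\outdiv\omega_i|\le\frac C{\rad^{4+2\outve}}. \]

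\textbf{Step 3 (Divergence theorem on a shell).} Let $U\subseteq\outM$ be the relatively compact region bounded by $\M$ (so $\outsymbol L\subseteq U$). For $S>\max_{\M}\rad$ set $V_S:=\outx^{-1}(B_S(0))\setminus\overline U$. Applying the divergence theorem to $\omega_i$ on $V_S$ yields
\[ \int_{\M}\einstein*(\nu,\outsymbol e_i)\d\mug = \int_{\sphere^2_S}\einstein*(\nu<S>,\outsymbol e_i)\d\mug<S> - \int_{V_S}\langle\einstein*,\outlevi\outsymbol e_i\rangle_{\outg*}\d\outmug. \]

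\textbf{Step 4 (Pass $S\to\infty$).} The boundary term at infinity vanishes, since
\[ \left|\int_{\sphere^2_S}\einstein*(\nu<S>,\outsymbol e_i)\d\mug<S>\right| \le C\,S^2\cdot S^{-\frac52-\outve} = C\,S^{-\frac12-\outve}\xrightarrow{S\to\infty}0. \]
The bulk term converges absolutely (the integrand is $O(\rad^{-4-2\outve})$, integrable in $\R^3$) and is estimated using $\d\outmug\le(1+C\rad^{-\frac12-\outve})\d\Leb^3$ by
\[ \left|\int_{V_\infty}\langle\einstein*,\outlevi\outsymbol e_i\rangle\d\outmug\right| \le \int_{\{\rad\ge\rradius\}}\frac C{\rad^{4+2\outve}}\d\Leb^3 \le \frac C{\rradius^{1+2\outve}} \le \frac C{\rradius^{1+\outve}}. \]
Combining Steps~3 and~4 yields the claimed bound on $\int_{\M}\outric*(\nu,\outsymbol e_i)-\frac\outsc2\nu_i\d\mug$.

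\textbf{Main obstacle.} None beyond careful bookkeeping. The only point requiring a moment's care is the identity $\outdiv(\einstein*(\cdot,X))=\langle\einstein*,\outlevi X\rangle$ for a general vector field $X$ (which relies on $\outdiv\einstein*=0$), and the verification that the decay rates of $\outric*$, $\outsc$, and the Christoffel symbols indeed combine to give an integrand that is strictly better than $\rad^{-3}$ at infinity, so that the exterior integral converges with the extra $\rradius^{-1-2\outve}$ factor.
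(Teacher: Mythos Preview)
Your proof is correct and follows essentially the same route as the paper: both use that the Einstein tensor is divergence-free, apply the divergence theorem on the region between $\M$ and a large coordinate sphere, estimate the bulk term via $|\einstein*|\,|\outlevi\outsymbol e_i|\le C\rad^{-4-2\outve}$, and let the outer sphere go to infinity. The paper is slightly more terse and records only the weaker bound $C\rradius^{-1-\outve}$, but the computations are identical.
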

\begin{proof}
By the second Bianchi identity, we know that $\outric*-\frac12\,\outsc\,\outg*$ is divergence free. Denoting with $U\subseteq\R^3$ the set enclosed by $\outx(\M)$, we conclude
\begin{align*}\hspace{4em}&\hspace{-4em}
 \vert\int_{\sphere^2_S(0)}\outric*(\nu,\outsymbol e_i) - \frac\outsc2\,\nu_i \d\mug - \int_{\M}\outric*(\nu,\outsymbol e_i) - \frac\outsc2\,\nu_i \d\mug \vert \\
	={}& \vert\int_{U\setminus B_S(0)}(\outric*-\frac\outsc2\;\outg*)(\outlevi*\outsymbol e_i) \d\outmug \vert \\
	\le{}& \int_{\rradius}^S\int_{\sphere^2_r(0)} \vert\outric*-\frac\outsc2\;\outg*\vert_{\outg*}\,\vert\outlevi*\outsymbol e_i\vert \d\mug \d r \le \frac C{\rradius^{1+\outve}}
\end{align*}
for every $S>\rradius$, where $\outmug$ denotes the volume measure of $\outg*$ $\nu$ and $\mu$ denote the corresponding normal and surface measure of $\M\hookrightarrow\outM$ or $\sphere^2_r(0)\hookrightarrow\outM$, respectively. This proves the claim, as the assumption on the curvature $\outric*$ implies
\[ \vert\int_{\sphere^2_R(0)}\outric*(\nu,\outsymbol e_i) - \frac\outsc2\,\nu_i \d\mug \vert \le \frac C{R^{\frac12+\outve}} \xrightarrow{R\to\infty} 0. \pagebreak[3]\qedhere \]%
\end{proof}%
In the proof of Theorem \ref{Existence_of_a_CMC-foliation}, we had to prove that the constructed CMC-cover is a foliation. To do so, we used that the integral $\vert\int\outric*(\nu,\nu)\,\nu_i \d\mug \vert$ is sufficiently small. This is done in the following Lemma which (for this reason) we call \lq foliation lemma\rq.
\begin{lemma}[Foliation lemma or \texorpdfstring{$\outric*(\nu,\nu)\,\nu_i$}{Ric(nu,nu) nu_i}-integrals]\label{Ric(nu,nu)nu_i-integrals}
Assume that $\M\hookrightarrow(\outM,\outg,\outx)$ satisfies the assumptions of Proposition \ref{Ric(nu,nu)-integrals}. There is a constant $C=\Cof[\outve][\oc][\c]$ such that
\[ \max_{\M} \vert\outx - \centerz\,\vert \le \rradius + \c\,\rradius^{\frac12-\outve} \]implies
\[ \vert\int(\outric*(\nu,\nu) - \frac\outsc2)\,\outx_i \d\mug \vert \le \frac C{\rradius^{\outve}}, \quad
	\vert \frac{m\,\centerz_i}\rradius - \frac{\rradius}{8\pi}\int(\outric*(\nu,\nu) - \frac\outsc2)\,\nu_i \d\mug \vert \le \frac C{\rradius^{\outve}}. \]
\end{lemma}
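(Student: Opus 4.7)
\textbf{Proposal for Lemma \ref{Ric(nu,nu)nu_i-integrals}.}
The plan is to first show that the two estimates are equivalent, then prove the first one by a Stokes-type argument exploiting the divergence-freeness of the Einstein tensor $T:=\outric*-\tfrac{1}{2}\outsc\,\outg*$.

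\emph{Step 1 (Equivalence).}
Using the pointwise bound $\max_\M|\nu-(\outx-\centerz)/\rradius|\le c\,\rradius^{-(1+\ve)/2}$, the estimates $|T|\le c\,\rradius^{-5/2-\ve}$, $|\M|\le c\,\rradius^2$, and the mass formula $\int_\M T(\nu,\nu)\d\mug=-8\pi m/\rradius+O(\rradius^{-1-\ve})$ from Proposition \ref{Ric(nu,nu)-integrals}, I would expand $\outx_i=(\outx_i-\centerz_i)+\centerz_i\approx \rradius\,\nu_i+\centerz_i$ under the integral and collect terms to get
\[
 \frac{\rradius}{8\pi}\int_\M T(\nu,\nu)\,\nu_i\d\mug
   = \frac{1}{8\pi}\int_\M T(\nu,\nu)\,\outx_i\d\mug + \frac{m\,\centerz_i}{\rradius} + O(\rradius^{-\ve}).
\]
Thus the second inequality reduces to the first.

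\emph{Step 2 (The first inequality via a divergence argument).}
Consider the vector field $X^l:=(\outx^l-\centerz^l)\,\outx_i$ on $\outM$. On $\M$ one has $T(\nu,X)=\outx_i\,T(\nu,\outx-\centerz)\approx \rradius\,\outx_i\,T(\nu,\nu)$, with the approximation error controlled by $|\nu-(\outx-\centerz)/\rradius|$, so $\int_\M T(\nu,X)\d\mug\approx\rradius\int_\M T(\nu,\nu)\,\outx_i\d\mug$ up to an error of order $\rradius^{1-\ve}$. Since $T$ is divergence-free (second Bianchi identity), for any large $S$ the divergence theorem on the region $U_S$ (exterior of $\M$ inside $B_S(\centerz)$) gives
\[
 \int_{\sphere^2_S(\centerz)} T(\nu_S,X)\d\mug - \int_\M T(\nu,X)\d\mug
   = \int_{U_S} T^{jl}\outlevi_j X_l \d\outmug.
\]
A direct computation shows $T^{jl}\outlevi_j X_l = -\tfrac{1}{2}\outsc\,\outx_i + T(\outsymbol e_i,\outx-\centerz) + E$, where $|E|\le c|T||\outlevi_{\cdot\cdot}^{\cdot}||X|$ is integrable over the exterior.

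\emph{Step 3 (Controlling the remaining terms).}
The $\outsc$--term is integrable at infinity thanks to the improved decay $|\outsc|\le c\,\rad^{-3-\ve}$ obtained from \eqref{Decay_assumptions_g}; the Christoffel contribution $E$ integrates to $O(\rradius^{-\ve})$. The linearly-growing factor in $T(\outsymbol e_i,\outx-\centerz)$ is handled by splitting $\outx-\centerz = \outx - \outsymbol{0}\cdot\centerz$ and invoking Lemma \ref{Ric(nu,e_i)-integrals} on the reference sphere $\sphere^2_S(\centerz)$ (and on the intermediate Euclidean sphere $\sphere^2_\rradius(\centerz)$), together with the assumption $\max_\M|\outx-\centerz|\le\rradius+c\rradius^{1/2-\ve}$ which lets me compare $\int_\M$ with $\int_{\sphere^2_\rradius(\centerz)}$ via a thin-shell estimate of volume $c\rradius^{5/2-\ve}$. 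Passing to the limit $S\to\infty$ (or choosing $S$ suitably large) and bookkeeping all contributions yields $|\int_\M T(\nu,X)\d\mug|\le C\,\rradius^{1-\ve}$, hence the first inequality.

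\emph{Main obstacle.}
The delicate point is the Euclidean piece $-\tfrac12\outsc\,\outx_i+T(\outsymbol e_i,\outx-\centerz)$ of the bulk integrand, which has linearly growing factors and whose naive pointwise bound yields a divergent integral over $U_\infty$. The required convergence is obtained by not estimating the two summands separately but by using the identity $T=\outric-\tfrac12\outsc\,\outg$ to rewrite $T(\outsymbol e_i,\outx-\centerz)-\tfrac12\outsc(\outx_i-\centerz_i)=G(\outsymbol e_i,\outx-\centerz)$ modulo $O(|\outg-\eukoutg|\cdot|\outsc|\cdot|\outx-\centerz|)$, and then applying Lemma \ref{Ric(nu,e_i)-integrals} on the Euclidean reference sphere $\sphere^2_\rradius(\centerz)$ to absorb the remaining divergent part into the boundary contribution which provides the mass correction $m\,\centerz_i/\rradius$.
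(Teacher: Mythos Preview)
Your Step~1 reduction and the general divergence-theorem setup in Step~2 are fine, but the argument breaks down in Step~3. Under $\Ck^2_{\frac12+\ve}$ decay you only have $|T|\le C\rad^{-\frac52-\ve}$, so the bulk term $T(\outsymbol e_i,\outx-\centerz)$ is merely $O(\rad^{-\frac32-\ve})$ and is \emph{not} integrable over the unbounded exterior in three dimensions. Foliating by spheres and invoking Lemma~\ref{Ric(nu,e_i)-integrals} gives $\big|\int_{\sphere^2_s(\centerz)}T(\nu,\outsymbol e_i)\d\mug\big|\le Cs^{-1-\ve}$, so the radial integral you would need is $\int_\rradius^S s\cdot s^{-1-\ve}\d s\sim S^{1-\ve}$, which diverges as $S\to\infty$. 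The boundary term at $\sphere^2_S(\centerz)$ is precisely (up to the factor $S$) the quantity you are trying to bound, so you cannot use it to absorb this divergence without circularity. Your ``Main obstacle'' paragraph does not resolve this: the rewriting $T(\outsymbol e_i,\outx-\centerz)-\tfrac12\outsc(\outx_i-\centerz_i)=G(\outsymbol e_i,\outx-\centerz)$ is vacuous because $T$ already \emph{is} the Einstein tensor $G$.

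The paper avoids this obstruction by a different mechanism. After the thin-shell comparison (which you also do) it does \emph{not} try to control the exterior bulk integral. Instead it defines
\[
 f(\rradius):=\int_{\sphere^2_\rradius(\centerz)} T\Big(\nu,\frac{\outx-\centerz}{\rradius}\Big)\frac{\outx_i}{\rradius}\d\mug
\]
and differentiates in $\rradius$. The point is that the $\rradius$-derivative produces exactly the surface integral $\rradius^{-1}\int_{\sphere^2_\rradius}T(\nu,\outsymbol e_i)\d\mug$ plus a scalar-curvature term; Lemma~\ref{Ric(nu,e_i)-integrals} then gives the differential inequality $|f'(\rradius)+\tfrac{2}{\rradius}f(\rradius)|\le C\rradius^{-2-\ve}$, and the ODE lemma (Proposition~\ref{Ordinary_differential__asymptotic__equations}) together with $f(\rradius)\to0$ forces $|f(\rradius)|\le C\rradius^{-1-\ve}$. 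This ODE step is the missing ingredient: it converts the non-integrable pointwise bound into a usable decay estimate by exploiting the sign structure of the linear ODE rather than brute-force integrability.
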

\begin{proof}
It is sufficient to prove the first inequality due to Proposition \ref{Ric(nu,nu)-integrals}. Let
\gdef\radz{|\outx-\centerz\,|}
$R:=\max_{\M}\radz$ denote the distance from the center of $\M$ and its maximum on $\M$. Again using the divergence theorem, we get
\begin{align*}\hspace{4em}&\hspace{-4em}
 \vert\int_{\sphere^2_\rradius(\centerz\,)}(\outric*(\nu,\nu) - \frac\outsc2)\,\outx_i \d\mug - \int_{\M}(\outric*(\nu,\nu) - \frac\outsc2)\,\outx_i \d\mug \vert \\
	={}& \vert\int_U(\outric*-\frac{\outsc}2\,\outg*)(\outlevi*(\frac{\outx-\centerz}{\radz}\,\outx))\d\outmug \vert + \frac C{\rradius^{\outve}} \\
	\le{}& \int_\rradius^{\rradius+c\rradius^{\frac12-\outve}}\int_{\sphere^2_r(\centerz\,)} \frac C{r^{\frac52+\outve}} \d\mug \d r + \frac C{\rradius^{\outve}} \le \frac C{\rradius^{\outve}}
\end{align*}
where we used the same notation as in Lemma \ref{Ric(nu,e_i)-integrals}. Thus, it is sufficient to prove the claim for $\M=\sphere^2_\rradius(\centerz\,)$.

Define the function $f$ for any sufficiently large $\rradius\gg1$ by
\[ f(\rradius) := \int_{\sphere^2_\rradius(\centerz\,)} (\outric*-\frac{\outsc}2\,\outg*)(\nu,\frac{\outx-\centerz\,}{\rradius}) \frac{\outx_i}\rradius \d\mug \]
and note that a priori this function seems only to be continuous.%
Now, we prove that this function is differentiable and (asymptotically) satisfies an ordinary differential equation. Then, solving this equation will prove the claim.\pagebreak[2]

Again using the divergence theorem, we see\vspace{-.25em}
\[ f(R') - f(R'') - \int_{R''}^{R'}\int_{\sphere^2_\rradius(0)}(\outric*-\frac{\outsc}2\,\outg*)(\outlevi*(\frac{(\outx-\centerz\,)\,\outx_i}{\radz^2})) \d\mug \d\rradius = \int_{R''}^{R'} \text{err}_\rradius \d\rradius, \]
for any $R'>R''$, where $\text{err}_\rradius$ is a error term with $\vert\text{err}_\rradius\vert\le\nicefrac C{\rradius^{2+\outve}}$. Note that we do not evaluate integral on the right hand side. \pagebreak[3]Therefore, $f$ is differentiable and conclude using $\vert\outlevi*(\outx-\centerz\,)-\id\vert\le\nicefrac C{\rad^{\frac12+\ve}}$ that
\begin{align*}
 \vert\partial[\rradius]@{f(\rradius)} + \frac2\rradius\,f(\rradius) \vert
	\le{}& \vert \int_{\sphere^2_\rradius(\centerz\,)}(\outric*-\frac{\outsc}2\hspace{.05em}\outg*)(\outx-\centerz,\outlevi*(\frac{\outx_i}{\radz^2})+\frac{2\outx_i\hspace{.05em}(\outx-\centerz\,)}{\radz^4}) \d\mug \vert \\
		& + \frac1\rradius\int\vert\outsc\vert\d\mug + \frac C{\rradius^{2+\outve}} \\
	\le{}& \vert\int_{\sphere^2_\rradius(\centerz\,)} (\outric*-\frac{\outsc}2\,\outg*)(\nu,\frac{\outsymbol e_i}\rradius)\vert + \frac1\rradius\int\vert\outsc\vert\d\mug + \frac C{\rradius^{2+\outve}}.
\end{align*}
Using Lemma \ref{Ric(nu,e_i)-integrals}, we conclude
\[ \vert\partial[\rradius]@{f(\rradius)} + \frac{2\,f(\rradius)}\rradius \vert \le \frac C{\rradius^{2+\outve}}. \]
Solving this ordinary differential (asymptotic) equation (this is done in more detail in Proposition \ref{Ordinary_differential__asymptotic__equations}), we conclude for some $\eta\in\R$ that
\[ \vert f(\rradius) - \eta\vert \le \frac1{\rradius^{1+\outve}}. \]
We conclude the claim by using $f(\rradius)\to0$ for $\rradius\to\infty$.\pagebreak[3]
\end{proof}

\section[Integrals for the centers]{Integrals for the centers of the leaves and of the mass}
In the proof of Lemma \ref{I=J}, we control the derivative of the Euclidean coordinate center with respect to the radius by the integral
\[ I_i(\M) := \int_{\M<\Hradius>} \Hradius\,\outmomden(\nu<\Hradius>)\,\nu_i + \zFund*(\nu<\Hradius>,\outsymbol e_i-\nu<\Hradius>_i\,\nu<\Hradius>) - \troutzFund\,\nu<\Hradius>_i \d\mug \]
and conclude that the the CMC-surfaces exist if this term is of order $\Hradius^{1-\outve}$. Here, $2\outzFund*=\schwarzoutg*-\outg*$ and $\outmomden:=\outdiv(\outtr\,\outzFund-\outzFund*)$ are artificial quantities and $\M<\Hradius>$ is a surface of constant mean curvature $\nicefrac{{-}2}\Hradius$ with respect to $\outg[\atime]*:=\schwarzoutg+\atime\,(\outg*-\schwarzoutg*)$ satisfying the assumptions of Proposition \ref{Regularity_of_surfaces_in_asymptotically_flat_spaces}. Furthermore in the proof of Theorem \ref{The_centers_of_mass}, we need that $I_i(\M) - I_i(\sphere^2_\Hradius(0))$ is of order $\Hradius^{1+\ve}$ if $\M$ is additionally an asymptotically symmetric surface with center $\centerz$ of order $\Hradius^0$.

To show that $I_i(\M)$ decays as explained, we use the trick used throughout the literature to prove that the ADM-center of mass is well-defined if $(\outM,\outg*,\outx)$ satisfies the $\Ck^2_{\frac32+\outve}$-Regge-Teitel\-boim conditions. Let us begin by proving that $I_i(\M)$ is of order $\Hradius^{1-\outve}$.
\begin{lemma}[Estimating the center (implicitly)]\label{Estimating_the_center__implicit}
Let $(\outM,\outg*,\outx)$ be a three-di\-men\-sio\-nal $\Ck^2_{\frac12+\ve}$-asymptotically flat Riemannian manifold. Assume that $(\M,\g*)\hookrightarrow(\outM,\outg*)$ satisfies the assumptions of Proposition \ref{Regularity_of_surfaces_in_asymptotically_flat_spaces} with $\Aradius>\Aradius_0$. There is a constant $C=\Cof[\outve][\oc][\eta][\c_1][\c_2]$ such that
\[ \vert\int_{\M} \Aradius\,\outmomden(\nu)\,\nu_i + \zFund*(\nu,\outsymbol e_i-\nu_i\,\nu) - \troutzFund\,\nu_i \d\mug\vert \le C\,\Aradius^{1-\outve}, \]
where $2\outzFund*=\schwarzoutg*-\outg*$ and $\outmomden:=\outdiv(\outtr\,\outzFund*\,\outg*-\outzFund*)$ are artificial quantities and $\nu$ and $\mu$ denote the outer unit normal and the surfaces measure of $\M\hookrightarrow(\outM,\outg*)$, respectively.
\end{lemma}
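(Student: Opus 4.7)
The plan is to convert $I_i(\M)$ into a volume integral via the divergence theorem applied to a vector field built from $\outzFund*$ and the Euclidean position, and then to bound the resulting boundary-at-infinity and bulk pieces using the pointwise decay of $\outzFund*$ and $\outmomden*$.

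\textbf{Reduction to a coordinate sphere.} By Proposition~\ref{Regularity_of_surfaces_in_asymptotically_flat_spaces}, we may write $\M=\graph\graphf$ over the coordinate sphere $\sphere^2_\Aradius(\centerz\,)$ with $\vert\centerz\,\vert\le C\Aradius$ and $\Vert\graphf\Vert_{\Wkp^{2,\infty}(\sphere^2_\Aradius(\centerz\,))}\le C\Aradius^{\frac12-\ve}$. Using the pointwise decay $\vert\outzFund*\vert\le C\rad^{-\frac12-\ve}$, $\vert\outlevi*\outzFund*\vert\le C\rad^{-\frac32-\ve}$ and $\vert\outmomden*\vert\le C\rad^{-\frac32-\ve}$, a direct comparison via the graph parametrisation shows that $I_i(\M)$ differs from the analogous integral taken over $\sphere^2_\Aradius(\centerz\,)$ by an admissible $O(\Aradius^{1-\ve})$. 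It therefore suffices to estimate the integral on the Euclidean coordinate sphere.

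\textbf{Divergence identity.} On $\sphere^2_\Aradius(\centerz\,)$ we have $\Aradius\,\nu=(\outx-\centerz\,)+O(\Aradius^{\frac12-\ve})$. Consider the vector field $V$ on $\outM$ with components $V^k:=(\outtr\,\outzFund*\,\outg^{kj}-\outzFund^{kj})(\outx_j-\centerz_j)$, so that $\outg(V,\nu)$ encodes (via the raised-index formula) the combination $\outtr\,\outzFund*\,\nu_i-\outzFund(\nu,e_i)$ multiplied by the radial weight. The Leibniz-rule computation
\[ \outdiv V = \outmomden^j(\outx_j-\centerz_j) + (\outtr\,\outzFund*\,\outg^{kj}-\outzFund^{kj})\,\outlevi_k(\outx_j-\centerz_j), \]
combined with the identity $\outlevi_k(\outx_j-\centerz_j)=\outg_{kj}+O(\rad^{-\frac12-\ve})$, lets me match $\outg(V,\nu)$ on the coordinate sphere with the full integrand of $I_i(\M)$ up to pointwise errors of order $\rad^{-\frac52-\ve}$, which integrate to an admissible $O(\Aradius^{-\frac12-\ve})$.

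\textbf{Boundary at infinity and bulk bound.} Applying the divergence theorem in the region $U_R$ bounded by $\sphere^2_\Aradius(\centerz\,)$ and $\sphere^2_R(0)$ reduces the problem to bounding a boundary integral at $\sphere^2_R(0)$ and the bulk integral $\int_{U_R}\outdiv V\,\d\outmug$. For the boundary at infinity, decompose $\outzFund*=\frac12(\schwarzoutg*-\eukoutg*)-\frac12(\outg*-\eukoutg*)$: the Schwarzschild piece yields an explicit ADM-mass-type quantity that is bounded uniformly in $R$, precisely as in the proof of Proposition~\ref{Ric(nu,nu)-integrals}, while the perturbation piece is $o(1)$ as $R\to\infty$. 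For the bulk term, the fact that $\outmomden*$ is itself the $\outg$-divergence of $\outtr\,\outzFund*\,\outg*-\outzFund*$ allows one to re-express $\outmomden^j(\outx_j-\centerz_j)$ via a further integration by parts; modulo the Bianchi identity this reduces the integrand to a combination of $\outric*$- and $\outsc$-type curvature contractions with $\outx-\centerz\,$, and the Ricci decay $\vert\outric*\vert\le C\rad^{-2-\ve}$ together with scalar-curvature integrability render the bulk integral absolutely convergent and bounded by $C\Aradius^{1-\ve}$.

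\textbf{The main obstacle.} The delicate point is the bulk bound: the naive pointwise estimate $\vert\outdiv V\vert\le C\rad^{-\frac12-\ve}$ yields a volume integral of order $R^{\frac52-\ve}$ as $R\to\infty$, which is unusable. The required $\Aradius^{1-\ve}$ bound can only be recovered by exploiting the hidden divergence structure of $\outmomden*$, together with the Bianchi-type identities already used in Appendix~\ref{ricci_integrals_and_the_mass} to establish convergence of the ADM-mass integral without any symmetry assumption on $\outg*$. Once this algebraic bookkeeping is carried out, the three terms in the integrand of $I_i(\M)$ are seen to organise exactly into $\outg(V,\nu)$ plus controllable errors, and the claimed bound follows.
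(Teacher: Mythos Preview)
Your outline has the right overall shape---reduce to a flux integral, apply the divergence theorem, and show the bulk integrand decays fast enough---but the execution contains a genuine gap at the two decisive points. First, your vector field $V^k=(\outtr\,\outzFund*\,\outg^{kj}-\outzFund^{kj})(\outx_j-\centerz_j)$ carries no free index $i$; its flux $\outg(V,\nu)$ is (to leading order) $\Aradius\,\tr\outzFund*$, not the combination $\outtr\,\outzFund*\,\nu_i-\outzFund(\nu,\outsymbol e_i)$ you claim, so it cannot match the integrand of $I_i(\M)$. The paper instead rewrites the integrand (after replacing $\outg$-objects by Euclidean ones, which costs only $O(\Aradius^{-\outve})$) as the flux of the vector field $W^k=(\eukoutdiv\,\momentum*)^k\,\outx_i-\momentum^{k}{}_i$ for fixed $i$, where $\momentum*:=\eukouttr\,\outzFund*\,\eukoutg*-\outzFund*$; note that $W$ depends on $i$.

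Second, and more importantly, you correctly diagnose that the naive pointwise bound on the bulk integrand is useless, but your remedy---``a further integration by parts together with the Bianchi identity''---is not what works here. The Bianchi identity concerns the $\outg$-divergence of the Einstein tensor and is used elsewhere in the appendix, but it says nothing about the Euclidean double divergence of $\momentum*$. The actual mechanism is an algebraic identity coming from the coordinate expression of the Ricci tensor: from $2\,\outric_{ij}=\partial_j(\eukoutdiv\,\outg*)_i+\partial_i(\eukoutdiv\,\outg*)_j-\eukoutHess_{ij}(\eukouttr\,\outg*)-\eukoutlaplace\,\outg_{ij}+O(\rad^{-3-\outve})$ one obtains, after tracing and using $2\,\outzFund*=\schwarzoutg*-\outg*$, the crucial pointwise estimate $\vert\eukoutdiv(\eukoutdiv\,\momentum*)\vert\le\vert\outsc\vert+C\rad^{-3-\outve}$. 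Since $\eukoutdiv\,W=\eukoutdiv(\eukoutdiv\,\momentum*)\,\outx_i$ (the cross-terms cancel exactly), the bulk integral is then bounded by $\int_r^R\int_{\sphere^2_s}\vert\outsc\vert\cdot s\,\d\eukmug\,\d s\le C\,R^{1-\outve}$, which is what you need. Without this identity your argument does not close; the ``Bianchi-type'' bookkeeping you allude to will not produce it.
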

Due to the assumptions on $\outzFund$, i.\,e.\ the ones on $\outg$, and $\M$, we can replace $\Aradius$ by $\rad_0$ with $\rad_0(\outx):=\vert\outx-\centerz\,\vert$ and $\outzFund(\nu,\outsymbol e_i-\nu_i\nu)$ by $\rradius\,\outzFund(\nu,\levi*\nu_i)$ in the definition of $I_i$ and still get the same result. Equally, we can replace $\troutzFund$ by $\trtr\zFund\outzFund$.
\begin{proof}
Let us begin by noting
$\vert\outtr\,\outzFund* - \eukouttr\,\outzFund*\vert \le \nicefrac C{\rad^{1+\outve}}$
and that the corresponding inequalities hold for the derivatives (up to the second order). Thus, we can replace $\outmomden$ by $\outdiv(\eukouttr\,\outzFund*\,\eukoutg*-\outzFund*)$. As the assumptions on the metric imply $\vert\outlevi_\relax\vert\le\nicefrac C{\rad^{\frac32+\outve}}$, we can replace $\outmomden$ by $\eukoutdiv(\eukouttr\,\outzFund*\,\eukoutg*-\outzFund*)$, too. Further recalling that $\outtr\,\outzFund*=\outzFund*(\nu,\nu)+\tr*\outzFund$, it is sufficient to prove
\[ \vert\int_{\M} \eukoutdiv\,\momentum*(\nu)\,\outx_i - \momentum*(\nu,\outsymbol e_i) \d\eukmug\vert \le C\,\Aradius^{1-\outve}, \]
where $\eukmug$ denotes the surface measure with respect to the Euclidean metric $\eukoutg*$ and $\momentum*:=\eukouttr\,\outzFund*\,\eukoutg*-\outzFund*$. Using the divergence theorem, we see that for any $r>0$ and $R:=\max_{\M}\rad$
\begin{align*}\hspace{2.5em}&\hspace{-2.5em}
 \vert\int_{\M} \eukoutdiv\,\momentum*(\nu)\,\outx_i - \momentum*(\nu,\outsymbol e_i) \d\eukmug - \int_{\sphere^2_r(0)}\eukoutdiv\,\momentum*(\nu)\,\outx_i - \momentum*(\nu,\outsymbol e_i) \d\eukmug \vert \\
	\le{}& \vert\int_r^R \int_{\sphere^2_s(0)} \eukoutdiv(\eukoutdiv\,\momentum*\,\outx_i - \momentum*_i) \d\eukmug \d s \vert
	\le \int_r^R \vert\int_{\sphere^2_s(0)} \eukoutdiv(\eukoutdiv\,\momentum*)\,\outx_i \d\eukmug \vert\d s
\end{align*}
holds. Fixing $r\in\interval{R_0}R$ arbitrary, we conclude all in all that
\[ \vert\int_{\M} \outmomden(\nu) + \frac{\zFund*(\nu,\outsymbol e_i-\nu_i\,\nu) - \troutzFund\,\nu_i}\Aradius \d\mug\vert \le \int_r^R \vert\int_{\sphere^2_s(0)} \eukoutdiv(\eukoutdiv\,\momentum*)\,\frac{\outx_i}\Aradius \d\eukmug \vert\d s + \frac C{\Aradius^{\outve}}. \]
Thus, the claim is proven if $\vert\eukoutdiv(\eukoutdiv\,\momentum*)\vert\le\nicefrac C{\rad^{3+\outve}}$. 

In coordinates, we know
\[ \vert\outric_{ij} - \partial[\outx]_k@{\outlevi_{ij}^k} + \partial[\outx]_j@{\outlevi_{ki}^k} \vert = \vert\outlevi_{kl}^k\,\outlevi_{ji}^l - \outlevi_{ji}^l\,\outlevi_{ki}^l\vert \le \frac C{\rad^{3+\outve}} \]
and conclude by plugging in the characterization of $\outlevi_\relax$ by the derivatives of the metric, that
\[ \vert 2\outric_{ij} - \partial[\outx]_j@{(\eukoutdiv\outg)_i} - \partial[\outx]_i@{(\eukoutdiv\outg)_j} + \eukoutHess_{ij}(\eukouttr\,\outg*) + \eukoutlaplace\outg_{ij} \vert \le \frac C{\rad^{3+\outve}}. \]
Using $\eukoutdiv\eukoutg*\equiv0$, this implies by tracing
\[ \vert\eukoutdiv(\eukoutdiv\,\momentum*)\vert \le \vert\outsc\vert + \frac C{\rad^{3+\outve}} \le \frac C{\rad^{3+\outve}}. \]
As explained above, this proves the claim.\pagebreak[3]
\end{proof}

\begin{proposition}[Estimating the center (implicitly) -- strong version]\label{Estimating_the_center__implicit__strong_version}
Let $(\outM,\outg*,\outx)$ be a three-dimensional $\Ck^2_{\frac12+\ve}$-asymptotically flat Riemannian manifold satisfying the $\Ck^2_{1+\outve}$-asymptotic Regge-Teitel\-boim conditions and define $I_i(M)$ for any closed hypersurface $M\hookrightarrow\outM\setminus\outsymbol L$ by
\[ I_i(M) := \int_{\M} \rad\;\outmomden(\nu)\,\nu_i + \zFund*(\nu,\outsymbol e_i-\nu_i\,\nu) - \troutzFund\,\nu_i \d\mug, \]
where $2\outzFund*=\schwarzoutg*-\outg*$ and $\outmomden:=\outdiv(\outtr\,\outzFund*\,\outg*-\outzFund*)$ are artificial quantities and $\nu$, and $\mu$ denote the outer unit normal and the surfaces measure of $M\hookrightarrow(\outM,\outg*)$, respectively. Assume that $(\M,\g*)\hookrightarrow(\outM,\outg*)$ satisfies the assumptions of Proposition \ref{Regularity_of_surfaces_in_asymptotically_flat_spaces} with $\Aradius>\Aradius_0$ and that $\M$ is {\normalfont(}asymptotically{\normalfont)} symmetric, i.\,e.\ there is a function $f\in\Hk^2(\sphere^2_\Aradius(\centerz\,))$ such that $\M:=\graph f$ and
\[ \vert\centerz\,\vert \le \c_1, \qquad \Vert f - f\circ\varphi \Vert_{\Hk^2(\sphere^2_\Aradius(\centerz\,))} \le C\,\Aradius^{1-\outve}, \]
where $\varphi:\sphere^2_\Aradius(\centerz\,)\to\sphere^2_\Aradius(\centerz\,):x+\centerz\mapsto{-}x+\centerz$\pagebreak[1]. There is a constant $C=\Cof[\outve][\oc][\eta][\c_1][\c_2]$ such that
\[ \Big| I_i(\M) - I_i\big(\sphere^2_R(\centerz\,)\big)\Big| \le \frac C{\Aradius^{\outve}} \qquad\forall\,R>\Aradius>R_0,\;i\in\{1,2,3\}. \]
Furthermore, the ADM-center of mass $\outcenterz_{\text{ADM}}$ exists if and only if $(I_i(\sphere^2_r(0)))_{i=1}^3$ converges for $r\to\infty$ and then these limits coincide. If $(\outM,\outg*,\outx)$ satisfies the $\Ck^2_{\frac32+\outve}$-asymptotic Regge-Teitel\-boim conditions, then $I_i(\sphere^2_r(0))$ converges with $r\to\infty$ for any $i\in\{1,2,3\}$.
\end{proposition}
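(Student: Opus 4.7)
The approach extends the proof of Lemma~\ref{Estimating_the_center__implicit} by exploiting the additional symmetry provided by the Regge-Teitel\-boim conditions on $\outg$ together with the assumed asymptotic $\outx\mapsto-\outx$-symmetry of $\M$ around $\centerz$, in order to obtain control of the difference of boundary integrals that is uniform in $R$.

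\textbf{Step 1 (Bulk reformulation).} I would first rewrite the difference $I_i(\M) - I_i(\sphere^2_R(\centerz\,))$ as a bulk integral over the Euclidean region $U\subset\R^3$ enclosed between $\M$ and $\sphere^2_R(\centerz\,)$, mirroring the proof of Lemma~\ref{Estimating_the_center__implicit}: after replacing the metric divergence, surface measure and normal by their Euclidean counterparts at a cost of $O(\Aradius^{-\outve})$, and substituting $\rad\,\nu_i\to\outx_i-\centerz_i$ using $|\centerz\,|\le\c_1$ together with the graph decomposition of $\M$, the Euclidean divergence theorem applied to the vector field $V^j := (\outx_i-\centerz_i)\,(\eukoutdiv\momentum)^j - \momentum^{ij}$ (whose two $(\eukoutdiv\momentum)^i$-contributions in $\eukoutdiv V$ exactly cancel) reduces the claim to proving
\[ \Bigl|\int_U (\outx_i-\centerz_i)\,\eukoutdiv\eukoutdiv\momentum\,\d\eukmug\Bigr| \le \frac C{\Aradius^{\outve}}\qquad\forall\,R>\Aradius, \]
together with analogous bulk bounds for the error terms produced by these substitutions. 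By the previous lemma, $\eukoutdiv\eukoutdiv\momentum = \tfrac12\outsc + E$ with $|E|\le C\rad^{-3-\outve}$.

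\textbf{Step 2 (Symmetric core via parity).} The pointwise bound $|(\outx_i-\centerz_i)\outsc|\le C\rad^{-2-\outve}$ is not integrable at infinity, so a uniform-in-$R$ estimate requires cancellation. I would set $V := U\cap(-U)$, the largest $\outx\mapsto-\outx$-invariant subset of $U$, and decompose $\outsc = \outsc_s + \outsc_a$ into its symmetric and antisymmetric parts under $\outx\mapsto-\outx$. On $V$: the piece $\int_V \outsc_s\,\outx_i\,\d\eukmug$ vanishes by parity, and $\int_V \outsc_s\,\centerz_i\,\d\eukmug$ is bounded by $|\centerz|\cdot\Vert\outsc\Vert_{\Lp^1(\outM\setminus B_\Aradius)}\le C\Aradius^{-\outve}$ via the integrability of $\outsc$. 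The delicate term $\int_V\outsc_a\,\outx_i\,\d\eukmug$ has a symmetric integrand and does not self-cancel by parity; controlling it requires coupling $\outsc_a$ with the antisymmetric parts of the error $E$ and of the quadratic Christoffel terms appearing in the expansion of $\eukoutdiv\eukoutdiv\momentum-\tfrac12\outsc$, using the Regge-Teitel\-boim bounds (antisymmetric parts of $\outric$ at rate $\rad^{-3-\outve}$, of $\outsc$ at rate $\rad^{-7/2-\outve}$, and symmetric part of $\outlevi$ at rate $\rad^{-2-\outve}$) to extract the additional cancellation that brings this contribution down to $O(\Aradius^{-\outve})$.

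\textbf{Step 3 (Boundary layers).} The complement $U\setminus V$ is contained in a thin layer around $\M$ and a thin layer around $\sphere^2_R(\centerz\,)$. Using $|\centerz|\le\c_1$ together with the graph decomposition $\M = \graph f$ and the symmetry deficit bound $\Vert f - f\circ\varphi\Vert_{\Hk^2(\sphere^2_\Aradius(\centerz\,))}\le C\Aradius^{1-\outve}$, the inner layer has volume $O\bigl(\Aradius^{2}\cdot(|\centerz|+\Aradius^{-\outve})\bigr)$ and the outer layer has volume $O(R^2\cdot|\centerz|)$. Combined with the integrand bound $O(\rad^{-2-\outve})$, this yields an inner contribution of order $\Aradius^{-\outve}$ and an outer contribution of order $R^{-\outve}$ that vanishes as $R\to\infty$, completing the uniform estimate.

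\textbf{Step 4 (ADM-center).} Applying the just-proved estimate with $\M = \sphere^2_r(\centerz\,)$ (for which the graph deficit is zero and $V$ is the annular shell $\{r\le|\outx|\le R\}$) yields the Cauchy criterion for $(I_i(\sphere^2_r(\centerz\,)))_r$. A further divergence-theorem step compares this sequence to $(I_i(\sphere^2_r(0)))_r$ at cost $O(r^{-\outve})$ inherited from Proposition~\ref{Ric(nu,nu)-integrals}. Writing out $I_i(\sphere^2_r(0))$ via $\rad\,\nu = \outx$ on a sphere centered at the origin and integrating the $\outmomden$-term by parts reproduces, up to normalization, the integrand in the Regge-Teitel\-boim/Beig-\'O~Murchadha definition of $\outcenterz_{\text{ADM}}$, proving the Cauchy equivalence. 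Under the stronger $\Ck^2_{3/2+\outve}$-Regge-Teitel\-boim conditions, the sharper bound $|\outsc_a|\le C\rad^{-9/2-\outve}$ makes the Step~2 bulk integrand absolutely integrable on $\R^3\setminus B_\Aradius(0)$, so $\lim_{r\to\infty} I_i(\sphere^2_r(0))$ exists unconditionally.

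The main obstacle will be the cancellation required in Step~2 to handle $\int_V\outsc_a\,\outx_i\,\d\eukmug$, which is not absolutely convergent under only $\Ck^2_{1+\outve}$-Regge-Teitel\-boim: the proof must couple this integral with the antisymmetric parts of $\outric$ and of the quadratic Christoffel contributions in $\eukoutdiv\eukoutdiv\momentum - \tfrac12\outsc$ so that their combined symmetry structure produces the necessary extra decay. The asymptotic $\outx\mapsto-\outx$-symmetry of $\M$, encoded in the graph deficit bound on $f$, is equally essential to ensure that $V$ captures enough of $U$ for this argument to go through.
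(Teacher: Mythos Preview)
Your strategy is essentially the paper's: reduce to a bulk integral via the divergence theorem, then exploit parity together with the Regge--Teitelboim decay of the odd parts. The paper executes this more directly. After the same Euclidean replacements you describe in Step~1, it simply records the pointwise bound
\[
\bigl|\eukoutdiv(\eukoutdiv\,\momentum)(\outx) - \eukoutdiv(\eukoutdiv\,\momentum)({-}\outx)\bigr| \le \frac C{\rad^{3+q+\outve}}
\]
(as well as the analogous antisymmetric bound for $\outtr\,\outzFund-\eukouttr\,\outzFund$), obtained exactly as in Lemma~\ref{Estimating_the_center__implicit} from the identity $\eukoutdiv\eukoutdiv\momentum = -\tfrac12\outsc + (\text{quadratic in }\outlevi) + (\text{Schwarzschild})$: the Schwarzschild piece is even, the odd part of $\outsc$ is controlled by the Regge--Teitelboim hypothesis, and the odd part of $\outlevi\cdot\outlevi$ is (even part of $\outlevi$)$\times$(odd part of $\outlevi$), again controlled by Regge--Teitelboim. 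With this pointwise bound in hand, the paper slices the bulk radially over coordinate spheres $\sphere^2_s(0)$ centred at the origin: on each such sphere $\outx_i$ is exactly odd, so the symmetric part of $\eukoutdiv\eukoutdiv\momentum$ drops out and only the antisymmetric part survives, giving the uniform estimate immediately.

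Your decomposition $U = V \cup (U\setminus V)$ with $V = U\cap(-U)$ is an equivalent but more laborious way of isolating the same cancellation: the symmetric core $V$ is, up to your Step~3 boundary layers, precisely the annulus over which the paper integrates. Where you go astray is in Step~2, when you say that $\int_V\outsc_a\,\outx_i\,\d\eukmug$ ``requires coupling $\outsc_a$ with the antisymmetric parts of the error $E$ and of the quadratic Christoffel terms'' to extract additional cancellation. No such coupling is needed or available: the antisymmetric part of the full quantity $\eukoutdiv\eukoutdiv\momentum$ is bounded \emph{pointwise} by the Regge--Teitelboim hypotheses (applied separately to $\outsc$ and to the quadratic term), and this pointwise bound is what makes the integral small. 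There is no algebraic cancellation between $\outsc_a$ and the error terms, only the common parity structure. Once you drop the coupling idea and use the direct pointwise bound, your argument collapses into the paper's.
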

Due to the (symmetry) assumptions on $\outzFund$, i.\,e.\ the ones on $\outg$, and $\M$, we can replace $\Hradius$ by $\rradius:=\min_M\rad$ and $\outzFund(\nu,\outsymbol e_i-\nu_i\nu)$ by $\rradius\,\outzFund(\nu,\levi*\nu_i)$ in the definition of $I_i$ and still get the same (asymptotic) result. Equally, we can replace $\troutzFund$ by $\rradius\,\trtr\zFund\outzFund$.
\begin{proof}
Assume that the $\Ck^2_{q+\outve}$-Regge-Teitel\-boim conditions are satisfied for $q=1$ or $q=\frac32$. Calculating as in the proof of \ref{Estimating_the_center__implicit}, we see
\[ \vert\eukoutdiv(\eukoutdiv\,\momentum*) \vert \le \frac C{\rad^{3+\outve}}, \qquad
		\vert\eukoutdiv(\eukoutdiv\,\momentum*)(\outx) - \eukoutdiv(\eukoutdiv\,\momentum*)({-}\outx) \vert \le \frac C{\rad^{3+q+\outve}}. \]
Equally, we see that
\[ \vert (\outtr\,\outzFund-\eukouttr\,\outzFund)(\outx) - (\outtr\,\outzFund-\eukouttr\,\outzFund)({-}\outx) \vert \le \frac C{\rad^{1+q+\outve}}. \]
Thus, the same arguments as in the proof of \ref{Estimating_the_center__implicit} result in
\begin{align*}\hspace{2.5em}&\hspace{-2.5em}
 \vert\int_{\M} \eukoutdiv\,\momentum*(\nu)\,\outx_i - \momentum*(\nu,\outsymbol e_i) \d\eukmug - \int_{\sphere^2_r(0)}\eukoutdiv\,\momentum*(\nu)\,\outx_i - \momentum*(\nu,\outsymbol e_i) \d\eukmug \vert \\
	\le{}& \int_r^R \vert\int_{\sphere^2_s(0)} \eukoutdiv(\eukoutdiv\,\momentum*)\,\outx_i \d\eukmug \vert\d s \le \frac C{\rad^{q+\outve}},
\end{align*}
where we replaced $q$ by $1$ if $M\neq\sphere^2_\Aradius(0)$. This implies the claims except for the comparability with the ADM-center of mass, which we get by writing the integrand in coordinates.
\end{proof}

\section{Ordinary differential (asymptotic) equations}
We used that a function, which satisfies an ordinary differential equation asymptotically, i.\,e.
\[ \vert \partial[t]@{f(t)} + \frac{\delta\,f(t)}t - \frac \eta t \vert \le \frac\c{t^\ve} \]
is asymptotic to the corresponding solution, i.\,e.
\[ \vert f(t) - \frac{\eta'}{t^\delta} - \frac\eta\delta \vert \le \frac C{t^{\ve-1}} \]
for some constant $C=\Cof[\c][\delta]$ and $\eta'=\Cof{\eta'}[f]$. This is intuitively clear and the proof straightforward, but we prove it never the less for the sake of completeness.
\begin{proposition}[Ordinary differential (asymptotic) equations]\label{Ordinary_differential__asymptotic__equations}
Let $\delta\in\interval0\infty$, $\ve\in\interval*0\infty$, $t_0\in\R$, $P\in\R$ be arbitrary constants and $h\in\Lp^1(\interval*{t_0}\infty)$ be an arbitrary integrable function. If a differentiable function $f:\interval{t_0}\infty\to\R$ satisfies
\[ \vert f'(t) + \frac{\delta\,f(t)}t - \frac \eta t \vert  \le \frac{h(t)}{t^\ve}, \]
then there is a number $\eta'\in\R$ {\normalfont(}depending on $f${\normalfont)} such that
\[ \left\{\quad\begin{aligned} \vert f(t) - \frac\eta\delta - \frac{\eta'}{t^\delta} \vert \le{}& \frac 1{t^\ve} \int_t^\infty h(s) \d s &:\ \ve\ge\delta, \\
									\vert f(t) - \frac\eta\delta \vert \le{}& \frac 1{t^\ve} \int_t^\infty h(s) \d s &:\ \ve\le\delta. \end{aligned}\right. \]
\end{proposition}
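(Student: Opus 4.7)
My plan is to reduce both cases to a single integrating-factor identity, following the classical argument for linear first-order equations but applied to the differential inequality.

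Set $g(t):=f(t)-\eta/\delta$, so the hypothesis reads $|g'(t)+\delta g(t)/t|\le h(t)/t^\ve$. Multiplying by the integrating factor $t^\delta$ and recognizing the left-hand side as a total derivative gives $F(t):=t^\delta g(t)$ with
\[ |F'(t)| \;=\; t^\delta\bigl|g'(t)+\delta g(t)/t\bigr| \;\le\; t^{\delta-\ve}\,h(t). \]
Everything then reduces to integrating this inequality for $F'$ and dividing the result by $t^\delta$ to recover a bound on $g$.

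In the case $\ve\ge\delta$ the weight $s^{\delta-\ve}$ is non-increasing, so for $t_1\le t_2$
\[ |F(t_2)-F(t_1)| \;\le\; \int_{t_1}^{t_2} s^{\delta-\ve}\,h(s)\,ds \;\le\; t_1^{\delta-\ve}\,\int_{t_1}^\infty h(s)\,ds. \]
Since $h\in\Lp^1([t_0,\infty))$, the right-hand side vanishes as $t_1\to\infty$, so $F$ is Cauchy at infinity and $\eta':=\lim_{t\to\infty}F(t)$ exists. Sending $t_2\to\infty$ in the previous display gives $|F(t)-\eta'|\le t^{\delta-\ve}\int_t^\infty h(s)\,ds$, and dividing by $t^\delta$ yields the first inequality of the proposition.

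In the case $\ve\le\delta$ the same identity for $F'$ remains valid, but the weight $s^{\delta-\ve}$ no longer decays, so one cannot expect $F$ to be Cauchy at infinity. Instead I integrate $F'$ from a fixed initial time, use integrability of $h$ to bound the total variation, and then divide by $t^\delta$; any constant of integration contributes only a term of order $1/t^\delta$, which is absorbed into the $1/t^\ve$ error since $\delta\ge\ve$. The main (mild) obstacle is the sharp constant bookkeeping near the threshold $\ve=\delta$ so that the two formulations agree; the remainder is routine, driven entirely by the integrating-factor identity combined with the hypothesis $h\in\Lp^1$.
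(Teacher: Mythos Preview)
Your treatment of the case $\ve\ge\delta$ is correct and coincides with the paper's: both multiply by the integrating factor $t^\delta$, observe that $F(t)=t^\delta g(t)$ has $|F'(t)|\le t^{\delta-\ve}h(t)$, conclude that $F$ is Cauchy at infinity, and call the limit $\eta'$.

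For $\ve\le\delta$ your sketch has a real gap. Integrating $|F'(s)|\le s^{\delta-\ve}h(s)$ from a fixed $t_0$ to $t$ and bounding $s^{\delta-\ve}\le t^{\delta-\ve}$ yields only
\[
 |g(t)| \;\le\; \frac{|F(t_0)|}{t^\delta} \;+\; \frac{1}{t^\ve}\int_{t_0}^{t} h(s)\,ds \;\le\; \frac{|F(t_0)|}{t^\delta} \;+\; \frac{\Vert h\Vert_{\Lp^1}}{t^\ve}.
\]
Neither term can be ``absorbed into the $1/t^\ve$ error'' as you assert, because the target bound is $t^{-\ve}\int_t^\infty h(s)\,ds$, which is $o(t^{-\ve})$ rather than $O(t^{-\ve})$. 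In particular the initial-data contribution $|F(t_0)|/t^\delta$ is not dominated by $t^{-\ve}\int_t^\infty h$: take $h\equiv 0$ and $g(t)=A/t^\delta$ to see that the integrating-factor estimate produces an unavoidable $A/t^\delta$ term while the right-hand side of the claimed inequality vanishes identically. So as written your argument gives only $|g(t)|\le C\,t^{-\ve}$ with $C$ depending on $f(t_0)$ and $\Vert h\Vert_{\Lp^1}$, not the stated conclusion; the ``mild bookkeeping'' you anticipate is in fact the whole difficulty.

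The paper does not push the integrating factor further in this regime. Instead it passes to $|f|$, which satisfies the same differential inequality almost everywhere, uses the sign of the damping term to argue that $|f|_\infty:=\lim_{t\to\infty}|f(t)|$ exists, and then rules out $|f|_\infty>0$ by contradiction: if $|f|_\infty>0$, the term $\delta\,|f|_\infty/t$ forces $f$ to decrease by at least $\delta\,|f|_\infty\int_t^\infty s^{-1}\,ds=\infty$, which is incompatible with convergence. That logarithmic-divergence step genuinely exploits the structure of the damping and is not a consequence of the integrating-factor identity alone.
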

\begin{proof}
Comparing with $F(t):=f(t)-\nicefrac\eta\delta$, we can without loss of generality assume that $\eta=0$.

First, we assume $\ve\ge\delta$. Integration with $t_0\le t<T$ implies
\[ \vert g(t) - g(T)\vert \le \int_t^T\frac{h(s)}{s^{\ve-\delta}} \d s \le \frac1{t^{\ve-\delta}} \int_t^T h(s) \d s \le \frac1{t^{\ve-\delta}} \int_t^\infty h(s) \d s. \]
Thus, the limit $g_\infty:=\lim_{t\to\infty}g(t)$ exists and satisfies
\[ \vert g_\infty -g(t) \vert \le \int_t^\infty\frac{h(s)}{s^{\ve-\delta}} \d s \]
which implies the claim for $\ve\ge\delta$.\smallskip

Now, let us assume $\ve\le\delta$. We know
\[ \vert\partial[t]@{|f(t)|} + \frac{\delta\,|f(t)|}t\vert \le \frac{h(t)}{t^\ve} \]
for almost every $t\ge t_0$ and therefore conclude
\[ \big||f(t)|-|f(T)|\big| \le \int_t^T {-}\frac\delta s\,\vert f(s)\vert + \frac{h(s)}{s^\ve} \d s \le t^{-\ve}\int_t^\infty h(s) \d s. \]
In particular, the limit $|f|_\infty:=\lim_{t\to\infty}|f(t)|$ exists and satisfies
\[ \big||f|_\infty - |f(t)|\big| \le t^{-\ve}\int_T^\infty h(s) \d s. \]
Thus, we only have to prove $|f|_\infty=0$ and therefore assume $|f|_\infty>0$. As $f$ is continuous, this implies that (without loss of generality) $f(t)>0$ for every $t>T$ for some sufficiently large $T$. In particular, this implies
$\vert f_\infty - f(t) \vert \le t^{-\ve}\int_t^\infty h(s)\d s$
for $f_\infty:=|f|_\infty$. Again calculating the derivative, we see
\begin{align*}
 \vert\partial[t]@{f(t)} + \frac\delta t\,f_\infty\vert
	\le{}& \vert\partial[t]@{f(t)} + \frac\delta t\,f(t)\vert + \delta t^{{-}1-\ve}\,\int_t^\infty h(s) \ds \\
	\le{}& t^{-\ve}(h(t) + \frac\delta t\,\int_t^\infty h(s) \ds).
\end{align*}
Hence, we conclude by integration that
\begin{align*}
 \infty >{}& \vert f_\infty - f(t)\vert
	\ge \int_t^\infty \delta\,f_\infty\,s^{-1} - s^{-\ve}(h(s) + \frac\delta s\,\int_s^\infty h(u) \d u) \ds \\
	\ge{}& \delta\,f_\infty\, \int_t^\infty s^{-1} \d s - c = \infty
\end{align*}
holds for some $c<\infty$. By this contradiction, we get $f_\infty=0$ proving the claim.
\end{proof}
\bibliography{bib}
\makeatletter%
\def\bibindent{10em}%
\let\old@biblabel\@biblabel%
\def\@biblabel#1{\old@biblabel{#1}\kern\bibindent}%
\makeatother%
\bibliographystyle{alpha}\vfill%
\end{document}